\documentclass[11pt]{article}

\usepackage{amsmath,amssymb,amsthm}
\usepackage{bbm}
\usepackage{booktabs}

\newcommand{\R}{ \mathbb{R}}          
\newcommand{\Z}{\mathbb{Z}}          
\def\V#1{{\boldsymbol{#1}}}           
\def\M#1{{\bf{#1}}}                   
\newcommand{\Tr}{\mathsf{T}}          

\def\Spc#1{{\mathcal{#1}}}            
\def\Op#1{{\mathrm{#1}}}              
\def\ee{\mathrm{e}}                   
\def\jj{\mathrm{i}}                   
\newcommand{\dint}{{\rm d}}           
\def\Indic{\mathbbm{1}} 

\newcommand{\embedD}{\overset{d}{\hookrightarrow}} 
\newcommand{\embedC}{\overset{}{\hookrightarrow}} 
\newcommand{\eqdef}{:=}
\newcommand{\Identity}{\mathrm{Id}}

\theoremstyle{plain}
\newtheorem{theorem}{Theorem}[section]
\newtheorem{lemma}[theorem]{Lemma}
\newtheorem{corollary}[theorem]{Corollary}
\newtheorem{proposition}[theorem]{Proposition}
\theoremstyle{definition}
\newtheorem{definition}[theorem]{Definition}
\newtheorem{example}[theorem]{Example}
\theoremstyle{remark}
\newtheorem{remark}[theorem]{Remark}

\usepackage[hidelinks]{hyperref}
\usepackage{clchanges}   

\begin{document}

\title{Splines and Operators:\\ Part I---Admissibility and Native Spaces}
\author{Michael Unser\thanks{Biomedical Imaging Group, \'Ecole polytechnique
f\'ed\'erale de Lausanne (EPFL), CH-1015 Lausanne, Switzerland
(michael.unser@epfl.ch).}}
\date{}
\maketitle

\begin{abstract}
The variational theory of splines requires a native space: a Hilbert space over which
the regularization energy $\|\Op L f\|_{L_2}^2$ is well defined and the minimum-energy
interpolation problem admits a spline solution. In this paper, we characterize the operators
$\Op L$ that enable the construction of such spaces. Our framework is based on the completion
of a space $\Spc S$ of test functions in a controlled Gelfand-triple configuration
$\Spc S\embedD\Spc H\embedD\Spc S'$, which we call a Hilbert--Gelfand space. This departs from
the classical reproducing-kernel techniques. Our construction is substantially more general,
as it supports arbitrary linear operators and singular behaviors, and it recovers the
reproducing-kernel setting as a particular case. It is also complete and canonical: the
admissibility conditions on $\Op L$ are necessary and sufficient, and $\Op L$ determines its
native space as the unique Hilbert--Gelfand space on which it extends to a bounded surjection
onto $L_2$. We first show that Hilbert--Gelfand spaces are in
one-to-one correspondence with compatible Riesz pairs $(\Op A,\Op A^{-1})$ of positive-definite
operators $\Spc S\to\Spc S'$. We then extend this correspondence to conditional Riesz pairs,
where $\Op A$ is only conditionally positive-definite, which is the scenario that is relevant
to splines. The theory identifies the \emph{attainable null space} of the energy as the unique
admissible conditioning, realizes the abstract completions as concrete spaces of
distributions, and provides a verifiable admissibility condition in terms of a dual energy
functional. The framework is illustrated with the fractional Laplacian, which covers
polyharmonic and thin-plate splines, and with ordinary differential operators, which cover
exponential splines.
\end{abstract}

\pagebreak
\tableofcontents
\bigskip

\section{Introduction}
\label{Sec:Introduction}

Splines admit an elegant variational characterization
\cite{deBoor1966,Jerome1969,Prenter:1975,Schumaker2007}. Let us consider a finite set of data points $(x_m,f_m)$, $m=1,\dots,M$, and a linear operator
$\Op L$ whose role is to favor certain classes of data fits. The associated
interpolation problem is to find a function $f$ that minimizes the regularization energy
$|\Op L f|_{L_2}^2$
subject to the interpolation constraints $f(x_m)=f_m$. Its solution is a spline associated with the normal operator $\Op B=\Op L^\ast\Op L$.The choice
$\Op L=\Op D$ (the derivative) yields the piecewise-linear splines, while $\Op L=\Op D^m$
produces the classical polynomial splines of degree $2m-1$ \cite{Schoenberg1964}; the multidimensional counterpart with
the thin-plate energy $\|\Delta f\|_{L_2}^2$ leads to the thin-plate splines of Duchon \cite{Duchon1977}. For the
problem to be well posed, the minimization must be carried over a suitable Hilbert space
$\Spc H_{\Op L}$---the \emph{native space} of the energy \cite{Schaback1999,Schaback2000}---whose
specification turns out to be
the crux of the theory.

The goal of this paper is to provide a general and systematic operator-based mechanism for the
specification of the native spaces of splines. We impose two requirements on such a space. The
first
is that the null space of the regularization energy be finite-dimensional, which is
what confers to splines their characteristic structure and what makes the interpolation
problem solvable with finitely many parameters. The second
is that the space be rich
enough to enable the approximation of a broad class of functions; we secure this property by
requiring that the native space contain a nuclear space $\Spc S$ of test functions as a dense
subspace.

The prototypical illustration is the derivative operator $\Op L=\Op D$ on the real line, for
which the minimum-energy interpolation problem
\begin{align}
\label{Eq:Splinelinear}
s_{\mathrm{spline}}=\arg \min_{f \in \Spc H_{\Op D}(\R)} \|\Op D f \|_{L_2}
\ \mbox{ s.t. } f(x_m)=f_m,\ \ m=1, \dots, M
\end{align}
is known to be solved by a piecewise-linear spline; that is, by a curve that is composed of
linear segments that connect the points $(x_m,f_m)$, with constant (minimum-energy)
extrapolation beyond the end points. The generalization of this scheme is to substitute some
other admissible operator $\Op L$ for $\Op D$---for instance, $\Op L=\Op D^n$, which yields 
polynomial spline interpolants. This results in the concept of $\Op L$-splines where a given brand of splines
is attached to some operator $\Op L$ \cite{Schultz1967,Jerome1969}.  A specificity of splines is that the
``interesting'' operators all have a nontrivial finite-dimensional null space, and that the
resolution of \eqref{Eq:Splinelinear} calls for a generalized inverse
$\Op A=(\Op L^\ast\Op L)^{-1}:\Spc S\to\Spc S'$, which is typically obtained by Green's
function techniques. Concretely, this translates into a configuration where $\Op A$ is
$\V p$-conditionally positive-definite, with $\V p$ spanning the effective null space of
$\Op L$.

The classical construction of native spaces relies on reproducing-kernel techniques \cite{Aronszajn1950,deBoor1966,Mosamam2010} and on the
theory of (conditionally) positive-definite functions \cite{Micchelli1986,Madych1990c,Buhmann2003,Wendland2005,Ferreira2013,Iske2018}. The operators of interest, however, all
share a structural difficulty: their null space is nontrivial, so that the energy
$\|\Op Lf\|_{L_2}$ is only a seminorm. The completion of the space $\Spc S$ of test functions
with respect to this seminorm is then an abstract space of equivalence classes, which cannot
be identified with a space of distributions. The simplest instance of this obstruction is the
homogeneous Sobolev space
$\dot H^{1}(\R)=\overline{(\Spc S,\|\Op D\cdot\|_{L_2})}$, whose elements are functions modulo
constants \cite[Ch.~1]{Bahouri2011}.

In this work, we develop an operator-centric alternative that resolves this difficulty with
the help of completion techniques in a controlled Gelfand-triple configuration
$\Spc S\embedD\Spc H\embedD\Spc S'$, which we refer to as a Hilbert--Gelfand space. Our
program has two stages. We first establish a one-to-one correspondence between
Hilbert--Gelfand spaces and compatible Riesz pairs $(\Op A,\Op A^{-1})$ of positive-definite
operators $\Spc S\to\Spc S'$ (Theorem~\ref{Theo:HilbertPositiveDef} and
Corollary~\ref{Cor:AlgebraicInverse}). We then extend the correspondence to
$\V p$-conditional Riesz pairs (Definition~\ref{Def:Intrinsic}), where $\Op A$ is only
conditionally positive-definite; this is the scenario of splines, with $\V p$ a basis of the
null space of the energy. The abstract completions are realized as concrete spaces of
distributions (Proposition~\ref{Prop:Realization}), the realization is made explicit through
biorthogonal systems and oblique projectors (Theorem~\ref{Thm:Concretization}), and a
finite-rank augmentation converts the conditional pair into an unconditioned dual pair of
Hilbert--Gelfand spaces (Theorem~\ref{Thm:Augmentation}), which is the sought-after
native-space structure. Finally, we show that the construction is rigid on both sides: the
conditioning is dictated by the energy (Theorem~\ref{Thm:AttainableNull}) and the resulting space is
dictated by the operator (Corollary~\ref{Cor:PhiIndependence} and
Proposition~\ref{Prop:NativeUniqueness}). The only freedom that remains, the choice of the
biorthogonal family $\V\phi$, affects the complement of the null space but neither the set nor its
topology. For the example of the derivative, this yields a concrete space of functions
$\Spc H_{\Op D}$ whose quotient by the constants is isometrically isomorphic to the abstract factor
space $\dot H^1(\R)$, namely $\Spc H_{\Op D}/\R\cong\dot H^1(\R)$, which is an instance of the general
quotient isometry of Corollary~\ref{Cor:PhiIndependence}.

The proposed setting of a nuclear Gelfand pair $(\Spc S,\Spc S')$ is remarkably general. It
handles functions and objects that are defined over a variety of domains: the real line with
$\Spc S=\Spc S(\R)$, or $\Spc S=\Spc S(\R^d)$ for multidimensional functions; the
(multi-)integers with $\Spc S=\Spc S(\Z^d)$ for discrete signals; the torus with
$\Spc S=\Spc S(\mathbb{T}^d)$ for periodic functions; and the (hyper)sphere with
$\Spc S=\Spc S(\mathbb{S}^{d-1})$. It also accommodates the finite-dimensional setting
$(\Spc S,\Spc S')=(\R^N,\R^N)$, which is the familiar framework of linear algebra.

Our formulation is a substantial extension of the reproducing-kernel methods. Its scope is of
the same nature as the move from ordinary functions to generalized functions, and its power
stems from the underlying nuclear Gelfand pair, which supports arbitrary linear operators and
singular behaviors---for instance, (fractional) derivatives in the sense of distributions. The
reproducing-kernel constructions are recovered as the particular case where the Schwartz
kernel of the inverse Riesz map is a classical kernel; that is, a continuous bivariate
function rather than a bivariate distribution. A very basic example that falls outside the
standard reproducing-kernel formulation, but that is covered by the present one, is
$L_2=\overline{(\Spc S,\|\cdot\|_{L_2})}$, the space of finite-energy functions. Our
framework also connects directly with regularization theory, through the specification of the
spline energy, and it makes the link with the Sobolev spaces explicit,
including with the subfamily of the homogeneous ones.

The conceptual novelty is the notion of \emph{attainable null space} of the regularization
energy (Definition~\ref{Def:AttainableNull}): the space of drifts that the energy annihilates
in the limit, which is generally much smaller than the distributional null space of the operator.
We show that it is the one and only admissible conditioning
(Theorem~\ref{Thm:AttainableNull}) and, dually, that the native space is the one and only
Hilbert--Gelfand space that carries the energy as a bounded surjection onto $L_2$
(Proposition~\ref{Prop:NativeUniqueness}), which is what makes the notation $\Spc H_{\Op L}$
legitimate. The first statement ties our construction to the classical notion of
conditional positive-definiteness that is central to approximation theory and to the theory of
radial basis functions \cite{Madych1990c,Barthelme2023}. The twist is that the conditioning is flexible here: it is not
restricted to polynomials, but is dictated by the operator itself.

The central question then becomes one of \emph{admissibility}: for which operators
$\Op L:\Spc S\to L_2$ does the normal operator $\Op B=\Op L^\ast\Op L$ participate in a
conditional Riesz pair? When $\Op B$ admits a two-sided Green inverse, the answer is
immediate: the pair exists if and only if the conditioning spans the attainable null space
(Corollary~\ref{Cor:GreenNormal}). In the general case, Theorem~\ref{Thm:NormalPair} answers
the question intrinsically, with three conditions (attainable annihilation of the null space,
Riesz representation of the conditioned test functions, and sharpness), and
Theorem~\ref{Thm:PracticalChecklist} converts them into computations on a dual energy
functional $Q$. For Fourier multipliers, all of the checks
reduce to the inspection of an explicit integral
(Proposition~\ref{Prop:MultiplierConstruction}), and the conditional inverse factorizes
through a Green-function inverse of $\Op L^\ast$ (Corollary~\ref{Cor:GreenFactor}), in exact
duality with the factorization of the normal operator.

Beyond the specification of native spaces, the operator-based formulation carries two further
benefits, which are the topics of two companion papers. It yields representer theorems for the
resolution of inverse problems that go beyond the traditional setting of reproducing-kernel
Hilbert spaces \cite{Unser2026c}, and it is in mathematical congruence with the theory of generalized
Gaussian stochastic processes \cite{Unser2026c}, in the spirit of the correspondence that is reviewed
in \cite{Kanagawa2018}.

The paper is organized as follows. In Section~\ref{Sec:HilbertGelfand}, we introduce nuclear
Gelfand pairs, Hilbert--Gelfand spaces, and compatible Riesz pairs, and we state their
equivalence in Theorem \ref{Theo:HilbertPositiveDef}, whose proof is given in the appendix. 
In Section~\ref{Sec:CondRieszPairs}, we
define the attainable null space and the conditional Riesz pairs, prove the realization and
concretization results, and illustrate the theory with the operator $\Op D^\ast\Op D$, which is
linked to the piecewise-linear splines. In Section~\ref{Sec:AdmissibleL}, we characterize the
admissible regularization operators, first through the Green inverse of the normal operator
and then intrinsically, and, in Section~\ref{Sec:OperatorConstructions}, we
develop the practical checklist together with the Fourier-multiplier construction and the
factorization of the conditional inverse, and we show that the native space is determined by
$\Op L$ up to an equivalence of norms (Proposition~\ref{Prop:NativeUniqueness}). In
Section~\ref{Sec:Examples}, we illustrate the proposed formalism with the fractional Laplacian, which covers the polyharmonic and thin-plate
settings, and with the ordinary differential operators $P(\Op D)$, which cover the exponential
splines.

\section{Hilbert--Gelfand Spaces}
\label{Sec:HilbertGelfand}

\begin{definition}[Nuclear Gelfand pair]
\label{Def:NuclearGelfandPair}
The couple $(\Spc S,\Spc S')$ that is formed by a topological vector space $\Spc S$ and
its topological dual $\Spc S'$ is called a \emph{nuclear Gelfand pair} if
\begin{enumerate}
\item $\Spc S$ is nuclear and Montel
\cite{Grothendieck1955,Pietsch1972,Treves2006};
\item there exists a continuous and dense embedding $\iota:\Spc S\to\Spc S'$ that is
canonical, in the sense that
$\langle\iota\varphi,\psi\rangle=\langle\iota\psi,\varphi\rangle$ for all
$\varphi,\psi\in\Spc S$.
\end{enumerate}
Unless stated otherwise, $\Spc S'$ is endowed with its weak-$\ast$ topology. This
choice is harmless because the Montel property ensures that a weak-$\ast$ convergent
sequence in $\Spc S'$ converges strongly as well.
\end{definition}

While the term \emph{nuclear} may be unfamiliar, most readers have already encountered two canonical examples of nuclear Gelfand pairs: $(\R^N,\R^N)$, which is the setting of linear algebra with $\R^N=(\R^N)'$ under the Euclidean inner product and $\iota=\Identity$; and $\big(\Spc S(\R),\Spc S'(\R)\big)$, where $\Spc S(\R)$ is the Schwartz space of smooth, rapidly decaying test functions on $\R$, and $\Spc S'(\R)$ is the space of tempered distributions. This class of spaces was identified by Grothendieck, who, in his groundbreaking PhD thesis under the supervision of L.\ Schwartz, extended the theory of locally convex topological vector spaces with the aim of recovering the formal simplicity of linear algebra \cite{Grothendieck1955}. His central discovery was that Schwartz's kernel theorem is not unique to $\Spc S(\R)$, but rather the hallmark of an entire class of spaces on which continuous linear operators admit a matrix-like representation. He also identified the subclass possessing the Montel property, which is the infinite-dimensional analogue of the Heine--Borel theorem and which we have incorporated into Definition~\ref{Def:NuclearGelfandPair}. The practical consequence is that the analysis of such spaces proceeds as in distribution theory. The principal requirement is to ensure the continuity of the underlying operators and functionals, while the reward is a considerably greater level of generality than that afforded by the classic Lebesgue framework.

The primary pair for spline theory is $\big(\Spc S(\R^d),\Spc S'(\R^d)\big)$, in which a test function
$\varphi\in\Spc S(\R^d)$ is identified with the tempered distribution
$\iota\varphi\in\Spc S'(\R^d)$ that acts on $\Spc S(\R^d)$ as
\begin{align}
\label{Eq:CanonicalEmbedding}
\iota\varphi: \psi \mapsto \langle \iota\varphi,\psi\rangle \eqdef \int_{\R^d} \varphi(\V x)\,\psi(\V x)\dint\V x,
\qquad \psi\in\Spc S(\R^d),
\end{align}
which yields the canonical embedding
$\iota\colon\Spc S(\R^d)\hookrightarrow\Spc S'(\R^d)$. 
For measurements confined to a
domain, this pair may be substituted by $\big(\Spc D(\Omega),\Spc D'(\Omega)\big)$ with
$\Omega\subseteq\R^d$ open. The spaces in both instances are nuclear and Montel
\cite{Treves2006}, and so are the variants that are mentioned in the introduction: the
Fourier-dual couple formed by the rapidly decreasing sequences $\Spc S(\Z^d)$ and the
smooth periodic functions $\Spc S(\mathbb{T}^d)=C^\infty(\mathbb{T}^d)$
\cite{Ruzhansky2010}; the space
$\Spc S(\mathbb{S}^{d-1})=C^\infty(\mathbb{S}^{d-1})$, which is characterized by the
faster-than-polynomial decay of the spherical-harmonic coefficients \cite{Seeley1966};
and, more generally, $C^\infty(M)$ for every compact $C^\infty$ manifold $M$
\cite{Treves2006}.

Given the duality between $\Spc S$ and $\Spc S'$, we use the symbol $\perp$ for the annihilator in
either direction \cite[Ch.~4]{Rudin1991}. For a subspace $\Spc N\subseteq\Spc S'$,
\begin{align*}
\Spc N^\perp\eqdef\{\varphi\in\Spc S:\ \langle\varphi,p\rangle=0\ \text{for all}\ p\in\Spc N\}
\ \subseteq\ \Spc S ,
\end{align*}
and symmetrically $E^\perp\subseteq\Spc S'$ for a subspace $E\subseteq\Spc S$. The convention is
unambiguous because the Montel property implies that $\Spc S$ is reflexive, so that $(\Spc S')'=\Spc S$. The bipolar theorem
\cite[Ch.~IV, \S1]{Schaefer-Wolff1999} (see also \cite[Ch.~3]{Voigt2020}) then reads
$(\Spc N^\perp)^\perp=\overline{\Spc N}^{\,w\ast}$, the weak-$\ast$ closure of $\Spc N$.

\begin{definition}[Hilbert--Gelfand space]
\label{Def:HilbertGelfand}
Let $(\Spc S,\Spc S')$ be a nuclear Gelfand pair. A Hilbert space $\Spc H$ is called an
\emph{$\Spc S$-Hilbert--Gelfand space} if
\begin{align*}
\Spc S \embedD \Spc H \embedD \Spc S',
\end{align*}
where the notation ``$\,\embedD\,$'' denotes a continuous and dense embedding.
\end{definition}

Such a configuration of spaces is commonly referred to as a \emph{rigged Hilbert space} or a \emph{Gelfand triple}; see, for example, \cite{Gelfand-Vilenkin1964,Treves2006}. Throughout this work, we use the term
\emph{Hilbert--Gelfand space} to emphasize the intermediate Hilbert space
$\Spc H$ together with the structural properties induced by its canonical
embeddings into the underlying nuclear Gelfand pair.

A major advantage of working with Hilbert--Gelfand spaces is the compatibility of the duality pairing for $(\Spc H', \Spc H)$ with that of $(\Spc S', \Spc S)$ (see Theorem \ref{Theo:HilbertPositiveDef} below). 
The most basic example of such a space is the finite-energy or baseline space $L_2$ induced by the duality product
for $(\Spc S,\Spc S')$ as
\begin{align}
L_2=\overline{(\Spc S,\|\cdot\|_{L_2})},\quad \|\varphi\|_{L_2}=\sqrt{\langle \varphi,\varphi\rangle},\quad \varphi \in \Spc S,
\end{align}
which is constructed by completion. By construction, $\Spc S$ is continuously and densely embedded in $L_2$, while its dual $L_2=(L_2)'$ is continuously and densely embedded in $\Spc S'$, so that $\Spc S \embedD L_2 \embedD \Spc S'$ (Hilbert--Gelfand property). 

The availability of a common (or, more precisely, mutually compatible) duality product
also facilitates the study of linear operators acting between Hilbert--Gelfand spaces.
Specifically, let $\Spc H_1$ and $\Spc H_2$ be two Hilbert--Gelfand spaces.
Then every bounded linear operator
$\Op T:\Spc H_1\to\Spc H_2$
admits a unique adjoint
$\Op T^\ast:\Spc H_2'\to\Spc H_1'$
characterized by
\begin{align}
\forall(g,f)\in\Spc H_2'\times\Spc H_1:\qquad
\langle \Op T^\ast g,f\rangle
=
\langle g,\Op T f\rangle,
\end{align}
with
$\|\Op T^\ast\|_{\Spc H_2'\to\Spc H_1'}
=
\|\Op T\|_{\Spc H_1\to\Spc H_2}$.
In practice, such operators often arise as the continuous extensions of primary operators
$\Op T:\Spc S\to\Spc S'$, with the corresponding adjoint
$\Op T^\ast:(\Spc S')'=\Spc S\to\Spc S'$.
Since the underlying Hilbert spaces are reflexive, the adjoint operation is involutive,
$(\Op T^\ast)^\ast=\Op T$,
and the generalized impulse response (Schwartz kernel) of $\Op T^\ast$ is simply the transpose of that of $\Op T$, exactly as in finite-dimensional linear algebra.

The key benefit of the framework is that the properties of such operators can be characterized entirely through their action on the common space of test functions; that is, through the restriction
$\Op T|_{\Spc S}:\Spc S\to\Spc S'$.

\begin{definition}[Positive-definite operator]
\label{Def:CondPositiveDef2}
Let $\Op A$ be a continuous operator $\Spc S \to \Spc S'$ and $\Spc N_{\V p}$ a
finite-dimensional subspace of $\Spc S'$ that is spanned by $\V p=(p_1,\dots,p_{N_0})$.
The operator $\Op A$ is said to be
\begin{itemize}
\item symmetric or self-adjoint if, for all $\varphi_1,\varphi_2 \in \Spc S$,
$\langle \Op A \varphi_1,\varphi_2\rangle=\langle \Op A \varphi_2,\varphi_1\rangle$;
\item positive-definite if, for any $\varphi \in \Spc S\backslash\{0\}$,
$\langle \Op A \varphi,\varphi\rangle > 0$;
\item positive-semi-definite if, for any $\varphi \in \Spc S$,
$\langle \Op A \varphi,\varphi\rangle \ge 0$;
\item $\V p$-conditionally positive-definite if
\begin{align}
\label{Eq:PositiveDefinite}
\langle \Op A \varphi,\varphi\rangle > 0
\mbox{ for all }\varphi \in \Spc S_{\V p} \backslash\{0\},
\end{align}
where $\Spc S_{\V p}\eqdef\Spc N_{\V p}^\perp
=\{\varphi \in \Spc S: \V p(\varphi)=\V 0\}$.
\end{itemize}
\end{definition}

An important observation is that any positive (semi-)definite operator $\Op A$ induces a
(semi-)norm on $\Spc S$ defined by
\begin{align}
\label{Eq:SemiInnerA}
|\varphi|_{\Op A}\eqdef\sqrt{\langle \Op A \varphi,\varphi \rangle}\ge 0.
\end{align}
The associated (semi-)inner product is recovered via the classical polarization identity
\begin{align}
\label{Eq:Polar}
\tfrac{1}{2}\left(|\varphi|^2_{\Op A}+|\psi|^2_{\Op A} - |\varphi-\psi|^2_{\Op A}\right)
=
\tfrac{1}{2}\big(\langle \Op A \varphi, \psi \rangle
+\langle \Op A \psi, \varphi \rangle\big),
\end{align}
whose right-hand side reduces to $\langle \Op A \varphi, \psi \rangle$ when $\Op A$ is
symmetric. For a general positive-(semi-)definite operator, \eqref{Eq:Polar} recovers the
symmetric part of the bilinear form
$(\varphi,\psi)\mapsto\langle\Op A\varphi,\psi\rangle$: only this part enters the (semi-)norm
\eqref{Eq:SemiInnerA}, which is why the symmetry of $\Op A$ will not be required as a hypothesis in the
sequel.

Of special interest are the positive-definite operators, as they induce Hilbert spaces and extend canonically to Riesz maps. Since every Riesz map is an isometric isomorphism, it possesses a unique inverse Riesz map. This motivates the introduction of the concept of compatible Riesz pairs.

\begin{definition}[Compatible Riesz pair]
\label{Def:CompatibleRieszPair}
Let $(\Spc S,\Spc S')$ be a nuclear Gelfand pair, and let $\Op A,\Op B:\Spc S\to\Spc S'$ be
continuous positive-definite operators. We form the Hilbert spaces
\begin{align*}
\Spc H=\overline{(\Spc S,|\cdot|_{\Op A})},
\qquad
\Spc H'=\overline{(\Spc S,|\cdot|_{\Op B})}.
\end{align*}
The pair $(\Op A,\Op B)$ is a \emph{compatible Riesz pair} for $(\Spc S,\Spc S')$ if $\Op A$ and
$\Op B$ extend to mutually inverse isometric isomorphisms
\begin{align*}
\Op A:\Spc H\to\Spc H',
\qquad
\Op B:\Spc H'\to\Spc H.
\end{align*}
When this holds, $\Op B$ acts as the inverse of $\Op A$, which justifies the notation
$\Op B=\Op A^{-1}$ that we use in the sequel.
\end{definition}

The above definition is motivated by its exact correspondence with the notion of a
Hilbert--Gelfand space. Indeed, Hilbert--Gelfand spaces naturally occur in dual pairs
$(\Spc H,\Spc H')$, with the associated Riesz maps providing mutually inverse
isometries. The following result, whose proof is given in the appendix, shows that these two
viewpoints are equivalent.

\begin{theorem}[Hilbert--Gelfand spaces and compatible Riesz pairs]
\label{Theo:HilbertPositiveDef}
Let $(\Spc S,\Spc S')$ be a nuclear Gelfand pair and let $\Spc H$ be a Hilbert space that contains
$\Spc S$ as a dense subspace. Then, the following statements are equivalent.
\begin{enumerate}
\item $\Spc H$ is an $\Spc S$-Hilbert--Gelfand space; i.e.,
$\Spc S \embedD \Spc H \embedD \Spc S'$.

\item The continuous dual $\Spc H'$ is an $\Spc S$-Hilbert--Gelfand space;
i.e., $\Spc S \embedD \Spc H' \embedD \Spc S'$.

\item The Hilbert space $\Spc H$ is continuously embedded in $\Spc S'$:
$\Spc H \embedC \Spc S'$.

\item There exists a compatible Riesz pair $(\Op A,\Op A^{-1})$ for
$(\Spc S,\Spc S')$ (see Definition~\ref{Def:CompatibleRieszPair})
such that
\begin{align*}
\|\varphi\|_{\Spc H}
=
|\varphi|_{\Op A}
=
\sqrt{\langle \Op A\varphi,\varphi\rangle},
\qquad
\varphi\in\Spc S.
\end{align*}
\end{enumerate}
Whenever these conditions hold, the pair in Item 4 is unique and is
identified as
\begin{align*}
\Op A
=
\Op J_{\Spc H}|_{\Spc S},
\qquad
\Op A^{-1}
=
\Op J_{\Spc H}^{-1}|_{\Spc S},
\end{align*}
where $\Op J_{\Spc H}:\Spc H\to\Spc H'$ is the underlying Riesz map;
moreover,
\begin{align*}
\|\varphi\|_{\Spc H'}
=
|\varphi|_{\Op A^{-1}},
\qquad
\varphi\in\Spc S.
\end{align*}
Conversely, every compatible Riesz pair $(\Op A,\Op A^{-1})$
generates a Hilbert--Gelfand dual pair
\begin{align*}
(\Spc H,\Spc H')
=
\Big(
\overline{(\Spc S,|\cdot|_{\Op A})},\;
\overline{(\Spc S,|\cdot|_{\Op A^{-1}})}
\Big),
\end{align*}
where the Riesz map $\Op J_{\Spc H}:\Spc H\to\Spc H'$ is the unique
continuous extension of $\Op A$.
Moreover, the canonical duality pairing for $(\Spc S,\Spc S')$ is
compatible with the pairing for $(\Spc H,\Spc H')$. Specifically, for
every $f\in\Spc H'\subset\Spc S'$ and every
$\varphi\in\Spc S\subset\Spc H$,
\begin{align}
\label{Eq:DualityProdH}
\langle f,\varphi\rangle
=
\langle f,\varphi\rangle_{\Spc H'\times\Spc H}.
\end{align}
By continuity, \eqref{Eq:DualityProdH} extends to every
$(f,g)\in\Spc H'\times\Spc H$, with
\begin{align}
\label{Eq:HDualityBound}
|\langle f,g\rangle|
\le
\|f\|_{\Spc H'}\,\|g\|_{\Spc H}.
\end{align}
\end{theorem}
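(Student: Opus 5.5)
The plan is to prove the equivalences in the cycle $1\Rightarrow 3\Rightarrow 2\Rightarrow 1$ and then $1\Leftrightarrow 4$, with the identification of the Riesz pair and the duality compatibility obtained along the way.

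\textbf{Step 1: $1\Rightarrow 3$ and the role of the dual.} Item 1 trivially implies Item 3. For $3\Rightarrow 2$, let $\Op i:\Spc S\to\Spc H$ and $\Op j:\Spc H\to\Spc S'$ be the embeddings, and assume $\Op j\circ\Op i=\iota$. We are given $\Op i$ continuous with dense range, since $\Spc S$ is dense in $\Spc H$ and continuity of the inclusion is part of the standing hypothesis, implicitly in Item 3.

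Transposing $\Op i$ gives $\Op i^\ast:\Spc H'\to\Spc S'$. It is continuous, since a bounded set of $\Spc S$ is carried to a bounded set of $\Spc H$. It is injective because $\Op i$ has dense range.

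Transposing $\Op j$ and using reflexivity $(\Spc S')'=\Spc S$ gives $\Op j^\ast:\Spc S\to\Spc H'$. It is continuous and has dense range, because $\Op j$ is injective and $\Spc H$ is reflexive, so a vector annihilating the range of $\Op j^\ast$ would lie in $\ker\Op j=\{0\}$.

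The canonical symmetry $\langle\iota\varphi,\psi\rangle=\langle\iota\psi,\varphi\rangle$ gives $\Op i^\ast\Op j^\ast=\iota$. This yields Item 2.

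Applying the same argument to $\Spc H'$, whose dual is $\Spc H''=\Spc H$, gives $2\Rightarrow 1$. The same transpose computation yields \eqref{Eq:DualityProdH}: for $f=\Op j^\ast\psi$ one has $\langle f,\Op i\varphi\rangle_{\Spc H'\times\Spc H}=\langle\iota\psi,\varphi\rangle$. Density then extends this to all $f\in\Spc H'$. The bound \eqref{Eq:HDualityBound} is just the dual-norm inequality.

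\textbf{Step 2: $1\Rightarrow 4$.} Let $\Op J_{\Spc H}$ be the Riesz map and define $\Op A=\Op i^\ast\Op J_{\Spc H}\Op i:\Spc S\to\Spc S'$. It is continuous as a composition. It is positive-definite with $\langle\Op A\varphi,\varphi\rangle=\|\varphi\|^2_{\Spc H}$, so the completion $\overline{(\Spc S,|\cdot|_{\Op A})}$ is exactly $\Spc H$.

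Define $\Op B=\Op j\,\Op J_{\Spc H}^{-1}\Op j^\ast$. Then $\langle\Op B\varphi,\varphi\rangle=\|\Op j^\ast\varphi\|^2_{\Spc H'}$, and this is positive for $\varphi\neq0$ because $\Op j^\ast$ is injective, by the symmetric version of Step 1. Since $\Op j^\ast(\Spc S)$ is dense in $\Spc H'$, the $\Op B$-completion is $\Spc H'$, which gives $\|\varphi\|_{\Spc H'}=|\varphi|_{\Op A^{-1}}$.

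The extensions of $\Op A$ and $\Op B$ are $\Op J_{\Spc H}$ and $\Op J_{\Spc H}^{-1}$, so they are mutually inverse isometries.

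Uniqueness follows because any $\Op A$ satisfying Item 4 has symmetric part fixed by polarization \eqref{Eq:Polar}. Moreover, its extension must be an isometric isomorphism $\Spc H\to\Spc H'$ that is compatible with the pairing \eqref{Eq:DualityProdH}, and the Riesz map is the only such operator. This point needs care, because $\Op A$ need not be symmetric. I would argue that an isometric surjection $\Op T$ with $\langle \Op T f,f\rangle=\|f\|^2$ forces $\Op T=\Op J_{\Spc H}$, since $\|\Op Tf\|\,\|f\|\ge\langle\Op Tf,f\rangle=\|f\|^2=\|\Op T f\|\,\|f\|$. Equality in Cauchy--Schwarz then pins down $\Op Tf$.

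\textbf{Step 3: $4\Rightarrow 1$, and the converse statement.} This is the main obstacle. Given a compatible Riesz pair, I must show that the abstract completion $\Spc H$ embeds injectively and continuously into $\Spc S'$.

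I would first establish $|\langle\Op A\varphi,\psi\rangle|\le|\varphi|_{\Op A}|\psi|_{\Op A}$. For the candidate embedding of $f\in\Spc H$, with $\Op A f\in\Spc H'$ and $\psi\in\Spc S$, I would define it through $\langle f,\psi\rangle$ as the value of $\Op A f$ against $\Op B\psi\in\Spc H$, that is, $\langle \Op A f,\Op B\psi\rangle$. The aim is then to check three things.
\begin{itemize}
\item The definition agrees with $\iota$ on $\Spc S$. This requires $\langle\Op A\varphi,\Op B\psi\rangle=\langle\varphi,\psi\rangle$, i.e.\ that the mutual inverse relation holds at the level of the $\Spc S$-pairing. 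This is the delicate identity, and I would derive it from $\Op B\Op A=\Identity$ on the completion together with continuity of $\Op B:\Spc S\to\Spc S'$.
\item The map is continuous into $\Spc S'$. The estimate is $|\langle f,\psi\rangle|\le\|f\|_{\Spc H}\,|\psi|_{\Op B}$, and $|\cdot|_{\Op B}$ is a continuous seminorm on $\Spc S$.
\item The map is injective. This follows from the density of $\Op B(\Spc S)$ in $\Spc H$, which holds because $\Op B$ is an isomorphism from $\Spc H'$ onto $\Spc H$ and $\Spc S$ is dense in $\Spc H'$.
\end{itemize}
Once these are in place, Item 1 holds. The dual pair $(\Spc H,\Spc H')$ and the claim that $\Op J_{\Spc H}$ extends $\Op A$ then follow from Step 2.
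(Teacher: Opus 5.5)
\textbf{Main gap: Step~3 ($4\Rightarrow1$ and the converse part).} Your Step~3 leaves this implication open, and the derivation you sketch would not close it.

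As written, $\langle\Op A f,\Op B\psi\rangle$ pairs an element of the abstract completion $\overline{(\Spc S,|\cdot|_{\Op B})}$ with an element of $\overline{(\Spc S,|\cdot|_{\Op A})}$ (or with the distribution $\Op B\psi$). No such pairing exists until you know that the first completion is the dual of the second, through an extension of the $\Spc S$--$\Spc S'$ pairing. The natural well-defined reading is $\langle f,\Op B\psi\rangle_{\Spc H}$, equivalently $\langle \Op A f,\psi\rangle$ in the $\Op B$-completion. With that reading, your continuity and injectivity checks are fine. But everything then hinges on $\langle\varphi,\Op B\psi\rangle_{\Spc H}=\langle\varphi,\psi\rangle$ for $\varphi,\psi\in\Spc S$. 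This identity already contains the bound $|\langle\varphi,\psi\rangle|\le|\varphi|_{\Op A}|\psi|_{\Op B}$ and both supremum identities, i.e., the whole content of compatibility.

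Continuity of $\Op B:\Spc S\to\Spc S'$ cannot supply it. The test functions that represent $\Op B\psi$ in $\Spc H$ (or $\Op A\varphi$ in the $\Op B$-completion) converge only in the energy norms, not in $\Spc S$. Likewise, ``$\Op B\Op A=\Identity$ on the completion'' is a norm statement in an abstract space, and transferring it to the action on a test function requires exactly the bound you are trying to prove.

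The paper instead takes compatibility in its operational form on $\Spc S$: $|\langle\varphi,\psi\rangle|\le|\varphi|_{\Op A^{-1}}|\psi|_{\Op A}$, together with $|\psi|_{\Op A}=\sup_{\varphi\neq0}|\langle\varphi,\psi\rangle|/|\varphi|_{\Op A^{-1}}$ and its mirror image. From these it shows that $\psi\mapsto\langle\cdot,\psi\rangle$ extends to an isometry of $\overline{(\Spc S,|\cdot|_{\Op A^{-1}})}$ into $\Spc H^\ast$. The range is closed, and it is also dense because these functionals are norming for $\Spc H$, so the completion is the dual. Only after this identification are both spaces realized in $\Spc S'$, by transposing the dense-range inclusions.

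Your remark that the dual pair and the identification of $\Op J_{\Spc H}$ ``follow from Step~2'' does not close the loop either. Step~2 builds its own pair from Item~1. Matching it with the given pair needs the same identification of $\overline{(\Spc S,|\cdot|_{\Op B})}$ with $\Spc H'$, and so does your uniqueness argument, which uses $\langle\Op Tf,f\rangle=\|f\|^2$ for the $\Spc H'\times\Spc H$ pairing.

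\textbf{Secondary gap: continuity of $\Op i$ in $3\Rightarrow2$.} There you treat the continuity of $\Op i:\Spc S\to\Spc H$ as part of the hypotheses, but it is not. The standing assumption is only that $\Spc S$ is a dense subspace of $\Spc H$, and Item~3 asserts only $\Spc H\embedC\Spc S'$. The point of Item~3 is that continuity of $\Op i$ comes for free, and this has to be proved.

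The paper uses the closed-graph theorem. Suppose $\varphi_n\to\varphi$ in $\Spc S$ and $\Op i\varphi_n\to u$ in $\Spc H$. Then $\Op j u=\lim_n\iota\varphi_n=\iota\varphi=\Op j\Op i\varphi$, so $u=\Op i\varphi$ by injectivity of $\Op j$. Since $\Spc S$ is barrelled and $\Spc H$ is a Banach space, Pt\'ak's closed-graph theorem gives continuity.

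With this added, the rest of your argument holds up:
\begin{itemize}
\item Your transposition argument ($\Op i^\ast$ injective, $\Op j^\ast$ with dense range, $\Op i^\ast\Op j^\ast=\iota$, and reflexivity of $\Spc H$ for $2\Rightarrow1$) is correct. It is somewhat more direct than the paper's route, which passes through a Riesz pair for $2\Rightarrow1$.
\item Your construction in $1\Rightarrow4$ and the Cauchy--Schwarz-equality uniqueness argument match Step~1 of the appendix and Remark~\ref{Rem:AutomaticSymmetry}.
\end{itemize}
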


Item 3 in Theorem~\ref{Theo:HilbertPositiveDef} is noteworthy, as it provides a simple criterion
for identifying a Hilbert--Gelfand space: a single continuous embedding suffices, while the
continuity of $\Spc S\embedD\Spc H$ and the density of $\Spc H$ in $\Spc S'$ come for free. We also
note that the positive-definiteness of $\Op A^{-1}$ in Item 4 can be replaced by the embedding
condition $\Spc H \embedC \Spc S'$ or by the denseness of $\Spc S$ in $\Spc H'$. The converse
equivalence is fundamental, for it yields a constructive mechanism for specifying
Hilbert--Gelfand spaces, starting from a compatible Riesz pair $(\Op A, \Op A^{-1})$.

While the positive-definiteness of $\Op A^{-1}$ in Theorem~\ref{Theo:HilbertPositiveDef}
may appear to be redundant, it is an essential requirement in the infinite-dimensional setting.
The operator $\Op B=\Op D^\ast\Op D$ on $\Spc S(\R)$ is positive-definite, but its inverse is only
conditionally positive-definite, so that the completion
$\overline{(\Spc S,|\cdot|_{\Op D^\ast\Op D})}=\dot H^1(\R)$ is a space of functions modulo constants
that cannot be identified with a subspace of $\Spc S'$; on the realization of such spaces, see
\cite{Monguzzi2020}. This obstruction, which is the one that the theory of the next sections
removes, is worked out in Example~\ref{Ex:LinearSpline}.

We conclude this section by highlighting a useful implication of
Theorem~\ref{Theo:HilbertPositiveDef}, which supports the algebraic notion of operator
invertibility on $\Spc S$. For a continuous positive-definite operator $\Op A:\Spc S\to\Spc S'$,
the \emph{Hilbertian extension} $\widetilde{\Op A}=\Op J_{\Spc H_\Op A}:\Spc H_{\Op A}\to\Spc H_{\Op A}'$
is the Riesz map that extends $\Op A$ to $\Spc H_{\Op A}=\overline{(\Spc S,|\cdot|_{\Op A})}\subset\Spc S'$.
Given a second such operator $\Op B$, the identity $\Op A\Op B=\Identity$ on $\Spc S$ is to be read
through these extensions: it means that $\Op B(\Spc S)\subseteq\Spc H_{\Op A}$ and
$\widetilde{\Op A}\,\Op B\varphi=\varphi$ for every $\varphi\in\Spc S$; that is,
$\widetilde{\Op A}|_{\Op B(\Spc S)}=(\Op B|_{\Spc S})^{-1}$. When $\Spc H_{\Op A}$ is a
Hilbert--Gelfand space, this relation between the extended operators restricts unambiguously to
the stated identity on $\Spc S$.

\begin{corollary}[Algebraic characterization of compatibility]
\label{Cor:AlgebraicInverse}
Let $\Op A,\Op B:\Spc S\to\Spc S'$ be continuous positive-definite operators with Hilbertian extensions $\widetilde{\Op A}$ and
$\widetilde{\Op B}$ on the complete spaces $\Spc H_{\Op A}$ and $\Spc H_{\Op B}$. Then, the following statements are equivalent.
\begin{enumerate}
\item[(i)] $(\Op A,\Op B)$ is a compatible Riesz pair for $(\Spc S,\Spc S')$
(Definition~\ref{Def:CompatibleRieszPair}).
\item[(ii)] $\Op A\Op B=\Op B\Op A=\Identity$ on $\Spc S$ \emph{through Hilbertian
extensions}; equivalently,
the Hilbertian extensions $\widetilde{\Op A}$ and $\widetilde{\Op B}$ are mutually inverse Riesz
maps.
\item[(iii)] The two norms are dual to one another on $\Spc S$:
\begin{align*}
|\psi|_{\Op B}=\sup_{\varphi\in\Spc S\backslash\{0\}}
\frac{|\langle\varphi,\psi\rangle|}{|\varphi|_{\Op A}}
\qquad\text{and}\qquad
|\varphi|_{\Op A}=\sup_{\psi\in\Spc S\backslash\{0\}}
\frac{|\langle\varphi,\psi\rangle|}{|\psi|_{\Op B}}.
\end{align*}
\end{enumerate}
When these hold, $\Spc H_{\Op A}$ and $\Spc H_{\Op B}=\Spc H_{\Op A}'$ are Hilbert--Gelfand spaces,
and $\Op B$ is the unique operator that inverts $\Op A$, which justifies the notation
$\Op B=\Op A^{-1}$.
\end{corollary}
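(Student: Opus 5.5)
The plan is to go around the cycle (i)$\Rightarrow$(ii)$\Rightarrow$(iii)$\Rightarrow$(i), with Theorem~\ref{Theo:HilbertPositiveDef} doing most of the work.

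\emph{(i)$\Rightarrow$(ii).} By Definition~\ref{Def:CompatibleRieszPair}, $\Op A$ and $\Op B$ extend to mutually inverse isometries $\Spc H_{\Op A}\to\Spc H_{\Op B}$ and $\Spc H_{\Op B}\to\Spc H_{\Op A}$. Theorem~\ref{Theo:HilbertPositiveDef} (Item~4$\Rightarrow$1 and its converse part) then gives three facts: $\Spc H_{\Op A}$ is a Hilbert--Gelfand space, $\Spc H_{\Op B}=\Spc H_{\Op A}'$, and the extension of $\Op A$ is the Riesz map $\Op J_{\Spc H_{\Op A}}=\widetilde{\Op A}$. The symmetric statement, with the roles of $\Op A$ and $\Op B$ exchanged, identifies the extension of $\Op B$ with $\widetilde{\Op B}=\Op J_{\Spc H_{\Op B}}$. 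Hence $\widetilde{\Op A}$ and $\widetilde{\Op B}$ are mutually inverse, and $\Op B(\Spc S)\subseteq\Spc H_{\Op B}'=\Spc H_{\Op A}$. Restricting to $\Spc S$ yields $\widetilde{\Op A}\Op B\varphi=\varphi$ and $\widetilde{\Op B}\Op A\varphi=\varphi$, which is (ii).

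\emph{(ii)$\Rightarrow$(iii).} For $\psi\in\Spc S$, set $g=\Op B\psi\in\Spc H_{\Op A}$, so that $\widetilde{\Op A}g=\psi$. For every $\varphi\in\Spc S$,
\[
\langle\varphi,\psi\rangle=\langle \widetilde{\Op A} g,\varphi\rangle=(g,\varphi)_{\Spc H_{\Op A}}.
\]
Since $\Spc S$ is dense in $\Spc H_{\Op A}$, taking the supremum over $\varphi$ gives $\|g\|_{\Spc H_{\Op A}}$. In turn,
\[
\|g\|_{\Spc H_{\Op A}}^2=\langle \widetilde{\Op A} g,g\rangle=\langle\psi,\Op B\psi\rangle=|\psi|_{\Op B}^2 .
\]
The second formula follows by exchanging the roles of $\Op A$ and $\Op B$. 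The delicate point is to justify that the pairing $\langle\widetilde{\Op A} g,\varphi\rangle$ agrees with the $\Spc S\times\Spc S'$ pairing when one of the arguments is $\Op B\psi\in\Spc S'$. I would handle this by approximating $g$ by test functions $\varphi_n\to g$ in $\Spc H_{\Op A}$, using continuity of $\widetilde{\Op A}$ into $\Spc H_{\Op A}'$ together with the symmetric part of the form as in \eqref{Eq:Polar}.

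\emph{(iii)$\Rightarrow$(i), the main obstacle.} From the first identity in (iii), each $\psi\in\Spc S$ defines the functional $\varphi\mapsto\langle\varphi,\psi\rangle$, which is bounded on $(\Spc S,|\cdot|_{\Op A})$ with norm exactly $|\psi|_{\Op B}$. This gives an isometric map $\Spc H_{\Op B}\to\Spc H_{\Op A}'$. Its range is dense by the second identity: a functional vanishing on the range would be represented, via the Riesz map, by some $h\in\Spc H_{\Op A}$ with $\langle\psi,h\rangle=0$ for all $\psi$. The second identity extends by density to $h$, and forces $h=0$. So $\Spc H_{\Op B}\cong\Spc H_{\Op A}'$. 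Testing $\varphi\mapsto\langle\varphi,\psi\rangle$ shows that the Riesz map of $\Spc H_{\Op A}$ sends $\Op B\psi$ to $\psi$, provided we can show $\Op B\psi\in\Spc H_{\Op A}$. I would establish this by representing the functional $\varphi\mapsto\langle\varphi,\psi\rangle$ by $h_\psi\in\Spc H_{\Op A}$ and checking that $(h_\psi,h_{\psi'})=\langle\Op B\psi,\psi'\rangle$ through polarization. The identification $h_\psi=\Op B\psi$ as elements of $\Spc S'$ requires $\Spc H_{\Op A}\hookrightarrow\Spc S'$. That embedding follows from the second identity, since $|\langle\varphi,\psi\rangle|\le|\varphi|_{\Op A}|\psi|_{\Op B}$ and $|\cdot|_{\Op B}$ is continuous on $\Spc S$, which gives weak-$\ast$ continuity. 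With $\Spc H_{\Op A}\hookrightarrow\Spc S'$ established, Item~3 of Theorem~\ref{Theo:HilbertPositiveDef} applies, and the uniqueness of the Riesz pair there yields (i).

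The final claims then follow from Theorem~\ref{Theo:HilbertPositiveDef}. The Hilbert--Gelfand property of both spaces is Item~2 there. Uniqueness of $\Op B$ holds because any compatible partner must coincide with $\Op J_{\Spc H_{\Op A}}^{-1}|_{\Spc S}$.
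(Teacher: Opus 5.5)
Your steps (i)$\Rightarrow$(ii)$\Rightarrow$(iii) are sound and are essentially the paper's computations; the paper runs the same Riesz-identity argument with the roles of $\Op A$ and $\Op B$ swapped. In (iii)$\Rightarrow$(i), your dual isometry $\Spc H_{\Op B}\to\Spc H_{\Op A}'$, its density via the norming property, and the embedding $\Spc H_{\Op A}\embedC\Spc S'$ reproduce Step~2 of the appendix, which is what the paper invokes.

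The gap is the final identification of $(\Op A,\Op B)$ with the Riesz maps. Polarizing $\|h_\psi\|_{\Spc H_{\Op A}}=|\psi|_{\Op B}$ only gives
\[
(h_\psi,h_{\psi'})_{\Spc H_{\Op A}}=\tfrac12\big(\langle\Op B\psi,\psi'\rangle+\langle\Op B\psi',\psi\rangle\big).
\]
So $h_\psi$ is the symmetric part of $\Op B$ applied to $\psi$, not $\Op B\psi$. The paper points this out after \eqref{Eq:Polar} and deliberately does not assume symmetry. The uniqueness clause of Theorem~\ref{Theo:HilbertPositiveDef} cannot close the step either. That clause compares compatible Riesz pairs, and $(\Op A,\Op B)$ is not yet known to be one. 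Item~3 only supplies the canonical pair $(\Op J_{\Spc H_{\Op A}}|_{\Spc S},\Op J_{\Spc H_{\Op A}}^{-1}|_{\Spc S})$. Its quadratic forms agree with those of $\Op A$ and $\Op B$, which pins down their symmetric parts only.

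This cannot be repaired from (iii) alone. Condition (iii) sees $\Op A,\Op B$ only through $|\cdot|_{\Op A}$ and $|\cdot|_{\Op B}$, whereas (i) forces symmetry (Remark~\ref{Rem:AutomaticSymmetry}). A counterexample on $(\R^2,\R^2)$: take $\Op A=\Identity$ and $\Op B=\Identity+\M K$ with $\M K=\begin{pmatrix}0&1\\-1&0\end{pmatrix}$.
\begin{itemize}
\item Both operators are positive-definite and induce the Euclidean norm, so (iii) holds.
\item Yet $\Op A\Op B=\Identity+\M K\neq\Identity$ and $|\Op B y|_2^2=2|y|_2^2$, so (i) and (ii) fail.
\end{itemize}

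You must therefore assume that $\Op A$ and $\Op B$ are symmetric. Alternatively, read this into ``with Hilbertian extensions'', since $\Op J_{\Spc H_{\Op A}}$ extends $\Op A$ exactly when $\langle\Op A\varphi,\psi\rangle=(\varphi,\psi)_{\Spc H_{\Op A}}$ on $\Spc S\times\Spc S$. With symmetry, the rest of your argument goes through:
\begin{itemize}
\item Your polarization step becomes correct and gives $h_\psi=\Op B\psi$ in $\Spc S'$.
\item The exchanged argument shows that $\Op A\varphi\in\Spc H_{\Op B}$ corresponds to $\Op J_{\Spc H_{\Op A}}\varphi$ under $\Spc H_{\Op B}\cong\Spc H_{\Op A}'$.
\item Statement (i) then follows directly, without any uniqueness argument.
\end{itemize}

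The paper's route carries the same tacit assumption: the last paragraph of Step~2 uses $\langle\varphi,\psi\rangle_{\Spc H}=\langle\Op A\varphi,\psi\rangle$. The same example also shows that the closing uniqueness of $\Op B$ holds only among compatible partners, as you stated it. It does not hold among all operators satisfying (iii), which is how the paper phrases it.
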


In light of these equivalences, we shall hereafter drop the symbol $\ \widetilde{\phantom{ }}\ $ and represent the operator $\Op A: \Spc S \to \Spc S'$ and its Hilbertian extension $\widetilde{\Op A}: \Spc H_{\Op A} \to \Spc H_{\Op A}'$ by the same symbol $\Op A$.

\begin{proof}
We establish (ii)$\Leftrightarrow$(iii) and (i)$\Leftrightarrow$(iii).

\medskip
\emph{(ii)$\Rightarrow$(iii)}:
Let us assume that $\widetilde{\Op A}$ and $\widetilde{\Op B}$ are mutually inverse. Then
$\widetilde{\Op B}=\widetilde{\Op A}^{-1}$ identifies $\Spc H_{\Op B}$ isometrically with the dual
$\Spc H_{\Op A}'$, so that $\|\Op A\varphi\|_{\Spc H_{\Op B}}=|\varphi|_{\Op A}$ for every
$\varphi\in\Spc S$. Since $\widetilde{\Op B}$ extends $\Op B$, the Riesz identity reads
$\langle\varphi,\psi\rangle=\langle\Op A\varphi,\psi\rangle_{\Spc H_{\Op B}}$ for every
$\psi\in\Spc S$; as $\Spc S$ is dense in $\Spc H_{\Op B}$, the supremum over $\psi$ gives
$\sup_{\psi\neq0}|\langle\varphi,\psi\rangle|/|\psi|_{\Op B}=\|\Op A\varphi\|_{\Spc H_{\Op B}}
=|\varphi|_{\Op A}$, which is the second identity of (iii). The first follows from the symmetric
computation with $\widetilde{\Op A}\widetilde{\Op B}=\Identity$.

\medskip
\emph{(iii)$\Rightarrow$(ii)}:
The identities of (iii) are the relations \eqref{Eq:CompatBound}--\eqref{Eq:CompatSharp} in the proof of Theorem \ref{Theo:HilbertPositiveDef} with
$\Op A^{-1}=\Op B$. By Step 2 of the proof of Theorem~\ref{Theo:HilbertPositiveDef}, they realize
$\widetilde{\Op A}$ and $\widetilde{\Op B}$ as mutually inverse Riesz maps.

\medskip
\emph{(i)$\Leftrightarrow$(iii)}:
If $(\Op A,\Op B)$ is a compatible Riesz pair, then Step 1 of the proof of
Theorem~\ref{Theo:HilbertPositiveDef}, applied to $\Spc H=\Spc H_{\Op A}$, produces
\eqref{Eq:CompatBound}--\eqref{Eq:CompatSharp} in the appendix with $\Op A^{-1}=\Op B$, which is (iii).
Conversely, (iii) supplies these relations, which are the sole ingredients of Step 2 of that proof:
they realize $\Spc H_{\Op A}$ and $\Spc H_{\Op B}=\Spc H_{\Op A}'$ as mutually dual
Hilbert--Gelfand spaces and identify $\widetilde{\Op A},\widetilde{\Op B}$ as the unique continuous
extensions of $\Op A,\Op B$; hence $(\Op A,\Op B)$ is a compatible Riesz pair.

\medskip
\emph{Uniqueness}:
Let us suppose that $(\Op A,\Op B_1)$ also satisfies (iii). The first supremum gives
$|\cdot|_{\Op B_1}=|\cdot|_{\Op B}$ on $\Spc S$, so that $\Op B_1$ and $\Op B$ induce the same
Hilbert--Gelfand space $\Spc H_{\Op A}'$; the uniqueness part of
Theorem~\ref{Theo:HilbertPositiveDef} (Step 6) gives $\Op B_1=\Op B$.
\end{proof}

\begin{remark}[Automatic symmetry and uniqueness]
\label{Rem:AutomaticSymmetry}
The symmetry of $\Op A$ is not required as a hypothesis anywhere in this work, the reason being
that it is a consequence of the Riesz-pair property. Let $(\Op A,\Op B)$ be a compatible Riesz
pair, let $\Op J_{\Spc H}:\Spc H\to\Spc H'$ be the Riesz map of $\Spc H=\Spc H_{\Op A}$, and let
$\Op T:\Spc H\to\Spc H'$ be the isometric extension of $\Op A$ that
Definition~\ref{Def:CompatibleRieszPair} provides. Then,
$\Op U\eqdef\Op J_{\Spc H}^{-1}\Op T:\Spc H\to\Spc H$ is an isometric isomorphism, and the
compatibility \eqref{Eq:DualityProdH} of the pairings gives
$\langle\Op U u,u\rangle_{\Spc H}=\langle\Op A u,u\rangle=\|u\|_{\Spc H}^2$ for $u\in\Spc S$,
hence for every $u\in\Spc H$ by density. Consequently,
\begin{align*}
\|\Op U u-u\|_{\Spc H}^2
=\|\Op U u\|_{\Spc H}^2-2\langle\Op U u,u\rangle_{\Spc H}+\|u\|_{\Spc H}^2
=0,
\end{align*}
so that $\Op U=\Identity$ and $\Op T=\Op J_{\Spc H}$. This yields
$\langle\Op A\varphi,\psi\rangle=\langle\varphi,\psi\rangle_{\Spc H}$ for all
$\varphi,\psi\in\Spc S$, which is a symmetric expression. The operator $\Op A$ is therefore
symmetric, and it is the unique operator that induces $\Spc H$ (Step 6 of the proof of
Theorem~\ref{Theo:HilbertPositiveDef}). The same argument applies to the $\V p$-conditional Riesz
pairs of Section~\ref{Sec:CondRieszPairs}, where it yields the symmetry of $\Op A$ on
$\Spc S_{\V p}\times\Spc S_{\V p}$ and the uniqueness of the conditioned operator
$\Op A_{\V \phi}$ of Theorem~\ref{Thm:Concretization}, while $\Op A$ remains unconstrained off
$\Spc S_{\V p}$.
\end{remark}

\section{Conditional Riesz Pairs}
\label{Sec:CondRieszPairs}
The remainder of the paper focuses on the more challenging scenarios where one of the operators in the Riesz pair is only $\V p$-conditionally positive-definite.
\subsection{Abstract Formulation}
\label{Sec:AbstractFormulation}
For Theorem~\ref{Theo:HilbertPositiveDef} to be applicable, both $\Op A$ and its inverse must be positive-definite. We have already pointed out that this requirement fails for the operators arising in spline theory, which have a nontrivial finite-dimensional null space. As we shall see, these are precisely the operators whose inverse is only \emph{conditionally} positive-definite, as illustrated by $\Op B=\Op D^\ast\Op D$. In that case, the energy completion $\dot{\Spc H}_{\Op B}=\overline{(\Spc S,|\cdot|_{\Op B})}$ consists of equivalence classes of distributions. The obstruction that prevents $\dot{\Spc H}_{\Op B}$ from being an ordinary space of (generalized) functions---and that, once neutralized, will yield the desired Hilbert--Gelfand embedding---is the part of the null
space that the energy \emph{attains} in $\Spc S'$.

\begin{definition}[Attainable null space]
\label{Def:AttainableNull}
Let $q$ be a continuous seminorm on $\Spc S$. Its \emph{attainable null space} is
\begin{align*}
\Spc N_q\eqdef\big\{p\in\Spc S':\ p=\textstyle\lim_{k}\chi_k\ \text{in}\ \Spc S'
\ \text{for some}\ (\chi_k)\subset\Spc S\ \text{with}\ q(\chi_k)\to0\big\}.
\end{align*}
For the seminorm induced by a positive-semi-definite operator $\Op B:\Spc S\to\Spc S'$, we write
$\Spc N_{\Op B}\eqdef\Spc N_{|\cdot|_{\Op B}}$.
\end{definition}

The set $\Spc N_{\Op B}$ is a linear subspace of $\Spc S'$ that captures the obstruction exactly: the
completion $\overline{(\Spc S,|\cdot|_{\Op B})}$ embeds not into $\Spc S'$ but into the quotient
$\Spc S'/\Spc N_{\Op B}$, and the embedding is genuine precisely when $\Spc N_{\Op B}=\{0\}$. In effect,
the attainable null space $\Spc N_{\Op B}$
collects the \emph{drifts} (or trends) that the spline energy annihilates in the limit.
Let us stress that Definition~\ref{Def:AttainableNull} imposes no finiteness on $\Spc N_{\Op B}$,
which is specified as a set of weak-$\ast$ limits and could a priori be infinite-dimensional. 

Our proposed strategy is to neutralize these drifts by conditioning against them. Throughout,
$(\Spc S,\Spc S')$ is a nuclear Gelfand pair and $\V p=(p_1,\dots,p_{N_0})$ is a linearly independent
family in $\Spc S'$ spanning $\Spc N_{\V p}$, with

\begin{align*}
\Spc S_{\V p}\eqdef\Spc N_{\V p}^\perp=\{\varphi\in\Spc S:\V p(\varphi)=\V 0\}.
\end{align*}
The members of $\Spc S_{\V p}$ are the \emph{detrended} test functions---those annihilated by the moment
conditions $\V p(\varphi)=\V 0$. Conditioning thus has two dual faces: the drifts are \emph{quotiented
out} on the energy side, where they become classes of $\Spc S'/\Spc N_{\Op B}$, and \emph{detrended}
on the test-function side.
We take $\Op A,\Op B:\Spc S\to\Spc S'$ continuous, with $\Op B$ symmetric positive-definite and
$\Op A$ its conditional inverse, which is fixed solely by the requirement that it invert $\Op B$.
By completion, we form the two
Hilbert spaces
\begin{align}
\label{Eq:HA} 
\Spc H_{\Op A}\eqdef\overline{(\Spc S_{\V p},|\cdot|_{\Op A})}\\%
\label{Eq:HB} 
\dot{\Spc H}_{\Op B}\eqdef\overline{(\Spc S,|\cdot|_{\Op B})},
\end{align}
taken in the abstract sense (equivalence classes of Cauchy sequences), the second of which we
call the \emph{homogeneous space} induced by $\Op B$. 
Their realizations are asymmetric:
$\Spc H_{\Op A}$ is a genuine subspace of $\Spc S'$, whereas $\dot{\Spc H}_{\Op B}$ is a subspace of
the quotient $\Spc S'/\Spc N_{\Op B}$, with $\pi:\Spc S'\to\Spc S'/\Spc N_{\Op B}$ the corresponding quotient map (canonical
projection).

It remains to pair the two spaces. The correct configuration matches the conditioning to the
drifts, $\Spc N_{\V p}=\Spc N_{\Op B}$, and requires $\Op A$ and $\Op B$ to act as mutually inverse
isometries between $\Spc H_{\Op A}$ and $\dot{\Spc H}_{\Op B}$.

\begin{definition}[$\V p$-conditional Riesz pair]
\label{Def:Intrinsic}
Let $\Op A,\Op B:\Spc S\to\Spc S'$ be continuous, with $\Op B$ symmetric positive-definite and
$\Op A$ $\V p$-conditionally positive-definite, so that the Hilbert spaces $\Spc H_{\Op A}$ and
$\dot{\Spc H}_{\Op B}$ of \eqref{Eq:HA}  and \eqref{Eq:HB}  are well defined. 
Then, the pair $(\Op A,\Op B)$ is a \emph{$\V p$-conditional Riesz pair} for $(\Spc S,\Spc S')$ if
\begin{enumerate}
\item[(i)] $\Spc N_{\Op B}=\Spc N_{\V p}$; and
\item[(ii)] the map $\pi\,\Op A$ and the operator $\Op B$ extend to mutually inverse isometric
isomorphisms
\begin{align*}
\pi\,\Op A:\ \Spc H_{\Op A}\ \longrightarrow\ \dot{\Spc H}_{\Op B},
\qquad
\Op B:\ \dot{\Spc H}_{\Op B}\ \longrightarrow\ \Spc H_{\Op A}.
\end{align*}
\end{enumerate}
\end{definition}

The conditional positive-definiteness of $\Op A$ is required here only to ensure the well-posedness of $\Spc H_{\Op A}$.
It is not an independent assumption: Theorem~\ref{Thm:AttainableNull} below derives it from
Clause~(i) and the underlying invertibility requirements.

Let us spell out Clause~(ii) at the level of test functions. The Cauchy--Schwarz inequality
$|\langle\Op B\psi_k,\psi\rangle|\le|\psi_k|_{\Op B}|\psi|_{\Op B}$ shows that $|\psi_k|_{\Op B}\to0$
forces $\Op B\psi_k\to0$ in $\Spc S'$, so that the extension of $\Op B$ annihilates the null
classes of $\dot{\Spc H}_{\Op B}$ and is well defined on the equivalence classes
$[f]=f+\Spc N_{\V p}$. For $\varphi\in\Spc S_{\V p}$, the distribution $\Op A\varphi$ lies in
$\Spc S'$ and $\pi(\Op A\varphi)$ is a well-defined element of $\Spc S'/\Spc N_{\Op B}$. Clause~(ii)
then requires, for every $\varphi\in\Spc S_{\V p}$, a $|\cdot|_{\Op B}$-Cauchy sequence
$(\psi_k)\subset\Spc S$ that represents the abstract class $\pi\Op A\varphi$, in the sense that
$\pi\psi_k\to\pi(\Op A\varphi)$ in $\Spc S'/\Spc N_{\Op B}$, and whose $\Op B$-image returns
$\varphi$, in the sense that $\Op B\psi_k\to\varphi$ in $\Spc S'$; the isometry requirement reads
$\|\pi\Op A\varphi\|_{\dot{\Spc H}_{\Op B}}=|\varphi|_{\Op A}$, and the reverse composition
$\pi\Op A\circ\Op B=\Identity$ on $\dot{\Spc H}_{\Op B}$ is interpreted in the same sequential
fashion. The identification $\dot{\Spc H}_{\Op B}=(\Spc H_{\Op A})'$ of
Proposition~\ref{Prop:Realization} below then realizes the duality as the continuous extension of
the $\Spc S$--$\Spc S'$ pairing, which is unambiguous because every element of $\Spc H_{\Op A}$
is annihilated by $\Spc N_{\V p}=\Spc N_{\Op B}$.

\begin{proposition}[Realization]
\label{Prop:Realization}
Let $(\Op A,\Op B)$ be a $\V p$-conditional Riesz pair in the sense of
Definition~\ref{Def:Intrinsic}. Then, the following properties hold.
\begin{enumerate}
\item \emph{Duality bound}: For every $\varphi\in\Spc S_{\V p}$ and $\psi\in\Spc S$,
\begin{align*}
|\langle\varphi,\psi\rangle|\le|\varphi|_{\Op A}\,|\psi|_{\Op B},
\qquad\text{and}\qquad
\langle\varphi,\psi\rangle=\langle\pi\Op A\varphi,\pi\psi\rangle_{\dot{\Spc H}_{\Op B}} .
\end{align*}
\item \emph{Realization of $\Spc H_{\Op A}$}: Let $h\in\Spc H_{\Op A}$ be represented by the
$|\cdot|_{\Op A}$-Cauchy sequence $(\varphi_j)\subset\Spc S_{\V p}$. Then the limit
\begin{align*}
\langle h,\psi\rangle\eqdef\lim_{j\to\infty}\langle\varphi_j,\psi\rangle
\end{align*}
exists for every $\psi\in\Spc S$, depends only on $h$, and defines an element of $\Spc S'$. The
resulting map $\Spc H_{\Op A}\to\Spc S'$ is linear, injective, and continuous for the weak-$\ast$
topology; it realizes $\Spc H_{\Op A}$ as a subspace of $\Spc S'$.
\item \emph{Realization of $\dot{\Spc H}_{\Op B}$}: In the same fashion, every $g\in\dot{\Spc H}_{\Op B}$
that is represented by the $|\cdot|_{\Op B}$-Cauchy sequence $(\psi_k)\subset\Spc S$ defines an
element of $(\Spc S_{\V p})'\cong\Spc S'/\Spc N_{\V p}$ through
$\langle g,\varphi\rangle\eqdef\lim_k\langle\psi_k,\varphi\rangle$ for $\varphi\in\Spc S_{\V p}$, and
the resulting map $\dot{\Spc H}_{\Op B}\to\Spc S'/\Spc N_{\V p}$ is linear, injective, and
continuous.
\end{enumerate}
\end{proposition}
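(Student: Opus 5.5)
The plan is to derive everything from Clause~(ii) of Definition~\ref{Def:Intrinsic}, in the same way as Step~1 of the proof of Theorem~\ref{Theo:HilbertPositiveDef}.

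\emph{Item~1.} Fix $\varphi\in\Spc S_{\V p}$. Clause~(ii) supplies a $|\cdot|_{\Op B}$-Cauchy sequence $(\psi_k)\subset\Spc S$ with $\pi\psi_k\to\pi\Op A\varphi$ in $\dot{\Spc H}_{\Op B}$ and $\Op B\psi_k\to\varphi$ in $\Spc S'$. For $\psi\in\Spc S$, the symmetry of $\Op B$ gives
\begin{align*}
\langle\varphi,\psi\rangle=\lim_k\langle\Op B\psi_k,\psi\rangle
=\lim_k\langle\pi\psi_k,\pi\psi\rangle_{\dot{\Spc H}_{\Op B}}
=\langle\pi\Op A\varphi,\pi\psi\rangle_{\dot{\Spc H}_{\Op B}},
\end{align*}
since $\langle\Op B\psi_k,\psi\rangle$ is exactly the $\dot{\Spc H}_{\Op B}$ inner product of the two classes. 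Cauchy--Schwarz in $\dot{\Spc H}_{\Op B}$, together with the isometry $\|\pi\Op A\varphi\|_{\dot{\Spc H}_{\Op B}}=|\varphi|_{\Op A}$, then yields the duality bound $|\langle\varphi,\psi\rangle|\le|\varphi|_{\Op A}|\psi|_{\Op B}$.

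\emph{Item~2.} If $(\varphi_j)\subset\Spc S_{\V p}$ is $|\cdot|_{\Op A}$-Cauchy, Item~1 gives $|\langle\varphi_j-\varphi_i,\psi\rangle|\le|\varphi_j-\varphi_i|_{\Op A}|\psi|_{\Op B}$. Hence the limit exists for every $\psi\in\Spc S$, and it vanishes for null sequences, so it depends only on $h$. The functional $\psi\mapsto\langle h,\psi\rangle$ satisfies $|\langle h,\psi\rangle|\le\|h\|_{\Spc H_{\Op A}}|\psi|_{\Op B}$. Because $\Op B:\Spc S\to\Spc S'$ is continuous, $\psi\mapsto\langle\Op B\psi,\psi\rangle$ is a continuous quadratic form, so $|\cdot|_{\Op B}$ is a continuous seminorm on $\Spc S$. (Here one can invoke the barrelled property of the Montel space $\Spc S$: joint continuity of a separately continuous bilinear form on a Fréchet/barrelled space.) Thus $h$ defines an element of $\Spc S'$. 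Linearity is clear. Weak-$\ast$ continuity follows from the same bound, and in fact the map is continuous from the norm topology into the strong dual on $|\cdot|_{\Op B}$-bounded sets.

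\emph{Injectivity} is the key point. If $\langle h,\psi\rangle=0$ for all $\psi$, extend Item~1 by continuity to $\langle h,\psi\rangle=\langle\pi\Op A h,\pi\psi\rangle_{\dot{\Spc H}_{\Op B}}$. Since $\pi\Spc S$ is dense in $\dot{\Spc H}_{\Op B}$, this forces $\pi\Op A h=0$. Since $\pi\Op A$ is an isometric isomorphism, $h=0$.

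\emph{Item~3.} This is the mirror argument. For $g$ represented by $(\psi_k)$ and $\varphi\in\Spc S_{\V p}$, we have $|\langle\psi_k-\psi_l,\varphi\rangle|\le|\varphi|_{\Op A}|\psi_k-\psi_l|_{\Op B}$, which yields a well-defined limit and a functional bounded by $\|g\|\,|\varphi|_{\Op A}$ on $\Spc S_{\V p}$. To obtain an element of $(\Spc S_{\V p})'$, I need continuity of $|\cdot|_{\Op A}$ on $\Spc S_{\V p}$ with the relative topology; this holds by the same continuity argument for $\Op A$. The identification $(\Spc S_{\V p})'\cong\Spc S'/\Spc N_{\V p}$ is Hahn--Banach plus the bipolar theorem: the annihilator of $\Spc S_{\V p}$ is $\overline{\Spc N_{\V p}}^{w\ast}=\Spc N_{\V p}$, which is closed since it is finite-dimensional.

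For injectivity, the limit equals $\langle\Op B g,\varphi\rangle$ paired inside $\Spc H_{\Op A}$: it equals $\langle\Op B g,\varphi\rangle_{\Spc H_{\Op A}}$, by continuity from $\langle\Op B\psi_k,\varphi\rangle_{\Spc H_{\Op A}}=\langle\psi_k,\varphi\rangle$. That identity in turn follows from Item~1 applied with the roles exchanged via the inverse isometries. Density of $\Spc S_{\V p}$ in $\Spc H_{\Op A}$ and invertibility of $\Op B$ then give $g=0$.

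\emph{Main obstacle.} I expect the hardest step to be this last identification, namely establishing $\langle\Op B\psi,\varphi\rangle_{\Spc H_{\Op A}}=\langle\psi,\varphi\rangle$. I would prove it by writing $\Op B\psi=\lim\varphi_j$ in $\Spc H_{\Op A}$. The inverse relation $\pi\Op A\Op B\psi=\pi\psi$ then gives $\langle\varphi,\psi\rangle=\langle\pi\Op A\varphi,\pi\Op A\Op B\psi\rangle=\langle\varphi,\Op B\psi\rangle_{\Spc H_{\Op A}}$, by polarization of the isometry $\pi\Op A$.
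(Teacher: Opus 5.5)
Your proposal is correct and follows the paper's proof essentially step by step: the duality bound comes from Clause~(ii) via the symmetric $\Op B$-form, the limits use the continuous seminorms $|\cdot|_{\Op B}$ and $|\cdot|_{\Op A}$, and the injectivity argument in $\Spc H_{\Op A}$ is the same density-plus-isometry argument. The only deviation is the injectivity in Item~3, where you route through $\Op B g\in\Spc H_{\Op A}$ and polarization of the isometry; the paper instead extends the identity of Item~1 directly to $\langle g,\varphi\rangle=\langle g,\pi\Op A\varphi\rangle_{\dot{\Spc H}_{\Op B}}$ and uses the density of $\pi\Op A(\Spc S_{\V p})$ in $\dot{\Spc H}_{\Op B}$, so the step you flag as the main obstacle is in fact immediate.
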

\begin{proof}
\emph{Duality bound}:
Let $\varphi\in\Spc S_{\V p}$ and $\psi\in\Spc S$. Clause (ii) provides a
$|\cdot|_{\Op B}$-Cauchy sequence $(\psi_k)\subset\Spc S$ that represents $\pi\Op A\varphi$ and
satisfies $\Op B\psi_k\to\varphi$ in $\Spc S'$. Therefore
\begin{align*}
\langle\varphi,\psi\rangle=\lim_{k\to\infty}\langle\Op B\psi_k,\psi\rangle,
\qquad
|\langle\Op B\psi_k,\psi\rangle|\le|\psi_k|_{\Op B}\,|\psi|_{\Op B},
\end{align*}
where the inequality is the Cauchy--Schwarz inequality for the $\Op B$-form. In the limit,
$|\psi_k|_{\Op B}\to\|\pi\Op A\varphi\|_{\dot{\Spc H}_{\Op B}}=|\varphi|_{\Op A}$, where the last
equality is the isometry property of Clause (ii). This proves the bound. The same computation
identifies the pairing: the $\Op B$-form extends continuously to $\dot{\Spc H}_{\Op B}$, which gives
$\lim_k\langle\Op B\psi_k,\psi\rangle=\langle\pi\Op A\varphi,\pi\psi\rangle_{\dot{\Spc H}_{\Op B}}$.

\medskip
\emph{Realization of $\Spc H_{\Op A}$}:
The bound of Item 1 gives
$|\langle\varphi_j-\varphi_i,\psi\rangle|\le|\varphi_j-\varphi_i|_{\Op A}\,|\psi|_{\Op B}$, so that
the scalar sequence $(\langle\varphi_j,\psi\rangle)_j$ is Cauchy and its limit vanishes for every
null sequence: the limit exists and depends only on $h$. In the limit,
$|\langle h,\psi\rangle|\le\|h\|_{\Spc H_{\Op A}}\,|\psi|_{\Op B}$ for every $\psi\in\Spc S$. Since
$\Op B$ is continuous, $|\cdot|_{\Op B}$ is a continuous seminorm on $\Spc S$, so that
$\langle h,\cdot\rangle\in\Spc S'$; the same bound makes the realization map weak-$\ast$ continuous.
For the injectivity, let us suppose that $\langle h,\psi\rangle=0$ for every $\psi\in\Spc S$, and
let $\Op J$ denote the isometric extension (Riesz map) of $\pi\Op A$ to $\Spc H_{\Op A}$. The
identity of Item 1 passes to the limit and gives
$\langle h,\psi\rangle=\langle\Op J h,\pi\psi\rangle_{\dot{\Spc H}_{\Op B}}$, so that the class
$\Op J h$ is orthogonal to $\pi(\Spc S)$, which is dense in $\dot{\Spc H}_{\Op B}$. Hence
$\Op J h=0$ and, by the isometry, $h=0$.

\medskip
\emph{Realization of $\dot{\Spc H}_{\Op B}$}:
The preceding argument applies with the roles of $(\Spc S_{\V p},|\cdot|_{\Op A})$ and
$(\Spc S,|\cdot|_{\Op B})$ exchanged: the bound of Item 1 makes the limit $\langle g,\varphi\rangle$
exist and depend only on $g$, with
$|\langle g,\varphi\rangle|\le\|g\|_{\dot{\Spc H}_{\Op B}}\,|\varphi|_{\Op A}$, and the continuity of
$\Op A$ gives $\langle g,\cdot\rangle\in(\Spc S_{\V p})'$. Two points are specific to this side.
First, the dual $(\Spc S_{\V p})'$ identifies with $\Spc S'/\Spc N_{\V p}$, because
$\Spc S_{\V p}^\perp=\Spc N_{\V p}$ is finite-dimensional, hence weak-$\ast$ closed. Second, the
injectivity follows from
$\langle g,\varphi\rangle=\langle g,\pi\Op A\varphi\rangle_{\dot{\Spc H}_{\Op B}}$ and from the
density of $\pi\Op A(\Spc S_{\V p})$ in $\dot{\Spc H}_{\Op B}$, which holds because $\pi\Op A$ is an
isometric isomorphism and $\Spc S_{\V p}$ is dense in $\Spc H_{\Op A}$.
\end{proof}

The two clauses of Definition~\ref{Def:Intrinsic} may look independent, but they are not: each is
equivalent to the single requirement that $\V p$ span the attained null space, and to the minimality
of the conditioning.

\begin{theorem}[Attainable null space, conditional invertibility, and minimality]
\label{Thm:AttainableNull}
Let $\V p\subset\Spc S'$ be a linearly independent family, let $\Op B:\Spc S\to\Spc S'$ be
symmetric positive-definite with attainable null space $\Spc N_{\Op B}$
(Definition~\ref{Def:AttainableNull}), and let $\Op A:\Spc S\to\Spc S'$ be a continuous
operator that inverts $\Op B$ through the Hilbertian extensions, in the sense that
$\Op A\Op B=\Identity$ on $\Spc S$ and $\Op B\Op A=\Identity$ on $\Spc S_{\V p}$. Then, the
following statements are equivalent.
\begin{enumerate}
\item[(i)] \emph{(Attainability)} $\Spc N_{\Op B}=\Spc N_{\V p}$.
\item[(ii)] \emph{(Invertibility through completion)} $(\Op A,\Op B)$ is a $\V p$-conditional Riesz
pair (Definition~\ref{Def:Intrinsic}); equivalently, the Hilbertian extensions of $\pi\Op A$ and
$\Op B$ are mutually inverse isometries between $\Spc H_{\Op A}$ and $\dot{\Spc H}_{\Op B}$.
\item[(iii)] \emph{(Minimality)} The operator $\Op A$ is $\V p$-conditionally positive-definite and
its finiteness domain is the detrended space,
\begin{align*}
\Spc S_{\V p}=\Spc N^\perp_\Op B=\{\varphi\in\Spc S:\ \pi\Op A\varphi\in\dot{\Spc H}_{\Op B}\},
\end{align*}
which is the largest subspace of $\Spc S$ on which the image $\Op A\varphi$ has finite energy. No
smaller conditioning is admissible: whenever $\Spc N_{\V q}\not\supseteq\Spc N_{\Op B}$, the space
$\Spc S_{\V q}$ contains test functions whose image leaves $\dot{\Spc H}_{\Op B}$.
\end{enumerate}
When they hold, $\Spc N_{\Op B}=\Spc N_{\V p}$ is the trend space of the theory, and
$\langle\Op A\varphi,\varphi\rangle=\|\pi\Op A\varphi\|_{\dot{\Spc H}_{\Op B}}^2=|\varphi|_{\Op A}^2$
for every $\varphi\in\Spc S_{\V p}$.
\end{theorem}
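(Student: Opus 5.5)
The plan is to prove the cycle (i)$\Rightarrow$(ii)$\Rightarrow$(iii)$\Rightarrow$(i). The engine throughout is a single identity. For $\varphi\in\Spc S_{\V p}$, the relation $\Op B\Op A\varphi=\varphi$ and the symmetry of $\Op B$ give
\begin{align*}
\langle\Op A\varphi,\varphi\rangle=\langle\Op A\varphi,\Op B\Op A\varphi\rangle=|\Op A\varphi|_{\Op B}^2 ,
\end{align*}
interpreted through the Hilbertian extension of $\Op B$. Concretely, I will pick $\psi_k\in\Spc S$ with $|\psi_k-\Op A\varphi|_{\Op B}\to 0$ in the completed sense and with $\Op B\psi_k\to\varphi$. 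Then I pass to the limit in $\langle\Op B\psi_k,\psi_k\rangle$ and in $\langle\Op B\psi_k,\Op A\varphi\rangle$. This yields the final identity of the theorem as soon as $\pi\Op A\varphi\in\dot{\Spc H}_{\Op B}$. It also yields conditional positive-definiteness: if $|\varphi|_{\Op A}=0$, the class $\pi\Op A\varphi$ vanishes, so $\varphi=\Op B\Op A\varphi=0$. Here I use that the extension of $\Op B$ kills null classes, by the Cauchy--Schwarz remark after Definition~\ref{Def:Intrinsic}.

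\emph{(i)$\Rightarrow$(ii).} Assume $\Spc N_{\Op B}=\Spc N_{\V p}$. The first task is to show that $\pi\Op A\varphi\in\dot{\Spc H}_{\Op B}$ for $\varphi\in\Spc S_{\V p}$. I will use duality: the functional $\psi\mapsto\langle\varphi,\psi\rangle$ vanishes on $\Spc N_{\Op B}$-approximating sequences, because $\varphi\in\Spc N_{\Op B}^\perp$. The hardest point is upgrading this to a bound $|\langle\varphi,\psi\rangle|\le C|\psi|_{\Op B}$.

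\begin{itemize}
\item First attempt: I would try to get the bound directly from the factorization $\langle\varphi,\psi\rangle=\langle\Op A\varphi,\Op B\psi\rangle$ and the Cauchy--Schwarz inequality for the $\Op B$-form, approximating $\Op A\varphi$ by test functions.
\item Fallback: if that fails, argue by contradiction. Take $\psi_k$ with $|\psi_k|_{\Op B}\to0$ and $\langle\varphi,\psi_k\rangle=1$. Then use the Montel/nuclear structure to extract a weak-$\ast$ cluster point of a normalized sequence that lies in $\Spc N_{\Op B}\setminus\Spc N_{\V p}$.
\end{itemize}

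Once the bound holds, the Riesz theorem on $\dot{\Spc H}_{\Op B}$ represents $\varphi$ by an element $g$. The relation $\Op A\Op B=\Identity$ identifies $g$ with $\pi\Op A\varphi$, and the identity above gives the isometry $\|\pi\Op A\varphi\|=|\varphi|_{\Op A}$. Extending by density gives $\pi\Op A:\Spc H_{\Op A}\to\dot{\Spc H}_{\Op B}$. For surjectivity, and for $\Op B$ mapping $\dot{\Spc H}_{\Op B}$ isometrically onto $\Spc H_{\Op A}$, I will use $\Op A\Op B=\Identity$ on $\Spc S$ together with the density of $\pi(\Spc S)$. The one remaining point is that $\Op B\psi$ lies in $\Spc H_{\Op A}$, i.e.\ is annihilated by $\Spc N_{\V p}=\Spc N_{\Op B}$. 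This follows from $\langle\Op B\psi,\chi_k\rangle\le|\psi|_{\Op B}|\chi_k|_{\Op B}\to0$ for attainable sequences $\chi_k$.

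\emph{(ii)$\Rightarrow$(iii).} The domain identity $\{\varphi:\pi\Op A\varphi\in\dot{\Spc H}_{\Op B}\}=\Spc S_{\V p}$ has two directions.
\begin{itemize}
\item Inclusion $\supseteq$: this comes from Clause~(ii).
\item Inclusion $\subseteq$: if $\pi\Op A\varphi\in\dot{\Spc H}_{\Op B}$, then $\varphi=\Op B\Op A\varphi$ is annihilated by attainable sequences, as in the previous step. Hence $\varphi\in\Spc N_{\Op B}^\perp=\Spc N_{\V p}^\perp$, using Clause~(i) and the bipolar theorem for the finite-dimensional space $\Spc N_{\V p}$.
\end{itemize}
For minimality, suppose $\Spc N_{\V q}\not\supseteq\Spc N_{\Op B}$. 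Then $\Spc S_{\V q}\not\subseteq\Spc N_{\Op B}^\perp$, by the bipolar theorem, since $\Spc N_{\V q}$ is weak-$\ast$ closed. So some $\varphi\in\Spc S_{\V q}$ lies outside the finiteness domain.

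\emph{(iii)$\Rightarrow$(i).} From $\Spc S_{\V p}=\Spc N_{\Op B}^\perp$, take annihilators and apply the bipolar theorem. This gives $\Spc N_{\V p}=\overline{\Spc N_{\Op B}}^{\,w\ast}$. Closedness of $\Spc N_{\Op B}$ should follow from its characterization as the annihilator of the finiteness domain (the inclusion from the previous step). That gives $\Spc N_{\Op B}=\Spc N_{\V p}$.

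The main obstacle I expect is the bound in (i)$\Rightarrow$(ii). Deriving finite energy of $\Op A\varphi$ from mere annihilation of attainable drifts needs a closed-range or compactness argument. I would try the contradiction via Montel sequential compactness first.
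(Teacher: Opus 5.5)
Your overall architecture matches the paper's: the identity $\langle\Op A\varphi,\varphi\rangle=\|\pi\Op A\varphi\|^2_{\dot{\Spc H}_{\Op B}}$, the bipolar step, and the minimality argument. There is, however, a genuine gap exactly at the step you single out. In (i)$\Rightarrow$(ii) you never establish that $\pi\Op A\varphi\in\dot{\Spc H}_{\Op B}$, i.e.\ that $|\langle\varphi,\psi\rangle|\le C|\psi|_{\Op B}$, for $\varphi\in\Spc S_{\V p}$, and neither of your two routes can deliver it.

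The first route is circular. Approximating $\Op A\varphi$ by test functions in $|\cdot|_{\Op B}$ \emph{is} the statement $\pi\Op A\varphi\in\dot{\Spc H}_{\Op B}$, and $\langle\Op A\varphi,\Op B\psi\rangle$ pairs two distributions.

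The fallback uses only $\Spc N_{\Op B}$ and the Montel property, and these cannot imply the bound. Take $\Spc S=\Spc S(\R)$, $N_0=0$, and let $\Op B$ be the multiplier with symbol $\ee^{-\omega^2}$, which is symmetric positive-definite. If $|\chi_k|_{\Op B}\to0$ and $\chi_k\to p$ in $\Spc S'$, then $\ee^{-\omega^2/2}\hat\chi_k\to0$ in $L_2$. It also tends to $\ee^{-\omega^2/2}\hat p$ in $\Spc S'$, because multiplication by $\ee^{-\omega^2/2}\in\Spc O_M$ is continuous there. Hence $p=0$ and $\Spc N_{\Op B}=\{0\}=\Spc N_{\V p}$. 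Yet, for $\hat\varphi=\mathrm{sech}$, the dual energy $\frac{1}{2\pi}\int_\R\mathrm{sech}^2(\omega)\,\ee^{\omega^2}\dint\omega$ is infinite. In your contradiction scheme this shows up as follows: $(\psi_k)$ need not be bounded in $\Spc S'$. After any rescaling that restores boundedness, the cluster point pairs to $0$ with $\varphi$ and lies harmlessly in $\Spc N_{\Op B}$.

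The missing ingredient is $\Op A$ itself. In the example no admissible $\Op A$ exists: the natural candidate has symbol $\ee^{\omega^2}$ and does not map $\Spc S$ into $\Spc S'$. The paper never extracts the finiteness from the geometry of $\Spc N_{\Op B}$; the Bridge takes it from the inversion relations, read through the Hilbertian extensions. By the convention of Section~\ref{Sec:HilbertGelfand} and the comment after \eqref{Eq:QuasiInverse}, ``$\Op B\Op A=\Identity$ on $\Spc S_{\V p}$'' already means two things: $\pi\Op A\varphi$ lies in the domain $\dot{\Spc H}_{\Op B}$ of the extended $\Op B$, and it is mapped to $\varphi$. Hence $\langle\varphi,\psi\rangle=\langle\pi\Op A\varphi,\pi\psi\rangle_{\dot{\Spc H}_{\Op B}}$, and the bound holds with $C=\|\pi\Op A\varphi\|_{\dot{\Spc H}_{\Op B}}$. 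Clause (i) only serves to identify $\Spc S_{\V p}$ with $\Spc N_{\Op B}^\perp$.

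The same under-reading of the hypotheses recurs twice:
\begin{enumerate}
\item $\Op B\psi\in\Spc H_{\Op A}$ is part of the hypothesis $\Op A\Op B=\Identity$ through the extensions. Annihilating $\Spc N_{\V p}$ does not by itself place $\Op B\psi$ in $\Spc H_{\Op A}$.
\item In (ii)$\Rightarrow$(iii) you write $\varphi=\Op B\Op A\varphi$ for $\varphi$ in the finiteness domain. That identity is only available on $\Spc S_{\V p}$, the very set you are trying to reach. The paper instead identifies the finiteness domain with $\{\|\varphi\|_{\Op A}^\ast<\infty\}$ via $\Op A\Op B=\Identity$ on all of $\Spc S$. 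It then lets the attainable sequences make $|\langle\varphi,\chi_k\rangle|/|\chi_k|_{\Op B}$ diverge.
\end{enumerate}

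Two smaller repairs are needed. In the engine identity, pass to the limit in $\langle\varphi,\psi_k\rangle$ rather than in $\langle\Op B\psi_k,\Op A\varphi\rangle$; the former is continuous on $\Spc S'/\Spc N_{\Op B}$ because $\varphi\in\Spc S_{\V p}$. In (iii)$\Rightarrow$(i), $\Spc N_{\Op B}$ is closed simply because it is a subspace of the finite-dimensional space $\overline{\Spc N_{\Op B}}^{\,w\ast}=\Spc N_{\V p}$.
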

\begin{proof}
\emph{(i)$\Leftrightarrow$(ii)}:
Let $\varphi\in\Spc S_{\V p}$. By the Bridge below, $\Spc N_{\Op B}^\perp=\Spc S_{\V p}$, so that
$\pi\Op A\varphi\in\dot{\Spc H}_{\Op B}$ is represented by some $|\cdot|_{\Op B}$-Cauchy sequence
$(\psi_k)\subset\Spc S$ with $\pi\psi_k\to\pi(\Op A\varphi)$. As $\Op A\varphi\in\Spc S'$, the relation
$\Op B\Op A=\Identity$ on $\Spc S_{\V p}$ then translates into $\Op B\psi_k\to\varphi$ in $\Spc S'$, which completes the
sequential reading of Clause~(ii). For the isometry, the Cauchy--Schwarz bound
$|\langle\Op B\psi_k-\Op B\psi_j,\psi_k\rangle|\le|\psi_k-\psi_j|_{\Op B}|\psi_k|_{\Op B}$ gives
$\langle\Op B\psi_k,\psi_k\rangle-\langle\varphi,\psi_k\rangle\to0$.
Since $\varphi\in\Spc S_{\V p}$, the weak-$\ast$ continuous functional $f\mapsto\langle \varphi,f\rangle$
vanishes on $\Spc N_{\V p}=\Spc N_{\Op B}$, so that it descends to a continuous functional on
$\Spc S'/\Spc N_{\Op B}$. The convergence $\pi\psi_k\to\pi(\Op A\varphi)$ therefore gives
$\langle\varphi,\psi_k\rangle\to\langle\Op A\varphi,\varphi\rangle$.
Hence
\begin{align*}
\|\pi\Op A\varphi\|_{\dot{\Spc H}_{\Op B}}^2=\lim_{k\to\infty}|\psi_k|_{\Op B}^2
=\langle\Op A\varphi,\varphi\rangle=|\varphi|_{\Op A}^2 .
\end{align*}
The reverse composition follows from $\Op A\Op B=\Identity$ on $\Spc S$, which extends to
$\dot{\Spc H}_{\Op B}$ by density of $\pi(\Spc S)$. The converse is immediate, as Clause~(i) is part
of Definition~\ref{Def:Intrinsic}.

\medskip
\emph{Bridge}:
For $\varphi\in\Spc S$, write
$\|\varphi\|_{\Op A}^\ast\eqdef\sup_{\psi\in\Spc S\setminus\{0\}}|\langle\varphi,\psi\rangle|/|\psi|_{\Op B}\in[0,+\infty]$
for the dual norm induced by $|\cdot|_{\Op B}$. On $\Spc S_{\V p}$, it coincides with the 
seminorm $|\varphi|_{\Op A}=\sqrt{\langle\Op A\varphi,\varphi\rangle}$ of
Definition~\ref{Def:CondPositiveDef2} (Proposition~\ref{Prop:Realization}), whereas on the remainder
of $\Spc S$ the two need not agree and $\|\varphi\|_{\Op A}^\ast$ may be infinite. The image
$\Op A\varphi$ (or rather $\pi\Op A \varphi$) lies in $\dot{\Spc H}_{\Op B}$ if and only if the functional
$\psi\mapsto\langle\varphi,\psi\rangle$ is $|\cdot|_{\Op B}$-bounded; that is,
$\|\varphi\|_{\Op A}^\ast<\infty$; and this holds if and only if $\varphi\in\Spc N_{\Op B}^\perp$.
Indeed, if $\langle\varphi,p\rangle\neq0$ for some $p\in\Spc N_{\Op B}$, then
$p=\lim_k\chi_k$ with $|\chi_k|_{\Op B}\to0$, so that the quotient
$|\langle\varphi,\chi_k\rangle|/|\chi_k|_{\Op B}$ diverges; conversely, the inverse relation
$\Op A\Op B=\Identity$ on $\Spc S$ identifies $\|\varphi\|_{\Op A}^\ast$ with
$\|\pi\Op A\varphi\|_{\dot{\Spc H}_{\Op B}}$, which is finite exactly on $\Spc N_{\Op B}^\perp$.
Therefore
\begin{align*}
\{\varphi\in\Spc S:\ \pi\Op A\varphi\in\dot{\Spc H}_{\Op B}\}=\Spc N_{\Op B}^\perp .
\end{align*}

\medskip
\emph{(i)$\Rightarrow$(iii)}:
Let us assume that $\Spc N_{\Op B}=\Spc N_{\V p}$, so that
$\Spc S_{\V p}=\Spc N_{\Op B}^\perp$ is the finiteness domain identified in the Bridge.
For $\varphi\in\Spc S_{\V p}$, the relation $\Op B\Op A=\Identity$ gives
\begin{align*}
\langle\Op A\varphi,\varphi\rangle
=\langle\Op A\varphi,\Op B\Op A\varphi\rangle
=\|\pi\Op A\varphi\|_{\dot{\Spc H}_{\Op B}}^2\ \ge\ 0 .
\end{align*}
This quantity vanishes only for $\varphi=0$: the identity
$\|\pi\Op A\varphi\|_{\dot{\Spc H}_{\Op B}}=0$ means that $\Op A\varphi\in\Spc N_{\Op B}$, which the
extension of $\Op B$ annihilates (see the discussion that follows
Definition~\ref{Def:Intrinsic}), from which we get $\varphi=\Op B\Op A\varphi=0$. The operator
$\Op A$ is therefore $\V p$-conditionally positive-definite.

\medskip
\emph{(iii)$\Rightarrow$(i)}:
The Bridge identifies the finiteness domain of~(iii) as $\Spc N_{\Op B}^\perp$, so that the identity
of~(iii) reads $\Spc N_{\V p}^\perp=\Spc N_{\Op B}^\perp$. Let us apply $\perp$ once more: the
bipolar theorem gives $\overline{\Spc N_{\V p}}^{\,w\ast}=\overline{\Spc N_{\Op B}}^{\,w\ast}$. The
space $\Spc N_{\V p}$ is finite-dimensional, hence weak-$\ast$ closed, which yields
$\Spc N_{\V p}=\overline{\Spc N_{\Op B}}^{\,w\ast}\supseteq\Spc N_{\Op B}$. The attainable null
space $\Spc N_{\Op B}$ is then contained in a finite-dimensional space, so that it is itself
finite-dimensional and weak-$\ast$ closed. We conclude that $\Spc N_{\Op B}=\Spc N_{\V p}$, which
is~(i). Let us stress that the finite-dimensionality of $\Spc N_{\Op B}$ is not an assumption here:
it is a consequence of the finiteness-domain identity of~(iii).

\medskip
\emph{Minimality}:
Let us assume that $\Spc N_{\V q}\not\supseteq\Spc N_{\Op B}$ and pick some
$p\in\Spc N_{\Op B}\setminus\Spc N_{\V q}$. Since $\Spc S$ separates the points of $\Spc S'$ and
$\Spc N_{\V q}$ is finite-dimensional, there is a test function
$\varphi\in\Spc N_{\V q}^\perp=\Spc S_{\V q}$ with $\langle\varphi,p\rangle\neq0$. Such a
$\varphi$ lies outside $\Spc N_{\Op B}^\perp$, which is precisely the region identified in the
Bridge: there, $\Op A\varphi$ leaves $\dot{\Spc H}_{\Op B}$ and the dual norm
$\|\varphi\|_{\Op A}^\ast$ diverges. No conditioning smaller than $\Spc N_{\V p}=\Spc N_{\Op B}$ is
therefore admissible.
\end{proof}

The important outcome of this section is that \emph{the realization determines the conditioning}.
Proposition~\ref{Prop:Realization} realizes $\Spc H_{\Op A}$ as a genuine subspace of $\Spc S'$ but
the energy space $\dot{\Spc H}_{\Op B}$ only in the quotient $\Spc S'/\Spc N_{\Op B}$, and
Theorem~\ref{Thm:AttainableNull} turns this asymmetry into a rigidity statement: the embedding
$\Spc H_{\Op A}\hookrightarrow\Spc S'$ carries the structure of Definition~\ref{Def:Intrinsic} if
and only if $\Spc N_{\V p}=\Spc N_{\Op B}$. A smaller $\V p$ violates the positivity of
$|\cdot|_{\Op A}$, a larger one violates the attainability, and in both cases $(\Op A,\Op B)$ is no
longer a conditional Riesz pair. Once the conditioning is fixed, the concrete Hilbert--Gelfand
spaces that we construct in the next section are themselves determined by $(\Op A,\Op B)$, up to an
equivalence of norms (Corollary~\ref{Cor:PhiIndependence}).

\subsection{Concrete Realization of Hilbert--Gelfand Spaces}

Our Definition~\ref{Def:Intrinsic} of conditional Riesz pairs involves only $\V p\in (\Spc S')^{N_0}$, $\Op A$ and $\Op B$. It is abstract because the induced space $\dot{\Spc H}_{\Op B}$ is formed of equivalence classes of distributions.
We now
show that through the selection of a biorthogonal family $\V \phi \in\Spc S^{N_0}$, we can define an oblique projector that isometrically maps the abstract space $\dot{\Spc H}_{\Op B}$ into  the primary component of a concrete Hilbert--Gelfand space. We start with the existence of such a family.

\begin{lemma}[Biorthogonal systems and projectors]
\label{Lem:Biorthogonal}
Let $\V p=(p_1,\dots,p_{N_0})$ be a linearly independent family in $\Spc S'$.
\begin{enumerate}
\item There is a family $\V \phi=(\phi_1,\dots,\phi_{N_0})$ in $\Spc S$ that is biorthogonal to $\V p$, in
the sense that $\langle p_i,\phi_j\rangle=\delta_{ij}$ for all $i,j$.
\item For any such $\V \phi$, we form the rank-$N_0$ operators
\begin{align*}
\Op R_{\V p}\eqdef\sum_{n=1}^{N_0} p_n\,\langle p_n,\cdot\rangle:\Spc S\to\Spc S',
\qquad
\Op R_{\V \phi}\eqdef\sum_{n=1}^{N_0} \phi_n\,\langle \cdot,\phi_n\rangle:\Spc S'\to\Spc S,
\end{align*}
which satisfy $\Op R_{\V p}\varphi=0$ for every $\varphi\in\Spc S_{\V p}$ and
$\Op R_{\V p}\Op R_{\V \phi}\Op R_{\V p}=\Op R_{\V p}$. The operator
$\Op P_{\V \phi}\eqdef\Identity-\Op R_{\V \phi}\Op R_{\V p}$ is a continuous projector of $\Spc S$ onto $\Spc S_{\V p}$
obliquely along $\Spc N_{\V \phi}\eqdef\mathrm{span}(\V \phi)$, so that $\Spc S=\Spc S_{\V p}\oplus\Spc N_{\V \phi}$.
Its adjoint $\Op P_{\V \phi}^\ast=\Identity-\Op R_{\V p}\Op R_{\V \phi}$ is a projector of $\Spc S'$ onto
$\Spc S'_{\V \phi}\eqdef\{f\in\Spc S':\langle f,\phi_k\rangle=0,\ k=1,\dots,N_0\}$ along $\Spc N_{\V p}$.
\end{enumerate}
\end{lemma}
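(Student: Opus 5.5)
For Item 1, the plan is to use that $\Spc S$ separates the points of $\Spc S'$. This holds because $(\Spc S')'=\Spc S$ by reflexivity, so the weak-$\ast$ topology is Hausdorff. Consider the linear map $T:\Spc S\to\R^{N_0}$, $\varphi\mapsto\V p(\varphi)=(\langle p_1,\varphi\rangle,\dots,\langle p_{N_0},\varphi\rangle)$.

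I will show that $T$ is surjective. If its range were a proper subspace of $\R^{N_0}$, there would be a nonzero $\V c\in\R^{N_0}$ orthogonal to it. Then $\langle\sum_n c_np_n,\varphi\rangle=0$ for every $\varphi\in\Spc S$, so $\sum_nc_np_n=0$ in $\Spc S'$, contradicting the linear independence of $\V p$.

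Surjectivity lets me pick $\phi_j\in\Spc S$ with $T\phi_j=\V e_j$, the $j$-th canonical basis vector. This is exactly $\langle p_i,\phi_j\rangle=\delta_{ij}$. I expect this step to carry the only genuine content of the lemma, namely the separation property. The rest is finite-rank linear algebra.

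For Item 2, I will first note continuity. $\Op R_{\V p}$ is a finite sum of the continuous rank-one maps $\varphi\mapsto p_n\langle p_n,\varphi\rangle$ from $\Spc S$ to $\Spc S'$. $\Op R_{\V\phi}$ is a finite sum of the maps $f\mapsto\phi_n\langle f,\phi_n\rangle$, each of which is weak-$\ast$ continuous into the finite-dimensional span of $\V\phi$ in $\Spc S$.

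The first identity, $\Op R_{\V p}\varphi=0$ on $\Spc S_{\V p}$, is immediate from the definition of $\Spc S_{\V p}$. For the second, biorthogonality gives
\begin{align*}
\Op R_{\V p}\Op R_{\V\phi}\Op R_{\V p}\varphi=\sum_{n,m,k}p_k\,\langle p_k,\phi_m\rangle\langle p_m,\phi_n\rangle\langle p_n,\varphi\rangle=\sum_n p_n\langle p_n,\varphi\rangle .
\end{align*}

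Since $\Op R_{\V p}$ and $\Op R_{\V\phi}$ use the Gram-type weights $\langle p_n,\cdot\rangle$, the composition that actually matters is $\Op R_{\V\phi}\Op R_{\V p}$. It should be read as the map $\varphi\mapsto\sum_n\phi_n\langle p_n,\varphi\rangle$, the oblique "moment-matching" map. I will verify that $\V p(\Op R_{\V\phi}\Op R_{\V p}\varphi)=\V p(\varphi)$ using $\langle p_i,\phi_n\rangle=\delta_{in}$. From this:
\begin{itemize}
\item $\V p(\Op P_{\V\phi}\varphi)=\V 0$, so $\Op P_{\V\phi}$ maps $\Spc S$ into $\Spc S_{\V p}$;
\item $\Op P_{\V\phi}\varphi=\varphi$ for $\varphi\in\Spc S_{\V p}$, so $\Op P_{\V\phi}^2=\Op P_{\V\phi}$;
\item the kernel of $\Op P_{\V\phi}$ is $\Spc N_{\V\phi}$, since $\Op P_{\V\phi}\phi_j=\phi_j-\phi_j=0$ and any $\varphi$ in the kernel equals $\sum_n\phi_n\langle p_n,\varphi\rangle\in\Spc N_{\V\phi}$.
\end{itemize}
The direct sum $\Spc S=\Spc S_{\V p}\oplus\Spc N_{\V\phi}$ follows from the decomposition $\varphi=\Op P_{\V\phi}\varphi+(\Identity-\Op P_{\V\phi})\varphi$. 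Uniqueness comes from $\Spc S_{\V p}\cap\Spc N_{\V\phi}=\{0\}$: if $\sum c_n\phi_n$ has vanishing moments, then $\V c=\V 0$. Continuity of $\Op P_{\V\phi}$ follows from that of the finite-rank part.

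For the adjoint, I will compute $\langle\Op P_{\V\phi}^\ast f,\varphi\rangle=\langle f,\varphi\rangle-\sum_n\langle f,\phi_n\rangle\langle p_n,\varphi\rangle$. This gives $\Op P_{\V\phi}^\ast f=f-\sum_np_n\langle f,\phi_n\rangle$, which is $\Identity-\Op R_{\V p}\Op R_{\V\phi}$ read in the same oblique sense. The symmetric checks then go through: $\langle\Op P_{\V\phi}^\ast f,\phi_k\rangle=0$, the map is the identity on $\Spc S'_{\V\phi}$, and it kills exactly $\Spc N_{\V p}$. These establish the projector onto $\Spc S'_{\V\phi}$ along $\Spc N_{\V p}$.

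The main point to handle carefully is the notational identification of the compositions with the biorthogonal sums $\sum_n\phi_n\langle p_n,\cdot\rangle$ and $\sum_np_n\langle\cdot,\phi_n\rangle$. I will make it explicit, because it is what makes the projector identities exact.
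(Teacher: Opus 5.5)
Your proposal is correct and follows the paper's proof in substance. The paper gets each $\phi_j$ separately, by applying the kernel-intersection lemma to $p_j$ and $\bigcap_{i\neq j}\ker p_i$; you get all of $\V\phi$ at once from the surjectivity of $\varphi\mapsto\V p(\varphi)$ onto $\R^{N_0}$, which is the same finite-dimensional fact. Item 2 is the same direct computation in both, and you even add the continuity check that the paper leaves implicit.

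Two small remarks. The separation you attribute to reflexivity is automatic, since an element of $\Spc S'$ that vanishes on every $\varphi\in\Spc S$ is zero by definition. Likewise, $\Op R_{\V\phi}\Op R_{\V p}$ and $\Op R_{\V p}\Op R_{\V\phi}$ literally equal $\sum_n\phi_n\langle p_n,\cdot\rangle$ and $\sum_n p_n\langle\cdot,\phi_n\rangle$ by biorthogonality, so no special reading of the notation is needed.
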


\begin{proof}
\emph{Existence}:
Let us fix $j\in\{1,\dots,N_0\}$ and set $M_j\eqdef\bigcap_{i\neq j}\ker p_i\subset\Spc S$. A continuous
linear functional on $\Spc S$ belongs to the span of $\{p_i\}_{i\neq j}$ if and only if it vanishes on
$M_j$; this is the standard consequence of the Hahn--Banach theorem which identifies the annihilator of a
finite family of functionals \cite[Ch.~3]{Rudin1991}. Since $\V p$ is linearly independent, $p_j$ is not in the span of
$(p_i)_{i\neq j}$, and therefore $p_j$ does not vanish on $M_j$. Consequently, there exists some $\psi_j\in M_j$ with
$\langle p_j,\psi_j\rangle\neq0$, and we set $\phi_j\eqdef\psi_j/\langle p_j,\psi_j\rangle$. This gives
$\langle p_i,\phi_j\rangle=\delta_{ij}$ for all $i,j$, which is the announced biorthogonality.

\medskip
\emph{Projector relations}:
For $\varphi\in\Spc S$, a direct computation gives
$\Op R_{\V \phi}\Op R_{\V p}\varphi=\sum_{k}\phi_k\langle p_k,\varphi\rangle$, because
$\langle p_n,\phi_k\rangle=\delta_{nk}$. If $\varphi\in\Spc S_{\V p}$, then $\langle p_n,\varphi\rangle=0$
for all $n$, and hence $\Op R_{\V p}\varphi=0$. Applying $\Op R_{\V p}$ to
$\Op R_{\V \phi}\Op R_{\V p}\varphi=\sum_k\phi_k\langle p_k,\varphi\rangle$ and using biorthogonality once
more yields $\Op R_{\V p}\Op R_{\V \phi}\Op R_{\V p}\varphi=\sum_n p_n\langle p_n,\varphi\rangle=\Op R_{\V p}\varphi$,
which is the second stated relation.

\medskip
\emph{Projector}:
For every $\varphi\in\Spc S$ and every $n$, we have
$\langle p_n,\Op P_{\V \phi}\varphi\rangle=\langle p_n,\varphi\rangle-\sum_k\langle p_n,\phi_k\rangle\langle p_k,\varphi\rangle
=\langle p_n,\varphi\rangle-\langle p_n,\varphi\rangle=0$, and hence $\Op P_{\V \phi}\varphi\in\Spc S_{\V p}$. If
$\varphi\in\Spc S_{\V p}$, then $\Op R_{\V p}\varphi=0$, and therefore $\Op P_{\V \phi}\varphi=\varphi$. It
follows that $\Op P_{\V \phi}^2=\Op P_{\V \phi}$ and $\mathrm{ran}\,\Op P_{\V \phi}=\Spc S_{\V p}$. Moreover $\Op P_{\V \phi}\varphi=0$ holds if
and only if $\varphi=\Op R_{\V \phi}\Op R_{\V p}\varphi=\sum_k\phi_k\langle p_k,\varphi\rangle$, that is
$\varphi\in\Spc N_{\V \phi}$; thus $\ker\Op P_{\V \phi}=\Spc N_{\V \phi}$ and
$\Spc S=\Spc S_{\V p}\oplus\Spc N_{\V \phi}$.

\medskip
\emph{Adjoint}:
A direct computation gives $\langle\Op P_{\V \phi}^\ast f,\varphi\rangle=\langle f,\Op P_{\V \phi}\varphi\rangle$ for all
$f\in\Spc S'$ and $\varphi\in\Spc S$, which identifies $\Op P_{\V \phi}^\ast$ as the adjoint of $\Op P_{\V \phi}$. The
relations $\ker\Op P_{\V \phi}^\ast=\Spc N_{\V p}$, $\mathrm{ran}\,\Op P_{\V \phi}^\ast=\Spc S'_{\V \phi}$, and $\Spc S'=\Spc S'_{\V \phi}\oplus\Spc N_{\V p}$ follow by
transposition of those for $\Op P_{\V \phi}$.
\end{proof}

To facilitate the reading of the results that follow, we recapitulate in
Table~\ref{Tab:Glossary} the notation for the objects that underlie our subsequent
constructions.
\begin{table}[htbp]
\centering
\small
\begin{tabular}{@{}lp{0.66\textwidth}@{}}
\toprule
Symbol & Meaning\\
\midrule
$(\Spc S,\Spc S')$ & nuclear Gelfand pair, with canonical embedding $\iota:\Spc S\to\Spc S'$ (Definition~\ref{Def:NuclearGelfandPair})\\
$\V p=(p_1,\dots,p_{N_0})$ & conditioning family in $\Spc S'$, with $\Spc N_{\V p}=\mathrm{span}(\V p)$ and $N_0=\dim\Spc N_{\V p}$\\
$\Spc S_{\V p}=\Spc N_{\V p}^\perp$ & detrended test functions, those with $\V p(\varphi)=\V 0$\\
$\Spc N_{\Op B}$ & attainable null space of the energy $|\cdot|_{\Op B}$ (Definition~\ref{Def:AttainableNull})\\
$\V \phi=(\phi_1,\dots,\phi_{N_0})$ & family in $\Spc S$ biorthogonal to $\V p$, with $\Spc N_{\V \phi}=\mathrm{span}(\V \phi)$ (Lemma~\ref{Lem:Biorthogonal})\\
$\Op R_{\V p},\ \Op R_{\V \phi}$ & rank-$N_0$ operators $\sum_n p_n\langle p_n,\cdot\rangle$ and $\sum_n\phi_n\langle\cdot,\phi_n\rangle$\\
$\Op P_{\V \phi},\ \Op P_{\V \phi}^\ast$ & oblique projectors of $\Spc S$ onto $\Spc S_{\V p}$ and of $\Spc S'$ onto $\Spc S'_{\V \phi}$\\
$\pi$ & quotient map $\Spc S'\to\Spc S'/\Spc N_{\Op B}$\\
$\Op B=\Op L^\ast\Op L$ & normal (energy) operator, with $|\varphi|_{\Op B}=\|\Op L\varphi\|_{L_2}$\\
$\Op A,\ \Op A_{\V \phi}$ & conditional inverse of $\Op B$ and its conditioned form $\Op P_{\V \phi}^\ast\Op A\Op P_{\V \phi}$\\
$\Op A_+,\ \Op B_+$ & augmented pair $\Op A_{\V \phi}+\Op R_{\V p}$ and $\Op B+\Op R_{\V \phi}$ (Theorem~\ref{Thm:Augmentation})\\
$\Spc H_{\Op A}$ & $\overline{(\Spc S_{\V p},|\cdot|_{\Op A})}$, realized in $\Spc S'$ \eqref{Eq:HA}\\
$\dot{\Spc H}_{\Op B}$ & $\overline{(\Spc S,|\cdot|_{\Op B})}$, realized in the quotient $\Spc S'/\Spc N_{\Op B}$ \eqref{Eq:HB}\\
$\Spc H'_{\Op A}$ & $\Op P_{\V \phi}^\ast(\dot{\Spc H}_{\Op B})\subset\Spc S'_{\V \phi}$, the concrete realization of $\dot{\Spc H}_{\Op B}$\\
$\Spc H,\ \Spc H'$ & $\Spc H_{\Op A}\oplus\Spc N_{\V \phi}$ and $\Spc H'_{\Op A}\oplus\Spc N_{\V p}$, the augmented dual pair; independent of $\V \phi$ as sets (Corollary~\ref{Cor:PhiIndependence})\\
$\Spc H_{\Op L}$, $\Op L:\Spc H_{\Op L}\to L_2$ & native space $\Spc H'$ of an admissible $\Op L$ and the extension $\widetilde{\Op L}\pi$ of $\Op L$ to it (Proposition~\ref{Prop:NativeUniqueness})\\
$Q,\ e_\varphi$ & dual energy and representer of $\varphi$ (Theorem~\ref{Thm:PracticalChecklist})\\
\bottomrule
\end{tabular}
\caption{The objects of the construction and where they are introduced.}
\label{Tab:Glossary}
\end{table}

In accordance
with the convention of Section~\ref{Sec:HilbertGelfand}, we denote the operators and their
Hilbertian extensions by the same symbols: $\Op B$ extends to the isometric isomorphism
$\dot{\Spc H}_{\Op B}\to\Spc H_{\Op A}$ of Definition~\ref{Def:Intrinsic} and annihilates
$\Spc N_{\V p}$, while $\pi\Op A$ extends to the inverse isometry
$\Spc H_{\Op A}\to\dot{\Spc H}_{\Op B}$. At the level of test functions, the sequential reading
of Definition~\ref{Def:Intrinsic}(ii) translates these mutually inverse extensions into the two
inversion identities
\begin{align}
\label{Eq:QuasiInverse}
\Op A\,\Op B=\Identity\ \text{on}\ \Spc S,
\qquad
\Op B\,\Op A=\Identity\ \text{on}\ \Spc S_{\V p},
\end{align}
where the second identity is restricted to $\Spc S_{\V p}$ because $\Op A\varphi$ lies in
$\dot{\Spc H}_{\Op B}$---the domain of the extended $\Op B$---only when
$\varphi\in\Spc S_{\V p}$ (Theorem~\ref{Thm:AttainableNull}, Item (iii)). In the concrete
constructions of the later sections, where the operators are realized as convolutions or as
Fourier multipliers, the identities \eqref{Eq:QuasiInverse} can also be verified
distributionally.

\begin{theorem}[Concretization]
\label{Thm:Concretization}
Let $(\Op A,\Op B)$ be a $\V p$-conditional Riesz pair in the sense of
Definition~\ref{Def:Intrinsic}, so that the inversion identities \eqref{Eq:QuasiInverse} hold.
Let us fix a biorthogonal family $\V \phi\subset\Spc S$ with
$\langle p_i,\phi_j\rangle=\delta_{ij}$, whose existence together with the projectors $\Op P_{\V \phi}$
and $\Op P_{\V \phi}^\ast$ is granted by Lemma~\ref{Lem:Biorthogonal}, and let us set
$\Op A_{\V \phi}\eqdef\Op P_{\V \phi}^\ast\,\Op A\,\Op P_{\V \phi}$. Then, the
following hold.
\begin{enumerate}
\item $\Op P_{\V \phi}^\ast$ restricts to an isometric identification
\begin{align*}
\Op P_{\V \phi}^\ast:\ \dot{\Spc H}_{\Op B}\ \xrightarrow{\ \sim\ }\
\Op P_{\V \phi}^\ast\!\big(\dot{\Spc H}_{\Op B}\big)\subset\Spc S'_{\V \phi},
\end{align*}
which selects in each equivalence class the unique representative that annihilates $\V \phi$. 
This enables us to identify a concrete dual of $\Spc H_\Op A$ as $\Spc H'_{\Op A}=\Op P_{\V \phi}^\ast(\dot{\Spc H}_{\Op B})\cong \dot{\Spc H}_{\Op B}$.
\item The conditioned operator $\Op A_{\V \phi}$ satisfies $\ker\Op A_{\V \phi}=\Spc N_{\V \phi}$ and
\begin{align*}
\Op B\,\Op A_{\V \phi}=\Op P_{\V \phi}\ \text{on}\ \Spc S,
\qquad
\Op A_{\V \phi}\,\Op B=\Op P_{\V \phi}^\ast\ \text{on}\ \Spc S.
\end{align*}
\item The objects $\Spc H_{\Op A}$, $\dot{\Spc H}_{\Op B}$ and $\pi\Op A$ are independent of
$\V \phi$. 
\end{enumerate}
\end{theorem}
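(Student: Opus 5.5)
Item~1 comes from the direct sum $\Spc S'=\Spc S'_{\V\phi}\oplus\Spc N_{\V p}$ of Lemma~\ref{Lem:Biorthogonal}. Since $\Spc N_{\Op B}=\Spc N_{\V p}$ (Definition~\ref{Def:Intrinsic}(i)) and $\ker\Op P_{\V\phi}^\ast=\Spc N_{\V p}$, the map $\Op P_{\V\phi}^\ast$ factors through the quotient $\pi$. It induces a linear bijection $\Spc S'/\Spc N_{\Op B}\to\Spc S'_{\V\phi}$ that sends each class $f+\Spc N_{\V p}$ to its unique representative annihilating $\V\phi$. Uniqueness holds because two such representatives differ by an element $\sum_n c_np_n$ with $\langle\sum_n c_np_n,\phi_k\rangle=c_k=0$. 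Continuity in the weak-$\ast$ sense follows from $\langle\Op P_{\V\phi}^\ast f,\varphi\rangle=\langle f,\Op P_{\V\phi}\varphi\rangle$. By Proposition~\ref{Prop:Realization}(3), $\dot{\Spc H}_{\Op B}$ sits injectively in $\Spc S'/\Spc N_{\V p}$. Composing with this bijection gives an injective map into $\Spc S'_{\V\phi}$, and we transport the norm to define $\Spc H'_{\Op A}$, so the isometry holds by construction. The one point to check is that this realization is the one compatible with the pairing against $\Spc S_{\V p}$. For $g$ represented by $(\psi_k)$ and $\varphi\in\Spc S_{\V p}$, we have $\langle\Op P_{\V\phi}^\ast g,\varphi\rangle=\lim_k\langle\psi_k,\Op P_{\V\phi}\varphi\rangle=\lim_k\langle\psi_k,\varphi\rangle=\langle g,\varphi\rangle$, so $\Op P_{\V\phi}^\ast(\dot{\Spc H}_{\Op B})$ does act as the dual of $\Spc H_{\Op A}$.

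For Item~2, I start from the inversion identities \eqref{Eq:QuasiInverse}. First, the kernel. The inclusion $\ker\Op A_{\V\phi}\supseteq\Spc N_{\V\phi}$ holds because $\Op P_{\V\phi}\Spc N_{\V\phi}=0$. Conversely, suppose $\Op P^\ast_{\V\phi}\Op A\Op P_{\V\phi}\varphi=0$. Then $\Op A\Op P_{\V\phi}\varphi\in\Spc N_{\V p}=\Spc N_{\Op B}$, and the extended $\Op B$ annihilates it. Since $\Op P_{\V\phi}\varphi\in\Spc S_{\V p}$, the identity $\Op B\Op A=\Identity$ there gives $\Op P_{\V\phi}\varphi=0$, that is, $\varphi\in\Spc N_{\V\phi}$. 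The same argument gives $\Op B\Op A_{\V\phi}\varphi=\Op B\Op A\Op P_{\V\phi}\varphi-\Op B\Op R_{\V p}\Op R_{\V\phi}\Op A\Op P_{\V\phi}\varphi=\Op P_{\V\phi}\varphi$, because the second term lies in $\Spc N_{\V p}$ and is killed by $\Op B$.

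The second identity, $\Op A_{\V\phi}\Op B=\Op P_{\V\phi}^\ast$, is the step I expect to be the main obstacle. It requires reading $\Op A\Op P_{\V\phi}\Op B\varphi$ correctly, since $\Op B\varphi$ is a distribution rather than a test function. My plan is to interpret $\Op P_{\V\phi}$ through its adjoint/extension on $\Spc H_{\Op A}$, using that $\Op B\varphi\in\Spc H_{\Op A}$ already annihilates $\Spc N_{\V p}$. Then $\Op P_{\V\phi}\Op B\varphi=\Op B\varphi-\Op R_{\V\phi}\Op R_{\V p}\Op B\varphi=\Op B\varphi$, and $\Op A\Op B\varphi=\varphi$ yields $\Op A_{\V\phi}\Op B\varphi=\Op P_{\V\phi}^\ast\varphi$. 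I would justify this by approximating $\Op B\varphi$ in $\Spc H_{\Op A}$ by elements of $\Spc S_{\V p}$, on which $\Op P_{\V\phi}$ is the identity. I would then pass to the limit using the isometry $\pi\Op A:\Spc H_{\Op A}\to\dot{\Spc H}_{\Op B}$ together with Item~1, which identifies $\Op P^\ast_{\V\phi}\pi\Op A$ continuously. A useful sanity check is that applying $\Op B$ to both sides gives $\Op B\Op P^\ast_{\V\phi}\varphi=\Op B\varphi$ consistently, since $\Op P^\ast_{\V\phi}\varphi-\varphi\in\Spc N_{\V p}$.

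Item~3 is then immediate from the definitions. $\Spc H_{\Op A}$ is built from $\Spc S_{\V p}$ and $|\cdot|_{\Op A}$, $\dot{\Spc H}_{\Op B}$ from $\Spc S$ and $|\cdot|_{\Op B}$, and $\pi\Op A$ from $\Op A$ and the quotient by $\Spc N_{\Op B}$; none of these involves $\V\phi$. Only the representatives chosen by $\Op P_{\V\phi}^\ast$ and the operator $\Op A_{\V\phi}$ depend on it.
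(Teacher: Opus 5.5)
Your proposal is correct and follows the paper's proof. Item 1 factors $\Op P_{\V\phi}^\ast$ through $\pi$ via $\ker\Op P_{\V\phi}^\ast=\Spc N_{\V p}=\Spc N_{\Op B}$ and transports the norm; Item 2 uses $\Op B\Op P_{\V\phi}^\ast=\Op B$ and $\Op P_{\V\phi}\Op B=\Op B$ together with \eqref{Eq:QuasiInverse}; Item 3 is by inspection, and the only differences are cosmetic: for the kernel you use the left inverse $\Op B\Op A=\Identity$ on $\Spc S_{\V p}$ where the paper uses the injectivity of $\pi\Op A$ on $\Spc H_{\Op A}$, and you justify $\Op P_{\V\phi}\Op B\varphi=\Op B\varphi$ by approximation in $\Spc H_{\Op A}$ rather than by the symmetry of $\Op B$ with $\Op B p_k=0$.
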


\begin{proof}
\emph{Projectors and the identification}:
By Lemma~\ref{Lem:Biorthogonal}, $\Op P_{\V \phi}$ projects $\Spc S$ onto $\Spc S_{\V p}$ along
$\Spc N_{\V \phi}$, and $\Op P_{\V \phi}^\ast$ projects $\Spc S'$ onto $\Spc S'_{\V \phi}$ along $\Spc N_{\V p}$.
The hypothesis $\Spc N_{\Op B}=\Spc N_{\V p}$ shows that $\ker\Op P_{\V \phi}^\ast=\Spc N_{\Op B}$, which means
that $\Op P_{\V \phi}^\ast$ annihilates exactly the null classes. It therefore descends to the stated isometry
from $\dot{\Spc H}_{\Op B}$ onto its representatives in $\Spc S'_{\V \phi}$.

\medskip
\emph{Inversion relations}:
As noted after Definition~\ref{Def:Intrinsic}, the extension of $\Op B$ annihilates
$\Spc N_{\Op B}=\mathrm{span}(\V p)$, from which we deduce that $\Op B\Op P_{\V \phi}^\ast=\Op B$. The symmetry of $\Op B$ gives
$\langle p_k,\Op B\,\cdot\rangle=\langle\Op B p_k,\cdot\rangle=0$, which yields $\Op P_{\V \phi}\Op B=\Op B$.
Therefore
\begin{align*}
\Op B\Op A_{\V \phi}=\Op B\Op P_{\V \phi}^\ast\Op A\Op P_{\V \phi}=\Op B\Op A\Op P_{\V \phi}=\Op P_{\V \phi},
\qquad
\Op A_{\V \phi}\,\Op B=\Op P_{\V \phi}^\ast\Op A\Op P_{\V \phi}\Op B=\Op P_{\V \phi}^\ast\Op A\Op B=\Op P_{\V \phi}^\ast.
\end{align*}
The last equality in the first chain applies the second identity of
\eqref{Eq:QuasiInverse} to $\Op P_{\V \phi}\varphi\in\Spc S_{\V p}$, while the last equality in the
second chain applies the first identity of \eqref{Eq:QuasiInverse}. 

\medskip
\emph{Null space}:
Let us suppose that $\Op A_{\V \phi}\varphi=0$. Then $\Op A\Op P_{\V \phi}\varphi\in\ker\Op P_{\V \phi}^\ast=\Spc N_{\V p}=\ker\pi$,
which gives $\pi\Op A(\Op P_{\V \phi}\varphi)=0$. The map $\pi\Op A$ is injective on $\Spc H_{\Op A}$, and
hence $\Op P_{\V \phi}\varphi=0$, that is $\varphi\in\Spc N_{\V \phi}$. The reverse inclusion follows from
$\Op P_{\V \phi}\V \phi=\V 0$, which gives $\Op A_{\V \phi}\V \phi=\V 0$. Therefore
$\ker\Op A_{\V \phi}=\Spc N_{\V \phi}$. Finally, for $\varphi\in\Spc S_{\V p}$ we have
$\Op A_{\V \phi}\varphi=\Op P_{\V \phi}^\ast\Op A\varphi$, which differs from $\Op A\varphi$ by an element of
$\Spc N_{\V p}=\ker\pi$; consequently, $\pi\Op A_{\V \phi}\varphi=\pi\Op A\varphi$: the
conditioned operator represents the same isometry as $\Op A$.

\medskip
\emph{Independence}:
The spaces $\Spc H_{\Op A}$ and $\dot{\Spc H}_{\Op B}$ and the map $\pi\Op A$ make no reference to
$\V \phi$.
\end{proof}

The concretization of Theorem~\ref{Thm:Concretization} still carries the projector $\Op P_{\V \phi}$, which depends on the choice of $\V \phi$. One can
go one step further and augment the two operators by
complementary rank-$N_0$ terms, which reinstall the missing directions as explicit
finite-dimensional blocks. The outcome is an unconditioned dual pair of Hilbert--Gelfand spaces with
a direct-sum structure.

\begin{theorem}[Augmented pair and direct-sum structure]
\label{Thm:Augmentation}
Under the assumptions and notations of Theorem~\ref{Thm:Concretization}, and with the rank-$N_0$
operators $\Op R_{\V p}$ and $\Op R_{\V \phi}$ of Lemma~\ref{Lem:Biorthogonal}, we define the augmented
operators
\begin{align*}
\Op A_+\eqdef\Op A_{\V \phi}+\Op R_{\V p},
\qquad
\Op B_+\eqdef\Op B+\Op R_{\V \phi}.
\end{align*}
The notation $\V \phi(f)\eqdef\big(\langle f,\phi_1\rangle,\dots,\langle f,\phi_{N_0}\rangle\big)$ collects the
$\V \phi$-coordinates of $f \in \Spc S'$. Then, the following properties hold.
\begin{enumerate}
\item {\em Positivity:} The operators $\Op A_+$ and $\Op B_+$ are positive-definite on $\Spc S$, with
\begin{align*}
|\varphi|_{\Op A_+}^2=|\Op P_{\V \phi}\varphi|_{\Op A}^2+|\V p(\varphi)|_2^2,
\qquad
|\varphi|_{\Op B_+}^2=|\varphi|_{\Op B}^2+|\V \phi(\varphi)|_2^2,
\end{align*}
where $|\cdot|_2$ denotes the Euclidean norm on $\R^{N_0}$.
\item {\em Inversion:} $\Op B_+\Op A_+=\Identity$ and $\Op A_+\Op B_+=\Identity$ on $\Spc S$. In particular, no
conditioning and no projector remain.
\item {\em Direct sums:} The completions
\begin{align*}
\Spc H\eqdef\overline{(\Spc S,|\cdot|_{\Op A_+})}
=\Spc H_{\Op A}\oplus\Spc N_{\V \phi},
\qquad
\Spc H'\eqdef\overline{(\Spc S,|\cdot|_{\Op B_+})}
=\Spc H'_{\Op A}\oplus\Spc N_{\V p}
\end{align*}
are orthogonal direct sums, where $\Spc H'_{\Op A}\eqdef\Op P_{\V \phi}^\ast(\dot{\Spc H}_{\Op B})\subset\Spc S'_{\V \phi}$
is the concrete realization of $\dot{\Spc H}_{\Op B}$, and where the finite-dimensional blocks carry
the Euclidean metrics $|\V p(\cdot)|_2$ and $|\V \phi(\cdot)|_2$, respectively. Both spaces satisfy
$\Spc S\embedD\Spc H\embedD\Spc S'$ and $\Spc S\embedD\Spc H'\embedD\Spc S'$ with the canonical
inclusions; they are Hilbert--Gelfand spaces.
\item {\em Duality:} The extension of the $\Spc S$--$\Spc S'$ pairing identifies $\Spc H'$ with the dual
of $\Spc H$, and the pairing is block-diagonal with respect to the two direct sums. The induced
extensions
\begin{align*}
\Op A_+:\Spc H\to\Spc H',
\qquad
\Op B_+:\Spc H'\to\Spc H
\end{align*}
are isometric, mutually inverse, and exchange the finite-dimensional blocks:
$\Op A_+\phi_n=p_n$ and $\Op B_+p_n=\phi_n$ for $n=1,\dots,N_0$. Consequently, $(\Op A_+,\Op B_+)$
is a compatible Riesz pair (the case $N_0=0$) for $(\Spc S,\Spc S')$.
\end{enumerate}
\end{theorem}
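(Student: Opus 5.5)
The plan is to reduce everything to the block structure induced by the splitting $\Spc S=\Spc S_{\V p}\oplus\Spc N_{\V \phi}$ of Lemma~\ref{Lem:Biorthogonal}, and then to invoke Theorem~\ref{Theo:HilbertPositiveDef} (or Corollary~\ref{Cor:AlgebraicInverse}) for the unconditioned conclusion.

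\emph{Positivity.} For $\varphi\in\Spc S$ we write $\varphi=\Op P_{\V \phi}\varphi+\Op R_{\V \phi}\Op R_{\V p}\varphi$. Then
\begin{align*}
\langle\Op A_{\V \phi}\varphi,\varphi\rangle=\langle\Op A\Op P_{\V \phi}\varphi,\Op P_{\V \phi}\varphi\rangle=|\Op P_{\V \phi}\varphi|_{\Op A}^2,
\qquad
\langle\Op R_{\V p}\varphi,\varphi\rangle=|\V p(\varphi)|_2^2 .
\end{align*}
Likewise, $\langle\Op R_{\V \phi}\varphi,\varphi\rangle=|\V\phi(\varphi)|_2^2$. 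This gives both formulas. For definiteness of $\Op A_+$, a vanishing value forces $\V p(\varphi)=0$, so $\varphi\in\Spc S_{\V p}$ and $\varphi=\Op P_{\V \phi}\varphi$. The $\V p$-conditional positivity of $\Op A$ (Theorem~\ref{Thm:AttainableNull}(iii)) then gives $\varphi=0$. For $\Op B_+$, definiteness is inherited from $\Op B$.

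\emph{Inversion.} We expand the products and use Theorem~\ref{Thm:Concretization}(2), namely $\Op B\Op A_{\V \phi}=\Op P_{\V \phi}$ and $\Op A_{\V \phi}\Op B=\Op P_{\V \phi}^\ast$, together with four cross identities:
\begin{itemize}
\item $\Op B\Op R_{\V p}=0$, since the extension of $\Op B$ annihilates $\Spc N_{\V p}=\Spc N_{\Op B}$;
\item $\Op A_{\V \phi}\Op R_{\V \phi}=0$, since $\ker\Op A_{\V \phi}=\Spc N_{\V \phi}$;
\item $\Op R_{\V p}\Op B=0$, by symmetry of $\Op B$ and $\Op B p_k=0$;
\item $\Op R_{\V \phi}\Op A_{\V \phi}=0$, since $\Op A_{\V \phi}$ takes values in $\Spc S'_{\V \phi}$.
\end{itemize}
What remains is $\Op B_+\Op A_+=\Op P_{\V \phi}+\Op R_{\V \phi}\Op R_{\V p}=\Identity$ and $\Op A_+\Op B_+=\Op P_{\V \phi}^\ast+\Op R_{\V p}\Op R_{\V \phi}=\Identity$.

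\emph{Main obstacle.} The hard step is making these compositions rigorous, because $\Op B\Op A$ only makes sense through Hilbertian extensions. I would work sequentially, as after Definition~\ref{Def:Intrinsic}. Where $\Op B$ acts on $\Op A_{\V \phi}\varphi\in\Op P^\ast_{\V\phi}(\dot{\Spc H}_{\Op B})$, it is read as the extended $\Op B$ on the class $\pi\Op A\Op P_{\V \phi}\varphi$. The vanishing $\Op B\Op R_{\V p}=0$ must likewise be read on $\dot{\Spc H}_{\Op B}$, where the classes of $p_n$ are zero.

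\emph{Direct sums and duality.} Completing $(\Spc S,|\cdot|_{\Op A_+})$ along the algebraic splitting gives an orthogonal sum of the completion of $(\Spc S_{\V p},|\cdot|_{\Op A})$, which is $\Spc H_{\Op A}$, and of $\Spc N_{\V \phi}$ with the metric $|\V p(\cdot)|_2$. The two pieces are orthogonal because the norm identity has no cross term. Both are realized in $\Spc S'$, by Proposition~\ref{Prop:Realization}(2) and trivially for the finite block. The embedding $\Spc H\embedC\Spc S'$ is continuous as a sum of two continuous embeddings.

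For $\Spc H'$, the piece $(\Spc S,|\cdot|_{\Op B})$ is mapped by $\Op P_{\V \phi}^\ast$ isometrically onto $\Spc H'_{\Op A}\subset\Spc S'_{\V \phi}$ (Theorem~\ref{Thm:Concretization}(1)). The coordinate $\V\phi(\varphi)$ is carried by $\Op R_{\V p}\Op R_{\V \phi}\varphi\in\Spc N_{\V p}$, and this gives $\Spc H'=\Spc H'_{\Op A}\oplus\Spc N_{\V p}$ inside $\Spc S'$. Continuity of $\Spc H'\embedC\Spc S'$ follows from the bound $|\langle h,\varphi\rangle|\le\|h\|\,|\varphi|_{\Op A_+}$, using the duality bound of Proposition~\ref{Prop:Realization}(1) on the first block and Cauchy--Schwarz on the second. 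Theorem~\ref{Theo:HilbertPositiveDef}, Item 3, then makes both spaces Hilbert--Gelfand spaces.

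Positive-definiteness of both operators and the inversion identities on $\Spc S$ are the ingredients of Corollary~\ref{Cor:AlgebraicInverse}(ii). That corollary yields the compatible Riesz pair, the identification $\Spc H'=\Spc H^\prime$, and the isometric mutually inverse extensions. Block-diagonality of the pairing follows from $\langle\Spc S'_{\V\phi},\Spc N_{\V\phi}\rangle=0$ and $\langle\Spc N_{\V p},\Spc S_{\V p}\rangle=0$. Finally, the block exchange is the direct computation $\Op A_+\phi_n=0+p_n$ and $\Op B_+p_n=0+\phi_n$, using the extended $\Op B$.
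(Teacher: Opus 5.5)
Your positivity computation and your four-term expansions for the inversion identities match the paper's. The gap is on the dual side, in the claim that completing $(\Spc S,|\cdot|_{\Op B_+})$ yields $\Spc H'=\Spc H'_{\Op A}\oplus\Spc N_{\V p}$.

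\emph{Why the $\Spc H'$ side differs.} For $\Spc H$, the splitting $\Spc S=\Spc S_{\V p}\oplus\Spc N_{\V\phi}$ lives inside $\Spc S$, so completing block by block is legitimate. For $\Spc H'$, both components of $\varphi=\Op P_{\V\phi}^\ast\varphi+\Op R_{\V p}\Op R_{\V\phi}\varphi$ lie outside $\Spc S$, and both are determined by the same $\varphi$. The norm identity therefore gives only an isometric embedding of $\Spc H'$ \emph{into} $\Spc H'_{\Op A}\oplus\Spc N_{\V p}$. Equality requires the pairs $(\Op P_{\V\phi}^\ast\varphi,\V\phi(\varphi))$, $\varphi\in\Spc S$, to be dense. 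That means you need test functions with vanishing $\Op B$-energy but prescribed $\V\phi$-coordinates.

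\emph{Where attainability is missing.} This density is exactly where $\Spc N_{\V p}\subseteq\Spc N_{\Op B}$ must be used. In your proposal, attainability enters only through $\Op Bp_n=0$ and through the isometry of Theorem~\ref{Thm:Concretization}, and neither gives density. This is not a formality. Take $|\cdot|_{\Op B}=\|\cdot\|_{L_2}$ on $\Spc S(\R)$, the non-attainable $p_1=\delta$, and any $\phi_1$ with $\phi_1(0)=1$. The same norm identity holds, but the completion is $L_2$, which does not contain $\delta$.

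\emph{How the paper closes it.} The paper uses sequences $(\chi^{(n)}_k)\subset\Spc S$ with $|\chi^{(n)}_k|_{\Op B}\to0$ and $\chi^{(n)}_k\to p_n$ in $\Spc S'$. Since $\V\phi(\chi^{(n)}_k)\to\V e_n$, these sequences are $|\cdot|_{\Op B_+}$-Cauchy, so $p_n\in\Spc H'$ with $\|p_n\|_{\Spc H'}=1$. The corrected sequences $g-\sum_n\V\phi(g)_n\chi^{(n)}_k\to\Op P_{\V\phi}^\ast g$ then place $\Spc H'_{\Op A}$ inside $\Spc H'$. The same sequences also give meaning to $\langle p_n,\Op B\varphi\rangle\eqdef\lim_k\langle\chi^{(n)}_k,\Op B\varphi\rangle=0$ in your step $\Op R_{\V p}\Op B=0$.

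\emph{Consequence for the Riesz-pair step.} The same missing inclusion undermines your appeal to Corollary~\ref{Cor:AlgebraicInverse}(ii). That condition is not the inversion identities on $\Spc S$; it is their validity through the Hilbertian extensions. This presupposes $\Op B_+(\Spc S)\subseteq\Spc H$ and $\Op A_+(\Spc S)\subseteq\Spc H'$. The first follows from $\Op B\varphi\in\Spc H_{\Op A}$ and $\Op R_{\V\phi}\varphi\in\Spc S$. The second says $\Op A_{\V\phi}\varphi+\Op R_{\V p}\varphi\in\Spc H'$, which is precisely the unproved half of the direct sum. The paper obtains it from the corrected sequence $\tilde\psi_k=\psi_k+\sum_n(\V p(\varphi)_n-\V\phi(\psi_k)_n)\chi^{(n)}_{j(k)}$.

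\emph{Smaller omission on the $\Spc H$ side.} The sum map $\Spc H_{\Op A}\oplus\Spc N_{\V\phi}\to\Spc S'$ is not injective just because each block embeds. You also need $\Spc H_{\Op A}\cap\Spc N_{\V\phi}=\{0\}$, which again rests on attainability. One way: pair $h=\V\phi^\Tr\M c\in\Spc H_{\Op A}$ with $\chi^{(m)}_k$ and use the bound $|\langle h,\psi\rangle|\le\|h\|_{\Spc H_{\Op A}}|\psi|_{\Op B}$ of Proposition~\ref{Prop:Realization}; this forces $\M c=\V 0$.

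\emph{An alternative fix.} Once injectivity is settled, you could avoid the corrected sequences. Check $\langle\Op B_+\varphi,\psi\rangle_{\Spc H}=\langle\psi,\varphi\rangle$ directly from the block structure and Item~1 of Proposition~\ref{Prop:Realization}. This identifies $\Op B_+$ with $\Op J_{\Spc H}^{-1}|_{\Spc S}$. Theorem~\ref{Theo:HilbertPositiveDef} then gives $\Spc H'=\Spc H^\ast$, which contains $\Op J_{\Spc H}\phi_n=p_n$.
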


\begin{proof}
We first establish two key properties. The hypothesis $\Spc N_{\Op B}=\Spc N_{\V p}$ provides, for each $n$, a
sequence $(\chi^{(n)}_k)\subset\Spc S$ such that $|\chi^{(n)}_k|_{\Op B}\to0$ and
$\chi^{(n)}_k\to p_n$ in $\Spc S'$. The Cauchy--Schwarz inequality gives
$|\langle\Op B\varphi,\chi^{(n)}_k\rangle|\le|\varphi|_{\Op B}\,|\chi^{(n)}_k|_{\Op B}\to0$ for every
$\varphi\in\Spc S$. This controlled limit behavior has two consequences, which are central to the argument below:
\begin{enumerate}
\item[(i)]  the extension
of $\Op B$ to $\Spc N_{\V p}$ vanishes, so that $\Op B p_n=0$; and
\item[(ii)]  the pairing
$\langle p_n,\Op B\varphi\rangle\eqdef\lim_k\langle\chi^{(n)}_k,\Op B\varphi\rangle=0$ is well defined
and independent of the chosen sequence.
\end{enumerate}

\medskip
\emph{Positivity}:
Every $\varphi\in\Spc S$ decomposes as $\varphi=\Op P_{\V \phi}\varphi+\V \phi^\Tr\V p(\varphi)$. Since
$\Op A_{\V \phi}=\Op P_{\V \phi}^\ast\Op A\Op P_{\V \phi}$, we obtain
$\langle\Op A_{\V \phi}\varphi,\varphi\rangle=\langle\Op A\Op P_{\V \phi}\varphi,\Op P_{\V \phi}\varphi\rangle
=|\Op P_{\V \phi}\varphi|_{\Op A}^2$ and
$\langle\Op R_{\V p}\varphi,\varphi\rangle=|\V p(\varphi)|_2^2$, which proves the first norm identity.
Let us now suppose that $|\varphi|_{\Op A_+}=0$. Then, $\Op P_{\V \phi}\varphi=0$ (because $\Op P_{\V \phi}\varphi\in\Spc S_{\V p}$ and $\Op A$ is
$\V p$-conditionally positive-definite) and
$\V p(\varphi)=\V 0$, which gives $\varphi=0$. The second identity follows from
$\langle\Op R_{\V \phi}\varphi,\varphi\rangle=|\V \phi(\varphi)|_2^2$, and the positivity of $\Op B_+$ is
immediate because $|\cdot|_{\Op B}$ is already a norm.

\medskip
\emph{Inversion}:
For $\varphi\in\Spc S$, we expand the product into four terms:
\begin{align*}
\Op B_+\Op A_+
=\Op B\,\Op A_{\V \phi}+\Op B\,\Op R_{\V p}+\Op R_{\V \phi}\,\Op A_{\V \phi}+\Op R_{\V \phi}\,\Op R_{\V p}.
\end{align*}
Theorem~\ref{Thm:Concretization} gives $\Op B\Op A_{\V \phi}=\Op P_{\V \phi}$. Next,
$\Op B\Op R_{\V p}\varphi=\sum_n\langle p_n,\varphi\rangle\,\Op Bp_n=0$ by property~(i).
Likewise, $\Op R_{\V \phi}\Op A_{\V \phi}\varphi=\sum_n\phi_n\langle\Op A_{\V \phi}\varphi,\phi_n\rangle=0$,
because $\Op A_{\V \phi}\varphi\in\Spc S'_{\V \phi}$. Finally, the biorthogonality of $(\V p, \V \phi)$ yields
\begin{align*}
\Op R_{\V \phi}\Op R_{\V p}\varphi
=\sum_{m,n}\phi_m\,\langle p_n,\varphi\rangle\,\langle p_n,\phi_m\rangle
=\sum_{n}\phi_n\,\langle p_n,\varphi\rangle
=(\Identity-\Op P_{\V \phi})\varphi .
\end{align*}
The sum of the four terms is $\Op P_{\V \phi}+(\Identity-\Op P_{\V \phi})=\Identity$. For the reverse order, we expand
\begin{align*}
\Op A_+\Op B_+
=\Op A_{\V \phi}\,\Op B+\Op A_{\V \phi}\,\Op R_{\V \phi}+\Op R_{\V p}\,\Op B+\Op R_{\V p}\,\Op R_{\V \phi}.
\end{align*}
Theorem~\ref{Thm:Concretization} gives $\Op A_{\V \phi}\Op B=\Op P_{\V \phi}^\ast$ on $\Spc S$. Next,
$\Op A_{\V \phi}\Op R_{\V \phi}\varphi=0$, because
$\operatorname{ran}(\Op R_{\V \phi})=\Spc N_{\V \phi}=\ker\Op A_{\V \phi}$. Property (ii) above gives
$\Op R_{\V p}\Op B\varphi=\sum_n p_n\langle p_n,\Op B\varphi\rangle=0$. Finally,
$\Op R_{\V p}\Op R_{\V \phi}\varphi=\sum_n p_n\langle \varphi,\phi_n\rangle=(\Identity-\Op P_{\V \phi}^\ast)\varphi$, and the sum is
$\Op P_{\V \phi}^\ast+(\Identity-\Op P_{\V \phi}^\ast)=\Identity$. 

\medskip
\emph{Compatibility}:
We now verify Condition (ii) of Corollary~\ref{Cor:AlgebraicInverse}; i.e., that the
identities of Item 2 hold through the Hilbertian extensions of $\Op A_+$ and $\Op B_+$. This is equivalent to having the inclusions $\Op B_+(\Spc S)\subset\Spc H$ and
$\Op A_+(\Spc S)\subset\Spc H'$, where $\Spc H=\overline{(\Spc S,|\cdot|_{\Op A_+})}$ and
$\Spc H'=\overline{(\Spc S,|\cdot|_{\Op B_+})}$. To prove the first inclusion, we take
$\Op B_+\varphi=\Op B\varphi+\Op R_{\V \phi}\varphi$, where
$\Op R_{\V \phi}\varphi\in\Spc S$ and where $\Op B\varphi\in\Spc H_{\Op A}$ is the limit of an
$|\cdot|_{\Op A}$-Cauchy sequence in $\Spc S_{\V p}$ (Definition~\ref{Def:Intrinsic}); by the
norm identity of Item 1, this sequence is also $|\cdot|_{\Op A_+}$-Cauchy, so that
$\Op B_+\varphi\in\Spc H$. For the second one, we write
$\Op A_+\varphi=\Op A_{\V \phi}\varphi+\Op R_\V p \varphi$ and pick a
$|\cdot|_{\Op B}$-Cauchy sequence $(\psi_k)\subset\Spc S$ that represents the class
$\pi\Op A(\Op P_{\V \phi}\varphi)$, so that $\Op P_{\V \phi}^\ast\psi_k\to\Op A_{\V \phi}\varphi$ in $\Spc S'$, the difficulty here being that $\Op P_{\V \phi}^\ast\psi_k \notin \Spc S$. Hence our strategy is to
correct the $\V \phi$-coordinates of the $\psi_k$ with the sequences of the preamble: for each $k$, we set
$\tilde\psi_k\eqdef\psi_k+\sum_n\big(\V p(\varphi)_n-\V \phi(\psi_k)_n\big)\chi^{(n)}_{j(k)}
\in\Spc S$, where the index $j(k)$ is so large that the correction perturbs the
$|\cdot|_{\Op B}$-seminorm, the $\V \phi$-coordinates, and the convergence in $\Spc S'$ by at
most $1/k$. By the norm identity of Item 1, the sequence $(\tilde\psi_k)$ is
$|\cdot|_{\Op B_+}$-Cauchy with $\tilde\psi_k\to\Op A_+\varphi$ in $\Spc S'$; hence
$\Op A_+\varphi\in\Spc H'$. Corollary~\ref{Cor:AlgebraicInverse} then shows that
$(\Op A_+,\Op B_+)$ is a compatible Riesz pair, and Theorem~\ref{Theo:HilbertPositiveDef}
realizes $\Spc H$ and $\Spc H'$ as mutually dual Hilbert--Gelfand spaces whose pairing extends
the canonical one and whose Riesz maps are the mutually inverse isometric extensions of
$\Op A_+$ and $\Op B_+$. This proves Items 3 and 4, up to the identification of the blocks.

\medskip
\emph{Block structure}:
Let $u\in\Spc S_{\V p}$ and $\M c\in\R^{N_0}$. Then
$\langle\Op A_+u,\V \phi^\Tr\M c\rangle
=\langle\Op A_{\V \phi}u,\V \phi^\Tr\M c\rangle+\V p(u)^\Tr\M c=0$, because
$\Op A_{\V \phi}u\in\Spc S'_{\V \phi}$ and $\V p(u)=\V 0$; likewise,
$\langle\Op A_+(\V \phi^\Tr\M c),u\rangle=\langle\V p^\Tr\M c,u\rangle=0$. The norm identity of
Item 1 is therefore an orthogonal decomposition of $\Spc S=\Spc S_{\V p}\oplus\Spc N_{\V \phi}$.
Since $\Spc H$ is realized in $\Spc S'$, the two blocks of any $|\cdot|_{\Op A_+}$-Cauchy
sequence converge separately in $\Spc S'$---the first in $\Spc H_{\Op A}$, and the second in the
finite-dimensional, hence closed, space $\Spc N_{\V \phi}$---which gives
$\Spc H=\Spc H_{\Op A}\oplus\Spc N_{\V \phi}$.

On the dual side, the preamble sequences give $\Spc N_{\V p}\subset\Spc H'$: the sequence
$(\chi^{(n)}_k)$ is $|\cdot|_{\Op B_+}$-Cauchy, because $|\chi^{(n)}_k|_{\Op B}\to0$ and
$\V \phi(\chi^{(n)}_k)\to\V e_n$, and it converges to $p_n$ in $\Spc S'$, with
$\|p_n\|_{\Spc H'}=1$. Likewise, for $g\in\Spc S$, the corrected sequence
$g-\sum_n\V \phi(g)_n\,\chi^{(n)}_k$ converges to $\Op P_{\V \phi}^\ast g$ with
$\|\Op P_{\V \phi}^\ast g\|_{\Spc H'}=|g|_{\Op B}$; since $\Op P_{\V \phi}^\ast(\Spc S)$ is dense in
$\Spc H'_{\Op A}=\Op P_{\V \phi}^\ast(\dot{\Spc H}_{\Op B})$ and $\Spc H'$ is complete, we obtain
$\Spc H'_{\Op A}\subset\Spc H'$. Conversely, the seminorm $|f|_{\Op B}$ depends only on the
class $\pi f$, so that the decomposition $f=\Op P_{\V \phi}^\ast f+\V p^\Tr\V \phi(f)$ of
Lemma~\ref{Lem:Biorthogonal} gives
$\|f\|_{\Spc H'}^2=\|\Op P_{\V \phi}^\ast f\|_{\dot{\Spc H}_{\Op B}}^2+|\V \phi(f)|_2^2$ for every
$f\in\Spc S$, which extends by density and identifies $\Spc H'$ with the orthogonal direct sum
$\Spc H'_{\Op A}\oplus\Spc N_{\V p}$.

\medskip
\emph{Block-diagonal duality and exchange of the blocks}:
Let $h=u+\V \phi^\Tr\M c\in\Spc H$ and $f=v+\V p^\Tr\M d\in\Spc H'$ with
$(u,v)\in\Spc H_{\Op A}\times\Spc H'_{\Op A}$ and $\M c,\M d\in\R^{N_0}$. The extended pairing
splits into four terms: $\langle v,u\rangle$ is the
$\dot{\Spc H}_{\Op B}$--$\Spc H_{\Op A}$ duality of the conditional pair;
$\langle v,\V \phi^\Tr\M c\rangle=0$, because $v$ annihilates $\V \phi$;
$\langle\V p^\Tr\M d,u\rangle=0$, because $u$ annihilates $\V p$; and
$\langle\V p^\Tr\M d,\V \phi^\Tr\M c\rangle=\M d^\Tr\M c$, by biorthogonality. The pairing is
therefore block-diagonal. Finally, since $\Op P_{\V \phi}\phi_n=0$ and by biorthogonality,
$\Op A_+\phi_n=\Op A_{\V \phi}\phi_n+\Op R_{\V p}\phi_n=p_n$, while property (i) of the
preamble gives $\Op B_+p_n=\Op Bp_n+\Op R_{\V \phi}p_n=\phi_n$; on the primary block,
$\Op A_+u=\Op A_{\V \phi}u$ is the intrinsic isometry $\pi\Op A$ in its concrete realization
(Theorem~\ref{Thm:Concretization}). This completes the identification of the Riesz maps and
the proof.
\end{proof}

The augmented pair of Theorem~\ref{Thm:Augmentation} still refers to the biorthogonal family
$\V \phi$, through the complements $\Spc N_{\V \phi}$ and $\Spc H'_{\Op A}=\Op P_{\V \phi}^\ast(\dot{\Spc H}_{\Op B})$. The next result shows that this dependence is confined to the choice of the complements: the two Hilbert--Gelfand spaces themselves are determined by the conditional Riesz pair $(\Op A,\Op B)$ alone, up to an equivalence of norms.

\begin{corollary}[Independence from the biorthogonal family]
\label{Cor:PhiIndependence}
Under the assumptions of Theorem~\ref{Thm:Augmentation}, let $\V \phi,\tilde{\V \phi}\subset\Spc S$ be two families that are biorthogonal to $\V p$, and let $(\Spc H_{\V \phi},\Spc H'_{\V \phi})$ and $(\Spc H_{\tilde{\V \phi}},\Spc H'_{\tilde{\V \phi}})$ denote the corresponding augmented pairs, with norms $|\cdot|_{\Op A_+},|\cdot|_{\Op B_+}$ and $|\cdot|_{\tilde{\Op A}_+},|\cdot|_{\tilde{\Op B}_+}$, respectively. Then, the following properties hold.
\begin{enumerate}
\item \emph{Canonical description}: As subsets of $\Spc S'$,
\begin{align}
\label{Eq:CanonicalHprime}
\Spc H'_{\V \phi}=\pi^{-1}\big(\dot{\Spc H}_{\Op B}\big)=\{f\in\Spc S':\ \pi f\in\dot{\Spc H}_{\Op B}\},
\qquad
\Spc H_{\V \phi}=\Spc H_{\Op A}+\Spc S,
\end{align}
where $\dot{\Spc H}_{\Op B}$ is realized in $\Spc S'/\Spc N_{\V p}$ (Proposition~\ref{Prop:Realization}, Item 3). In particular, $\Spc H'_{\V \phi}=\Spc H'_{\tilde{\V \phi}}$ and $\Spc H_{\V \phi}=\Spc H_{\tilde{\V \phi}}$ as sets.
\item \emph{Equivalence of norms}: The norms $|\cdot|_{\Op B_+}$ and $|\cdot|_{\tilde{\Op B}_+}$ are equivalent on $\Spc H'_{\V \phi}$, and the norms $|\cdot|_{\Op A_+}$ and $|\cdot|_{\tilde{\Op A}_+}$ are equivalent on $\Spc H_{\V \phi}$.
\item \emph{Quotient structure}: For every $\V \phi$, the quotient map $\pi$ induces an isometric isomorphism from $\Spc H'_{\V \phi}/\Spc N_{\V p}$, equipped with the quotient norm $\inf_{q\in\Spc N_{\V p}}|f+q|_{\Op B_+}$, onto $\dot{\Spc H}_{\Op B}$. The orthogonal complement of $\Spc N_{\V p}$ in $\Spc H'_{\V \phi}$ is $\Spc H'_{\Op A}=\Spc S'_{\V \phi}\cap\Spc H'_{\V \phi}$; it is the only object that depends on $\V \phi$.
\end{enumerate}
\end{corollary}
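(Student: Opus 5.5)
The plan is to work entirely from the direct-sum descriptions of Theorem~\ref{Thm:Augmentation}, Item~3: $\Spc H'_{\V \phi}=\Op P_{\V \phi}^\ast(\dot{\Spc H}_{\Op B})\oplus\Spc N_{\V p}$ and $\Spc H_{\V \phi}=\Spc H_{\Op A}\oplus\Spc N_{\V \phi}$. I would first establish the set identities of Item~1, then derive the norm equivalences of Item~2 from the open mapping theorem, and treat Item~3 as a bookkeeping consequence of the orthogonal decomposition.

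\emph{Canonical description.} For $\Spc H'_{\V \phi}$, let $f=v+\V p^\Tr\M d$ with $v\in\Op P_{\V \phi}^\ast(\dot{\Spc H}_{\Op B})$. Then $\pi f=\pi v$, and $\pi v$ lies in $\dot{\Spc H}_{\Op B}$ by Theorem~\ref{Thm:Concretization}, Item~1, since $\Op P_{\V \phi}^\ast$ selects a representative of the class. Conversely, if $f\in\Spc S'$ satisfies $\pi f\in\dot{\Spc H}_{\Op B}$, I would write $f=\Op P_{\V \phi}^\ast f+\V p^\Tr\V \phi(f)$ (Lemma~\ref{Lem:Biorthogonal}). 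Because $\ker\Op P_{\V \phi}^\ast=\Spc N_{\V p}=\ker\pi$, the element $\Op P_{\V \phi}^\ast f$ is the $\V \phi$-annihilating representative of the class $\pi f$, so it belongs to $\Op P_{\V \phi}^\ast(\dot{\Spc H}_{\Op B})$. Hence $f\in\Spc H'_{\V \phi}$. The right-hand side $\pi^{-1}(\dot{\Spc H}_{\Op B})$ makes no reference to $\V \phi$.

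For $\Spc H_{\V \phi}$, the inclusion $\Spc H_{\Op A}+\Spc S\subseteq\Spc H_{\Op A}\oplus\Spc N_{\V \phi}$ holds because every $\varphi\in\Spc S$ splits as $\Op P_{\V \phi}\varphi+\V \phi^\Tr\V p(\varphi)$ with $\Op P_{\V \phi}\varphi\in\Spc S_{\V p}\subset\Spc H_{\Op A}$. The reverse inclusion is immediate since $\Spc N_{\V \phi}\subset\Spc S$. Again the description $\Spc H_{\Op A}+\Spc S$ is free of $\V \phi$, because $\Spc H_{\Op A}$ is (Theorem~\ref{Thm:Concretization}, Item~3).

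\emph{Equivalence of norms.} Both norms on each set are complete Hilbert norms, and each embeds continuously into $\Spc S'$ (Theorem~\ref{Thm:Augmentation}, Item~3). I would apply the closed graph theorem to the identity map $(\Spc H_{\V \phi},|\cdot|_{\Op A_+})\to(\Spc H_{\tilde{\V \phi}},|\cdot|_{\tilde{\Op A}_+})$. If $h_k\to h$ in the first norm and $h_k\to g$ in the second, then both convergences hold in $\Spc S'$, whose weak-$\ast$ topology is Hausdorff, so $g=h$. The graph is therefore closed, the identity is bounded, and by symmetry the norms are equivalent. The same argument applies verbatim to the dual side.

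A more explicit alternative for the dual side uses the formula $\|f\|^2_{\Spc H'}=\|\pi f\|^2_{\dot{\Spc H}_{\Op B}}+|\V \phi(f)|_2^2$. Here one needs that $f\mapsto\V \phi(f)$ is $\tilde{\Op B}_+$-bounded, which holds since $\V \phi(f)=\V \phi(\Op P_{\tilde{\V \phi}}^\ast f)+\V \phi(\V p^\Tr\tilde{\V \phi}(f))$ and $\V \phi$ is continuous on $\Spc H'_{\tilde{\V \phi}}$ as elements of its dual $\Spc H\supset\Spc S$. I expect the closed-graph route to be the least error-prone, and this step is the main point to get right, since it relies on the continuity of both realizations into the same Hausdorff space $\Spc S'$.

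\emph{Quotient structure.} The orthogonal decomposition $\Spc H'_{\V \phi}=\Spc H'_{\Op A}\oplus\Spc N_{\V p}$ gives
\[
\inf_{q\in\Spc N_{\V p}}|f+q|^2_{\Op B_+}=\|\Op P_{\V \phi}^\ast f\|^2_{\Spc H'}=\|\pi f\|^2_{\dot{\Spc H}_{\Op B}},
\]
with the infimum attained at $q=-\V p^\Tr\V \phi(f)$. Hence $\pi$ descends to an isometry of $\Spc H'_{\V \phi}/\Spc N_{\V p}$ into $\dot{\Spc H}_{\Op B}$. It is onto by the set identity of Item~1.

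The orthogonal complement of $\Spc N_{\V p}$ is $\Spc H'_{\Op A}$ by Theorem~\ref{Thm:Augmentation}, Item~3. To identify it with $\Spc S'_{\V \phi}\cap\Spc H'_{\V \phi}$, I would argue both inclusions. First, $\Spc H'_{\Op A}\subset\Spc S'_{\V \phi}$ by construction. Conversely, an element $f$ of the intersection has $\V \phi(f)=\V 0$, so $f=\Op P_{\V \phi}^\ast f\in\Spc H'_{\Op A}$. Since everything else has been shown to be $\V \phi$-free, this complement is the only $\V \phi$-dependent object.
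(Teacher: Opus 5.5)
Your proof is correct. For Items~1 and~3 it follows the paper's argument essentially line by line, and in Item~3 you are slightly more complete: you verify the inclusion $\Spc S'_{\V\phi}\cap\Spc H'_{\V\phi}\subseteq\Spc H'_{\Op A}$, which the paper leaves implicit in the direct sum.

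The genuine difference is in Item~2. You get the equivalence of norms from the closed-graph theorem, using only the set identities of Item~1 and the fact that both augmented spaces are Hilbert spaces continuously embedded in the Hausdorff space $\Spc S'$. This is the same device the paper uses later for the inclusion $\Spc X\subseteq\Spc H_{\Op L}$ in Proposition~\ref{Prop:NativeUniqueness}.

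The paper instead proves explicit two-sided bounds. On the dual side, it combines $|f|_{\tilde{\Op B}_+}^2=\|\pi f\|_{\dot{\Spc H}_{\Op B}}^2+|\tilde{\V\phi}(f)|_2^2$ with the continuity of $f\mapsto\langle f,\tilde\phi_n\rangle$ to get the constant $\sqrt{1+C^2}$; this is your ``more explicit alternative''. On the primal side, it shows that the decompositions $h=u+\V\phi^\Tr\M c=\tilde u+\tilde{\V\phi}^\Tr\tilde{\M c}$ have the same coefficient vector $\tilde{\M c}=\M c$, because $\Spc S\cap\Spc H_{\Op A}=\Spc S_{\V p}$. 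The primary components then differ by the finite-rank shift $\sum_n c_n(\phi_n-\tilde\phi_n)$, which yields the constant $2+2\sum_n\|\phi_n-\tilde\phi_n\|_{\Spc H_{\Op A}}^2$.

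What each route buys:
\begin{itemize}
\item Your route is shorter and treats both sides uniformly.
\item The paper's route quantifies the dependence on $\tilde{\V\phi}-\V\phi$ and shows that the $\Spc N_{\V\phi}$-coefficients are intrinsic.
\item The paper's route is also what Remark~\ref{Rem:RigidityAugmented} appeals to when it allows $\tilde{\V\phi}\subset\Spc H$. In that setting your closed-graph argument would first need the $\tilde{\V\phi}$-space to be complete and continuously embedded in $\Spc S'$. Theorem~\ref{Thm:Augmentation} only supplies this for $\tilde{\V\phi}\subset\Spc S$.
\end{itemize}
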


\begin{proof}
\emph{Canonical description}:
Let $f=\Op P_{\V \phi}^\ast g+q\in\Spc H'_{\Op A}\oplus\Spc N_{\V p}$ with $g\in\dot{\Spc H}_{\Op B}$. Since $\Op P_{\V \phi}^\ast$ selects a representative of the class $g$ and since $\pi q=0$, we have that $\pi f=g\in\dot{\Spc H}_{\Op B}$. Conversely, let $f\in\Spc S'$ be such that $\pi f\in\dot{\Spc H}_{\Op B}$. Lemma~\ref{Lem:Biorthogonal} gives $f=\Op P_{\V \phi}^\ast f+\V p^\Tr\V \phi(f)$, where $\Op P_{\V \phi}^\ast f$ is the representative of the class $\pi f$ that annihilates $\V \phi$, so that $\Op P_{\V \phi}^\ast f\in\Op P_{\V \phi}^\ast(\dot{\Spc H}_{\Op B})=\Spc H'_{\Op A}$. Hence $f\in\Spc H'_{\V \phi}$, which proves the first identity. For the second one, the splitting $\Spc S=\Spc S_{\V p}\oplus\Spc N_{\V \phi}$ of Lemma~\ref{Lem:Biorthogonal} and the inclusion $\Spc S_{\V p}\subset\Spc H_{\Op A}$ give $\Spc H_{\Op A}+\Spc S=\Spc H_{\Op A}+\Spc N_{\V \phi}=\Spc H_{\V \phi}$. Neither description refers to $\V \phi$.

\medskip
\emph{Equivalence of norms}:
Item 3 of Theorem~\ref{Thm:Augmentation} gives $|f|_{\tilde{\Op B}_+}^2=\|\pi f\|_{\dot{\Spc H}_{\Op B}}^2+|\tilde{\V \phi}(f)|_2^2$ for every $f\in\Spc H'_{\V \phi}$. Since $\tilde\phi_n\in\Spc S$, the functional $f\mapsto\langle f,\tilde\phi_n\rangle$ is continuous on $\Spc S'$ and, therefore, on $\Spc H'_{\V \phi}$, which is continuously embedded in $\Spc S'$. Consequently, there is a constant $C$ such that $|\tilde{\V \phi}(f)|_2\le C|f|_{\Op B_+}$, which yields $|f|_{\tilde{\Op B}_+}\le\sqrt{1+C^2}\,|f|_{\Op B_+}$. The reverse inequality follows by exchanging the roles of $\V \phi$ and $\tilde{\V \phi}$. On the dual side, let $h=u+\V \phi^\Tr\M c=\tilde u+\tilde{\V \phi}^\Tr\tilde{\M c}$ be the two decompositions of $h\in\Spc H_{\V \phi}$, with $u,\tilde u\in\Spc H_{\Op A}$ and $\M c,\tilde{\M c}\in\R^{N_0}$, so that $|h|_{\Op A_+}^2=\|u\|_{\Spc H_{\Op A}}^2+|\M c|_2^2$ and $|h|_{\tilde{\Op A}_+}^2=\|\tilde u\|_{\Spc H_{\Op A}}^2+|\tilde{\M c}|_2^2$. The test function $\V \phi^\Tr\M c-\tilde{\V \phi}^\Tr\tilde{\M c}=\tilde u-u$ lies in $\Spc S\cap\Spc H_{\Op A}=\Spc S_{\V p}$ (a test function $\varphi\in\Spc H_{\Op A}$ satisfies $|\langle\varphi,\psi\rangle|\le\|\varphi\|_{\Spc H_{\Op A}}|\psi|_{\Op B}$ by Proposition~\ref{Prop:Realization}, so that $\varphi\in\Spc N_{\Op B}^\perp=\Spc S_{\V p}$ by the Bridge in the proof of Theorem~\ref{Thm:AttainableNull}), so that the application of $\V p$ gives $\tilde{\M c}=\M c$. Hence $\tilde u=u+\sum_n c_n(\phi_n-\tilde\phi_n)$ with $\phi_n-\tilde\phi_n\in\Spc S_{\V p}\subset\Spc H_{\Op A}$, and the triangle inequality yields $|h|_{\tilde{\Op A}_+}^2\le\big(2+2\sum_n\|\phi_n-\tilde\phi_n\|_{\Spc H_{\Op A}}^2\big)|h|_{\Op A_+}^2$. The reverse inequality follows by symmetry.

\medskip
\emph{Quotient structure}:
Since the direct sum $\Spc H'_{\V \phi}=\Spc H'_{\Op A}\oplus\Spc N_{\V p}$ is orthogonal, the distance from $f$ to $\Spc N_{\V p}$ is the norm of its component in $\Spc H'_{\Op A}$, namely $\|\Op P_{\V \phi}^\ast f\|_{\Spc H'_{\Op A}}=\|\pi f\|_{\dot{\Spc H}_{\Op B}}$ by Item 1 of Theorem~\ref{Thm:Concretization}. The map $\pi$ is onto $\dot{\Spc H}_{\Op B}$ by \eqref{Eq:CanonicalHprime} and has null space $\Spc N_{\V p}$, which proves the isometry. The identification of the orthogonal complement is Item 3 of Theorem~\ref{Thm:Augmentation} together with $\Spc H'_{\Op A}\subset\Spc S'_{\V \phi}$.
\end{proof}

\begin{remark}[What the conditional Riesz pair determines]
\label{Rem:RigidityAugmented}
Corollary~\ref{Cor:PhiIndependence} complements the rigidity statement at the end of Section~\ref{Sec:AbstractFormulation}: the conditioning $\V p$ is dictated by $\Op B$, and the resulting Hilbert--Gelfand spaces $\Spc H=\Spc H_{\Op A}+\Spc S$ and $\Spc H'=\pi^{-1}(\dot{\Spc H}_{\Op B})$ are dictated by $(\Op A,\Op B)$ up to an equivalence of norms. The norm itself is not determined by $(\Op A,\Op B)$, nor is it determined by the sets: every continuous symmetric positive-semi-definite operator $\Op C:\Spc S\to\Spc S'$ whose quadratic form is bounded on $\Spc H'$ and positive-definite on $\Spc N_{\V p}$ yields, through $\Op B+\Op C$, the same set $\Spc H'$ with an equivalent norm but with another Riesz pair. What singles out the augmentations $\Op B_+=\Op B+\Op R_{\V \phi}$ among these is Item 3 of the corollary: the quotient map $\pi$ is a partial isometry, in the sense that $|f|_{\Op B_+}=\|\pi f\|_{\dot{\Spc H}_{\Op B}}$ on the orthogonal complement of $\Spc N_{\V p}$. This property is investigated further in Proposition~\ref{Prop:NativeUniqueness}, in the operator-based setting of Section~\ref{Sec:OperatorConstructions}. Let us also note that Corollary~\ref{Cor:PhiIndependence} extends verbatim to biorthogonal families $\tilde{\V\phi}$ taken in the dual $\Spc H$ rather than in $\Spc S$, since its proof only uses the continuity of $f\mapsto\langle f,\tilde\phi_n\rangle$ on $\Spc H'$.
\end{remark}

\subsection{Example Linked to Linear Splines}
\label{Sec:OpD}
\begin{example}[$\Op B=\Op D^\ast\Op D$ on $\Spc S(\R)$]
\label{Ex:LinearSpline}
Let $(\Spc S,\Spc S')=(\Spc S(\R),\Spc S'(\R))$ and $\Op D=\tfrac{\dint}{\dint x}$, so that
$\Op D^\ast=-\Op D$ and
\begin{align*}
\Op B=\Op D^\ast\Op D=-\Op D^2,\qquad \widehat{\Op B\varphi}(\omega)=\omega^2\,\hat\varphi(\omega),
\end{align*}
where $\hat\varphi(\omega)=\int_\R\varphi(x)\,\ee^{-\jj\omega x}\,\dint x$. The operator $\Op B$ is
symmetric and positive-definite on $\Spc S(\R)$, because
$\langle\Op B\varphi,\varphi\rangle=\|\Op D\varphi\|_{L_2}^2>0$ for $\varphi\neq0$. Its
distributional null space $\ker(\Op B:\Spc S'\to\Spc S')=\{a+bx:a,b\in\R\}$ is the space of affine
functions, but only the constants are attainable in the energy: the relevant null space is
$\Spc N_{\Op B}=\Spc N_{\V p}=\mathrm{span}(p_1)$, with
$p_1\equiv1$ and $N_0=1$. Since
$\langle p_1,\varphi\rangle=\int_\R\varphi(x)\dint x=\hat\varphi(0)$,
\begin{align*}
\Spc S_{\V p}=\Big\{\varphi\in\Spc S(\R):\ \int_\R\varphi(x)\dint x=0\Big\}.
\end{align*}

\medskip
\emph{Biorthogonal function and projector}:
Among the infinitely many admissible choices of $\phi_1\in\Spc S$, we select the normalized Gaussian
\begin{align*}
\phi_1(x)=\frac{1}{\sqrt{2\pi}}\,\ee^{-x^2/2}\in\Spc S(\R),
\qquad \langle p_1,\phi_1\rangle=\int_\R\phi_1(x)\,\dint x=1.
\end{align*}
The corresponding
projector of $\Spc S(\R)$ onto $\Spc S_{\V p}$ along
$\Spc N_{\V \phi}=\mathrm{span}(\phi_1)$ is
\begin{align*}
\Op P_{\V \phi}=\Identity-\phi_1\,\langle p_1,\cdot\rangle:\ \varphi\mapsto\varphi-\phi_1\int_\R\varphi(x)\,\dint x,
\end{align*}
with adjoint $\Op P_{\V \phi}^\ast=\Identity-p_1\,\langle\cdot,\phi_1\rangle$. The construction below does not
depend on this particular choice.

\medskip
\emph{Conditional inverse}:
Let $\Op A:\varphi\mapsto g\ast\varphi$ with $g(x)=-\tfrac12|x|=\Spc F^{-1}\{\frac{1}{\omega^2}\}(x)$ be the symmetric convolution inverse of
$\Op B$ that solves $\Op Bg=\delta$; it is such that $\Op A\Op B=\Op B\Op A=\Identity$
on $\Spc S(\R)$. The fundamental solution $g$ is determined only up to an affine function,
which is why we replace $\Op A$ by the conditioned operator
$\Op A_{\V \phi}=\Op P_{\V \phi}^\ast\,\Op A\,\Op P_{\V \phi}$. The latter is insensitive to
that ambiguity and satisfies
\begin{align*}
\Op B\,\Op A_{\V \phi}=\Op P_{\V \phi},\qquad \Op A_{\V \phi}\,\Op B=\Op P_{\V \phi}^\ast,\qquad
\ker\Op A_{\V \phi}=\Spc N_{\V \phi}=\mathrm{span}(\phi_1).
\end{align*}

\medskip
\emph{Conditional positivity}:
Since  $\Op P_{\V \phi}\varphi=\varphi$ for $\varphi\in\Spc S_{\V p}$, the induced quadratic form is that of $\Op A$:
\begin{align*}
\langle\Op A_{\V \phi}\varphi,\varphi\rangle
=\langle\Op A\varphi,\varphi\rangle
=-\tfrac12\iint_{\R^2}|x-y|\,\varphi(x)\varphi(y)\,\dint x\,\dint y
=\frac{1}{2\pi}\int_\R\frac{|\hat\varphi(\omega)|^2}{\omega^2}\,\dint\omega,
\end{align*}
where the last (Fourier) expression is finite and strictly positive on $\Spc S_{\V p}$. The spatial
form applies to every $\varphi\in\Spc S(\R)$ and is strictly negative whenever $\varphi\ge0$ carries
a nonzero mass. Hence $\Op A_{\V \phi}$ and $\Op A$ are $\V p$-conditionally positive-definite but
not positive-definite, whereas $\Op B$ is positive-definite.

\medskip
\emph{Hilbert--Gelfand spaces and Riesz isometry}:
With $|\varphi|_{\Op A}^2=\tfrac{1}{2\pi}\int_\R\frac{|\hat\varphi(\omega)|^2}{\omega^2}\,\dint\omega$ and
$|f|_{\Op B}^2=\|f'\|_{L_2}^2$,
\begin{align*}
\Spc H_{\Op A}=\overline{(\Spc S_{\V p},|\cdot|_{\Op A})}=\dot H^{-1}(\R)\subset\Spc S'(\R),
\qquad
\dot{\Spc H}_{\Op B}=\overline{(\Spc S(\R),|\cdot|_{\Op B})}=\dot H^{1}(\R).
\end{align*}
The space $\Spc H_{\Op A}$ can be identified as the space of distributions with mean zero and Sobolev regularity $-1$. The space $\dot{\Spc H}_{\Op B}=\dot H^1(\R)$ is a space of
functions modulo constants: the energy $\|f'\|_{L_2}$ vanishes on $\Spc N_{\V p}\cong\R$, so that
$\dot{\Spc H}_{\Op B}$ is realized in the quotient $\Spc S'(\R)/\R$
rather
than as a subspace of $\Spc S'(\R)$. The two spaces are dual under the $\Spc S$--$\Spc S'$ pairing,
which is well defined because the mean-zero constraint on $\Spc H_{\Op A}$ makes it insensitive to
the added constant. The Riesz maps $\Op A:\dot H^{-1}(\R)\to\dot H^{1}(\R)$ and
$\Op B:\dot H^{1}(\R)\to\dot H^{-1}(\R)$ are isometric and mutually inverse, since, for
$\varphi\in\Spc S_{\V p}$,
\begin{align*}
|\Op A\varphi|_{\Op B}^2=\langle\Op B\Op A\varphi,\Op A\varphi\rangle=\langle\Op P_{\V \phi}\varphi,\Op A\varphi\rangle=\langle\varphi,\Op A\varphi\rangle=|\varphi|_{\Op A}^2 .
\end{align*}
Thus $g(x-y)=-\tfrac12|x-y|$ is the conditionally positive-definite kernel of the first-order spline
energy $\|f'\|_{L_2}^2$, and $(\Op A,\Op B)$ is a $\V p$-conditional Riesz pair in the sense of
Definition~\ref{Def:Intrinsic}.

\medskip
\emph{Augmentation}:
Theorem~\ref{Thm:Augmentation} produces the unconditioned pair
\begin{align*}
\Op A_+=\Op P_{\V \phi}^\ast\Op A\Op P_{\V \phi}+p_1\langle p_1,\cdot\rangle,
\qquad
\Op B_+=-\Op D^2+\phi_1\langle\cdot,\phi_1\rangle,
\end{align*}
with norms $|\varphi|_{\Op A_+}^2=|\Op P_{\V \phi}\varphi|_{\Op A}^2+\big(\int_\R\varphi(x)\,\dint x\big)^2$ and
$|f|_{\Op B_+}^2=\|f'\|_{L_2}^2+\langle f,\phi_1\rangle^2$. The induced dual pair of
Hilbert--Gelfand spaces is
\begin{align*}
\Spc H&=\dot H^{-1}(\R)\oplus\mathrm{span}(\phi_1),\\
\Spc H'&=\Spc H_\Op D=\{f\in\Spc S'(\R):f'\in L_2(\R)\}=\Spc H'_{\Op A}\oplus\mathrm{span}(1),
\end{align*}
and the exchange of the finite-dimensional blocks reads $\Op A_+\phi_1=p_1=1$ and $\Op B_+1=\phi_1$.
The resulting ``native'' space $\Spc H_\Op D=\overline{(\Spc S(\R),|\cdot|_{\Op D^\ast\Op D +\Op R_\V \phi})}$ is strictly larger than the classical Sobolev space $H^1(\R)$: it contains the constants and, more generally, every $f$ with $f'\in L_2(\R)$, whatever its growth (for instance, $\ln(1+|x|)$). The classical norm $\|f'\|_{L_2}^2+|f(0)|^2$ of this space is of the augmented form with $\tilde\phi_1=\delta$, which is not a test function but a continuous functional on $\Spc H_{\Op D}$; Proposition~\ref{Prop:NativeUniqueness} below explains in which sense the two norms define the same native space.
\end{example}

\section{Conditional Riesz Pairs Generated by an Operator}
\label{Sec:AdmissibleL}
Taking inspiration from the previous example, we now consider a general continuous and
injective operator $\Op L:\Spc S\to L_2$ and form the normal operator
$\Op B=\Op L^\ast\Op L$, with the conditional inverse $\Op A$ determined from the Green's
function $(\Op L^\ast\Op L)^{-1}$ of the normal operator. There are two routes. When a
Green inverse of $\Op B$ is available, Theorem~\ref{Thm:AttainableNull} applies
directly and reduces the admissibility of $\Op L$ to the correct delineation of the attainable
null space. In the general case, Theorem~\ref{Thm:NormalPair} below characterizes the
admissible operators $\Op L$ intrinsically, without any a priori knowledge of $\Op A$.

\begin{corollary}[Green inverse of the normal operator]
\label{Cor:GreenNormal}
Let $\V p\subset\Spc S'$ be a linearly independent family, let $\Op L:\Spc S\to L_2$ be a
continuous and injective linear operator, and let us assume
that the normal operator $\Op B=\Op L^\ast\Op L:\Spc S\to\Spc S'$ admits a
Green
inverse $\Op A=(\Op L^\ast\Op L)^{-1}:\Spc S\to\Spc S'$ through the Hilbertian extensions, in
the sense that $\Op A\Op B=\Identity$ on $\Spc S$ and $\Op B\Op A=\Identity$ on
$\Spc S_{\V p}$. Then, the following statements are equivalent.
\begin{enumerate}
\item[(i)] $(\Op A,\Op B)$ is a $\V p$-conditional Riesz pair for $(\Spc S,\Spc S')$.
\item[(ii)] The family $\V p$ spans the attainable null space of the spline energy:
$\Spc N_{\V p}=\Spc N_{\Op B}$.
\item[(iii)] The detrended space is the finiteness domain of the dual energy:
\begin{align*}
\Spc S_{\V p}
=\Big\{\varphi\in\Spc S:\ \sup_{\psi\in\Spc S\backslash\{0\}}
\frac{|\langle\varphi,\psi\rangle|}{\|\Op L\psi\|_{L_2}}<\infty\Big\}.
\end{align*}
\end{enumerate}
\end{corollary}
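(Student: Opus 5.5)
The corollary is a specialization of Theorem~\ref{Thm:AttainableNull} to $\Op B=\Op L^\ast\Op L$, so the plan is to check its hypotheses and then translate each item into the language of $\Op L$.

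\textbf{Checking the hypotheses.} Since $\Op L:\Spc S\to L_2$ is continuous, $\Op B:\Spc S\to\Spc S'$ defined by $\langle\Op B\varphi,\psi\rangle=\langle\Op L\varphi,\Op L\psi\rangle_{L_2}$ is continuous. It is symmetric by construction. It is positive-definite because $\langle\Op B\varphi,\varphi\rangle=\|\Op L\varphi\|_{L_2}^2$, which is strictly positive for $\varphi\neq0$ by the injectivity of $\Op L$. In particular $|\psi|_{\Op B}=\|\Op L\psi\|_{L_2}$. The inversion identities $\Op A\Op B=\Identity$ on $\Spc S$ and $\Op B\Op A=\Identity$ on $\Spc S_{\V p}$ are assumed outright. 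Hence all hypotheses of Theorem~\ref{Thm:AttainableNull} hold, and its items (i), (ii), (iii) become mutually equivalent.

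\textbf{Translating the items.} Item (ii) of the corollary is item (i) of the theorem verbatim. Item (i) of the corollary is item (ii) of the theorem. For item (iii), I use the Bridge in the proof of Theorem~\ref{Thm:AttainableNull}, which was established under exactly these hypotheses and without assuming any of (i)--(iii). It shows that
\[
\{\varphi\in\Spc S:\ \pi\Op A\varphi\in\dot{\Spc H}_{\Op B}\}=\{\varphi:\ \|\varphi\|_{\Op A}^\ast<\infty\}=\Spc N_{\Op B}^\perp,
\]
where $\|\varphi\|^\ast_{\Op A}=\sup_\psi|\langle\varphi,\psi\rangle|/|\psi|_{\Op B}$ is precisely the supremum in (iii) once $|\psi|_{\Op B}=\|\Op L\psi\|_{L_2}$ is substituted.

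\textbf{Equivalence of (ii) and (iii).} Item (iii) of the corollary therefore reads $\Spc N_{\V p}^\perp=\Spc N_{\Op B}^\perp$. One direction is immediate: (ii) gives this equality directly. For the converse, I repeat the argument of (iii)$\Rightarrow$(i) in the theorem. The bipolar theorem gives $\Spc N_{\V p}=\overline{\Spc N_{\Op B}}^{\,w\ast}$, because $\Spc N_{\V p}$ is finite-dimensional and hence weak-$\ast$ closed. Then $\Spc N_{\Op B}\subseteq\Spc N_{\V p}$ is finite-dimensional, hence closed, so $\Spc N_{\Op B}=\Spc N_{\V p}$. This is why I go through the Bridge rather than item (iii) of the theorem itself: the theorem's (iii) also bundles conditional positive-definiteness, whereas the corollary's (iii) is only the domain identity.

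\textbf{Main obstacle.} The only delicate point is making sure the Bridge is genuinely independent of (i). Its divergence argument, for $\varphi\notin\Spc N_{\Op B}^\perp$, uses only the definition of $\Spc N_{\Op B}$. Its finiteness argument, for $\varphi\in\Spc N_{\Op B}^\perp$, uses only $\Op A\Op B=\Identity$ on $\Spc S$. So it is available before (i) is known. I would state this explicitly and then conclude.
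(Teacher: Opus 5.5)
Your proposal is correct and is essentially the paper's own proof: Theorem~\ref{Thm:AttainableNull} for (i)$\Leftrightarrow$(ii), the Bridge (with $|\psi|_{\Op B}=\|\Op L\psi\|_{L_2}$) to rewrite (iii) as $\Spc N_{\V p}^\perp=\Spc N_{\Op B}^\perp$, and the bipolar argument of Step (iii)$\Rightarrow$(i) to return to (ii). One nuance on your ``main obstacle'': the finiteness half of the Bridge, $\Spc N_{\Op B}^\perp\subseteq\{Q<\infty\}$, is asserted there rather than actually derived from $\Op A\Op B=\Identity$, but the only place you need more than $\Spc S_{\V p}\subseteq\{Q<\infty\}$ (which $\Op B\Op A=\Identity$ on $\Spc S_{\V p}$ supplies) is the inclusion $\Spc N_{\V p}\subseteq\Spc N_{\Op B}$ in (iii)$\Rightarrow$(ii), and that inclusion follows from (iii) and the injectivity of $\Op L$ by the annihilator argument of Step~1 in the proof of Theorem~\ref{Thm:PracticalChecklist}.
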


\begin{proof}
The operator $\Op B$ is symmetric and positive-definite with
$|\varphi|_{\Op B}=\|\Op L\varphi\|_{L_2}$, which is a genuine norm because $\Op L$ is
injective. The equivalence of (i) and (ii) is then Theorem~\ref{Thm:AttainableNull}. For the
equivalence with (iii), the Bridge in the proof of that theorem identifies the finiteness
domain of the dual energy with $\Spc N_{\Op B}^\perp$, so that (iii) reads
$\Spc N_{\V p}^\perp=\Spc N_{\Op B}^\perp$; the bipolar argument of Step
(iii)$\Rightarrow$(i) of the same proof then converts this identity into (ii).
\end{proof}

When no Green inverse is available a priori, the existence of an admissible conditional
inverse can still be characterized intrinsically, in terms of $\Op L$ alone, as shown in Theorem \ref{Thm:NormalPair}. The latter makes use of the following result.

\begin{lemma}[Riesz representation criterion]
\label{Lem:UnconditionedCriterion}
Let $(\Spc S,\Spc S')$ be a nuclear Gelfand pair, $\Spc K$ be a Hilbert space, and 
$\Op T:\Spc S\to\Spc K$ be a continuous and injective linear operator, with
$\Spc R_{\Op T}\eqdef\overline{\Op T(\Spc S)}^{\,\Spc K}$. Then, there exists a continuous
operator $\Op A:\Spc S\to\Spc S'$ such that $(\Op A,\Op T^\ast\Op T)$ is a compatible Riesz
pair for $(\Spc S,\Spc S')$ if and only if the two following conditions hold.
\begin{enumerate}
\item[(a)] \emph{(Representation)} For every $\varphi\in\Spc S$, there exists
$e_\varphi\in\Spc R_{\Op T}$ such that $\Op T^\ast e_\varphi=\iota\varphi$.
\item[(b)] \emph{(Sharpness)} The subspace $\Spc E\eqdef\{e_\varphi:\varphi\in\Spc S\}$ is
dense in $\Spc R_{\Op T}$.
\end{enumerate}
When they hold, the form of the (unique) admissible operator is
$\langle\Op A\varphi,\psi\rangle=\langle e_\varphi,e_\psi\rangle_{\Spc K}$, so that
$|\varphi|_{\Op A}=\|e_\varphi\|_{\Spc K}$, and the extension of $\Op T$ is an isometric
isomorphism $\widetilde{\Op T}:\Spc H_{\Op T^\ast\Op T}\to\Spc R_{\Op T}$.
\end{lemma}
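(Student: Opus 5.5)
We prove both directions by working in the Hilbert space $\Spc H_{\Op B}\eqdef\overline{(\Spc S,|\cdot|_{\Op B})}$ with $\Op B\eqdef\Op T^\ast\Op T$. Since $|\varphi|_{\Op B}=\|\Op T\varphi\|_{\Spc K}$ and $\Op T$ is injective, $\Op B$ is continuous, symmetric and positive-definite. Moreover, $\Op T$ extends to an isometry $\widetilde{\Op T}:\Spc H_{\Op B}\to\Spc K$ whose range is the closure of $\Op T(\Spc S)$, namely $\Spc R_{\Op T}$. Hence $\widetilde{\Op T}:\Spc H_{\Op B}\to\Spc R_{\Op T}$ is an isometric isomorphism, whatever the outcome; this gives the last claim once the rest is settled. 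The plan is to translate the Riesz-pair property, via Corollary~\ref{Cor:AlgebraicInverse}(iii), into the statement that the dual norm $\sup_{\psi}|\langle\varphi,\psi\rangle|/|\psi|_{\Op B}$ is finite and is generated by a positive-definite operator $\Op A$.

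\emph{Sufficiency.} Assume (a) and (b), and define $\Op A$ by $\langle\Op A\varphi,\psi\rangle\eqdef\langle e_\varphi,e_\psi\rangle_{\Spc K}$.
\begin{enumerate}
\item[1.] Well-definedness and linearity. The representer is unique in $\Spc R_{\Op T}$: if $\Op T^\ast e=0$ with $e\in\Spc R_{\Op T}$, then $\langle e,\Op T\psi\rangle_{\Spc K}=0$ for all $\psi$, so $e\perp\Spc R_{\Op T}$ and $e=0$. Hence $\varphi\mapsto e_\varphi$ is linear.
\item[2.] Continuity. The graph of $\varphi\mapsto e_\varphi$ is closed, because $\Op T^\ast:\Spc K\to\Spc S'$ is continuous. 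I would then use the closed-graph theorem for $\Spc S$ (nuclear Fréchet in the cases of interest), or, more generally, note that $\langle e_\varphi,\Op T\psi\rangle_{\Spc K}=\langle\varphi,\psi\rangle$ and argue weak-to-norm boundedness. This makes $\Op A:\Spc S\to\Spc S'$ continuous.
\item[3.] Positive-definiteness. $|\varphi|_{\Op A}^2=\|e_\varphi\|_{\Spc K}^2>0$ for $\varphi\neq0$, since $e_\varphi=0$ forces $\iota\varphi=0$.
\item[4.] Norm duality. From the representation identity and the density of $\Op T(\Spc S)$ in $\Spc R_{\Op T}$,
\[
\sup_{\psi\neq0}\frac{|\langle\varphi,\psi\rangle|}{|\psi|_{\Op B}}=\|e_\varphi\|_{\Spc K}=|\varphi|_{\Op A},
\]
which is the second identity of Corollary~\ref{Cor:AlgebraicInverse}(iii).
\item[5.] The other identity. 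We need $|\psi|_{\Op B}=\sup_\varphi|\langle\varphi,\psi\rangle|/\|e_\varphi\|_{\Spc K}$, where $\langle\varphi,\psi\rangle=\langle e_\varphi,\Op T\psi\rangle_{\Spc K}$. The supremum equals $\|\Op T\psi\|_{\Spc K}$ exactly when $\Spc E$ is dense in $\Spc R_{\Op T}$, which is (b).
\end{enumerate}
Corollary~\ref{Cor:AlgebraicInverse} then gives the compatible Riesz pair, and its uniqueness clause gives the uniqueness of $\Op A$.

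\emph{Necessity.} Assume $(\Op A,\Op B)$ is a compatible Riesz pair. By Theorem~\ref{Theo:HilbertPositiveDef}, $\Spc H_{\Op B}$ is a Hilbert--Gelfand space and $\Op A\varphi\in\Spc H_{\Op B}$. Set $e_\varphi\eqdef\widetilde{\Op T}\Op A\varphi\in\Spc R_{\Op T}$. Then
\[
\langle\Op T^\ast e_\varphi,\psi\rangle=\langle\Op A\varphi,\psi\rangle_{\Spc H_{\Op B}}=\langle\varphi,\psi\rangle,
\]
using \eqref{Eq:DualityProdH} and the fact that $\Op B$ is the Riesz map of $\Spc H_{\Op B}'$. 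This gives (a). Next, $\Op A(\Spc S)$ is dense in $\Spc H_{\Op B}$, because $\Op A$ is an isometric isomorphism from $\Spc H_{\Op B}'$, in which $\Spc S$ is dense. Transporting this density by $\widetilde{\Op T}$ gives (b), and the formula $\langle\Op A\varphi,\psi\rangle=\langle e_\varphi,e_\psi\rangle_{\Spc K}$ follows from the isometry.

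The main obstacle is step 2, the continuity of $\Op A$ without assuming that $\Spc S$ is metrizable. I would first try the closed-graph or Banach--Steinhaus route based on the Montel/barrelled property. Failing that, I would show directly that $\psi\mapsto\langle e_\varphi,e_\psi\rangle_{\Spc K}$ is continuous for each fixed $\varphi$, which suffices for weak-$\ast$ continuity into $\Spc S'$.
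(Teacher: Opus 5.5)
Your proposal is correct. Its sufficiency half follows the paper closely: uniqueness of the representer in $\Spc R_{\Op T}$, the dual-norm identity $\sup_{\psi\neq0}|\langle\varphi,\psi\rangle|/|\psi|_{\Op B}=\|e_\varphi\|_{\Spc K}$, and the reduction of the other identity of Corollary~\ref{Cor:AlgebraicInverse}(iii) to the orthogonal projection of $\Op T\psi$ onto $\overline{\Spc E}$. Two things differ.

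For continuity, the paper uses Banach--Steinhaus instead of a closed graph. Condition (a) gives $\sup_{\psi\neq0}|\langle\varphi,\psi\rangle|/|\psi|_{\Op B}<\infty$ for every $\varphi$. The family $\{\iota\psi/|\psi|_{\Op B}\}_{\psi\neq0}$ is therefore pointwise bounded on the barrelled space $\Spc S$, hence equicontinuous, and $\varphi\mapsto\|e_\varphi\|_{\Spc K}$ is a continuous seminorm. So the step you flag as the main obstacle is not one. Your closed-graph argument also works without metrizability, which matters because $\Spc D(\Omega)$ is not Fr\'echet: Pt\'ak's closed-graph theorem for barrelled domains and Banach ranges, which the paper invokes in Step~4 of the appendix, applies directly to $\varphi\mapsto e_\varphi$.

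For necessity, the paper first proves the pointwise criterion \eqref{Eq:RepresenterCriterion}: a representer exists iff $Q(\varphi)<\infty$, with $Q(\varphi)=\|e_\varphi\|_{\Spc K}^2$. It then reads (a) and (b) off the same two identities of Corollary~\ref{Cor:AlgebraicInverse}(iii) used for sufficiency. You instead construct $e_\varphi\eqdef\widetilde{\Op T}\Op A\varphi$ and transport the density of $\Op A(\Spc S)$ in $\Spc H_{\Op B}$.

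Your route is more explicit. It exhibits the representer map as a composite of isometries, which is exactly the relation $e_\varphi=\widetilde{\Op L}\,\pi\Op A\varphi$ that reappears in Theorem~\ref{Thm:NormalPair}. However, it leans on the Riesz-map identifications of Theorem~\ref{Theo:HilbertPositiveDef}:
\begin{enumerate}
\item the extension of $\Op B$ is the Riesz map of $\Spc H_{\Op B}$, equivalently the inverse Riesz map of $\Spc H_{\Op B}'=\Spc H_{\Op A}$, rather than the Riesz map ``of $\Spc H_{\Op B}'$'' as you wrote;
\item the extension of $\Op A$ is the Riesz map of $\Spc H_{\Op A}$, which is what turns $\langle\Op A\varphi,\Op A\psi\rangle_{\Spc H_{\Op B}}$ into $\langle\Op A\varphi,\psi\rangle$ in your Gram formula.
\end{enumerate}
The paper's route runs both directions through the same two identities. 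It also yields \eqref{Eq:RepresenterCriterion} as a by-product, which is reused in Theorem~\ref{Thm:PracticalChecklist} and Proposition~\ref{Prop:MultiplierConstruction}.
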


\begin{proof}
We write $\Op B=\Op T^\ast\Op T$, so that $|\psi|_{\Op B}=\|\Op T\psi\|_{\Spc K}$ is a norm on
$\Spc S$ by the injectivity of $\Op T$, and we introduce the dual energy
\begin{align*}
Q(\varphi)\eqdef\sup_{\psi\in\Spc S\backslash\{0\}}
\frac{|\langle\varphi,\psi\rangle|^2}{|\psi|_{\Op B}^2}\in[0,+\infty].
\end{align*}

\medskip
\emph{Representers}:
The identity $\langle\Op T^\ast w,\psi\rangle=\langle w,\Op T\psi\rangle_{\Spc K}$ gives
$\ker(\Op T^\ast)=\Spc R_{\Op T}^\perp$, so that $\Op T^\ast$ is injective on
$\Spc R_{\Op T}$. For $\varphi\in\Spc S$ fixed, we then consider the linear functional 
\begin{align*}
\ell_\varphi:\Op T(\Spc S)\to\R,
\qquad
\ell_\varphi(\Op T\psi)\eqdef\langle\varphi,\psi\rangle ,
\end{align*}
which is well defined because $\Op T$ is injective. Its norm on
$\Op T(\Spc S)\subseteq\Spc R_{\Op T}$ is
\begin{align*}
\|\ell_\varphi\|
=\sup_{\psi\in\Spc S\backslash\{0\}}
\frac{|\langle\varphi,\psi\rangle|}{\|\Op T\psi\|_{\Spc K}}
=\sqrt{Q(\varphi)} ,
\end{align*}
so that $\ell_\varphi$ is continuous exactly when $Q(\varphi)<\infty$. In that case,
$\ell_\varphi$ extends uniquely  with the same norm to $\Spc R_{\Op T}$. Since $\Spc R_\Op T$ is a closed subspace
of $\Spc K$, it is itself a Hilbert space. The Riesz representation theorem on
$\Spc R_{\Op T}$ then provides a unique $e_\varphi\in\Spc R_{\Op T}$ such that
\begin{align*}
\ell_\varphi(v)=\langle e_\varphi,v\rangle_{\Spc K}
\quad\text{for all }v\in\Spc R_{\Op T},
\qquad
\|e_\varphi\|_{\Spc K}=\|\ell_\varphi\|=\sqrt{Q(\varphi)} .
\end{align*}
The choice $v=\Op T\psi$ gives
$\langle e_\varphi,\Op T\psi\rangle_{\Spc K}=\langle\varphi,\psi\rangle$ for every
$\psi\in\Spc S$, which is the identity $\Op T^\ast e_\varphi=\iota\varphi$. Conversely, the
Cauchy--Schwarz inequality applied to that identity gives
$Q(\varphi)\le\|e_\varphi\|^2_{\Spc K}<\infty$. We have therefore established the pointwise
criterion
\begin{align}
\label{Eq:RepresenterCriterion}
Q(\varphi)<\infty
\qquad\Longleftrightarrow\qquad
\exists\,e_\varphi\in\Spc R_{\Op T}:\ \Op T^\ast e_\varphi=\iota\varphi ,
\end{align}
in which case $e_\varphi$ is unique, $Q(\varphi)=\|e_\varphi\|^2_{\Spc K}$, and
\begin{align}
\label{Eq:Reproducing}
\langle\varphi,\psi\rangle=\langle e_\varphi,\Op T\psi\rangle_{\Spc K},
\qquad\psi\in\Spc S,
\end{align}
which we refer to as the reproducing identity. Equivalently,
\begin{align*}
e_\varphi=\big(\Op T^\ast|_{\Spc R_{\Op T}}\big)^{-1}\iota\varphi ,
\end{align*}
where the restriction to $\Spc R_{\Op T}$ is what makes $\Op T^\ast$ invertible. The criterion
\eqref{Eq:RepresenterCriterion} trades the computation of a supremum for the resolution of the
equation $\Op T^\ast e=\iota\varphi$ in $\Spc R_{\Op T}$, the divergent case included: no
solution means $Q(\varphi)=\infty$. Condition (a) requires
\eqref{Eq:RepresenterCriterion} to hold for every $\varphi\in\Spc S$.
The target $\Spc K$ enters only through $\Spc R_{\Op T}$, which is why no density assumption
on $\Op T(\Spc S)$ is needed.

\medskip
\emph{Continuity}:
Let us assume (a). The family $\{|\psi|_{\Op B}^{-1}\iota\psi\}_{\psi\neq0}$ is pointwise
bounded on $\Spc S$, which is barrelled, so that the Banach--Steinhaus theorem for barrelled
spaces \cite[Ch.~III]{Schaefer-Wolff1999} makes $\varphi\mapsto\sqrt{Q(\varphi)}$ a continuous
seminorm. The Gram form
$(\varphi,\psi)\mapsto\langle e_\varphi,e_\psi\rangle_{\Spc K}$ therefore specifies a
continuous operator $\Op A:\Spc S\to\Spc S'$, which is positive-definite because
$\Op T^\ast e_\varphi=\iota\varphi\neq0$ for $\varphi\neq0$.

\medskip
\emph{The two dual norms}:
The identity $|\varphi|_{\Op A}=\|e_\varphi\|_{\Spc K}=\sqrt{Q(\varphi)}$ is the second
identity of Corollary~\ref{Cor:AlgebraicInverse}(iii). For the first one, the reproducing
identity converts the supremum into the norm of an orthogonal projection,
\begin{align*}
\sup_{\varphi\in\Spc S\backslash\{0\}}
\frac{|\langle\varphi,\psi\rangle|}{|\varphi|_{\Op A}}
=\sup_{\varphi\neq0}
\frac{|\langle e_\varphi,\Op T\psi\rangle_{\Spc K}|}{\|e_\varphi\|_{\Spc K}}
=\big\|\Op Q_{\Spc E}\,\Op T\psi\big\|_{\Spc K},
\end{align*}
where $\Op Q_{\Spc E}$ denotes the orthogonal projector onto $\overline{\Spc E}$. This quantity
equals $|\psi|_{\Op B}=\|\Op T\psi\|_{\Spc K}$ for every $\psi\in\Spc S$ if and only if
$\Op T(\Spc S)\subseteq\overline{\Spc E}$; that is, if and only if (b) holds.

\medskip
\emph{Conclusion}:
Under (a) and (b), Corollary~\ref{Cor:AlgebraicInverse}(iii) is met, so that $(\Op A,\Op B)$
is a compatible Riesz pair, with $\Op A$ unique and symmetric by
Remark~\ref{Rem:AutomaticSymmetry}. Conversely, the same corollary applied to such a pair
supplies the two identities: the second gives $Q<\infty$ on $\Spc S$, hence (a), and the first
gives $\Op T(\Spc S)\subseteq\overline{\Spc E}$, hence (b). Finally, $\widetilde{\Op T}$ is the isometric extension of $\psi\mapsto\Op T\psi$ from the
dense subspace $\Spc S$ of $\Spc H_{\Op B}$, whose range is $\Spc R_{\Op T}$.
\end{proof}

\begin{remark}[Relation to the theory of Hilbertian subspaces]
\label{Rem:SchwartzHilbertian}
Lemma~\ref{Lem:UnconditionedCriterion} is a reformulation, in terms of the energy rather than
of the kernel, of the classical correspondence between the Hilbertian subspaces of a locally
convex space and the positive kernels that they induce \cite{Schwartz1964}. The same comment
applies to Theorem~\ref{Theo:HilbertPositiveDef}, which is the version of that correspondence
that is adapted to a nuclear Gelfand pair. The realization
$\Spc H_{\Op A}=\Op T^\ast(\Spc R_{\Op T})$ with the transported norm is the operator-range
construction of \cite{Fillmore-Williams1971}, while the reproducing-kernel counterpart is due
to Aronszajn \cite{Aronszajn1950}. What is specific to our formulation is that the two
conditions are stated on $\|\Op T\cdot\|_{\Spc K}$, which is the regularization energy of the
variational problem, and that they survive the conditioning. The classical correspondence, by
contrast, fails precisely when the energy is only a seminorm, which is the situation of the
Beppo--Levi spaces \cite{Deny-Lions1954}. The augmentation $\Op B_+=\Psi^\ast\Psi$ in the
proof of Theorem~\ref{Thm:NormalPair} is what restores its applicability.
\end{remark}

\begin{theorem}[$\V p$-conditional Riesz pairs generated by $\Op L$]
\label{Thm:NormalPair}
Let $\Op L:\Spc S\to L_2$ be a continuous and injective linear operator with adjoint
$\Op L^\ast:L_2\to\Spc S'$, and set
\begin{align*}
\Op B=\Op L^\ast\Op L:\Spc S\to\Spc S',
\qquad
\Spc R=\overline{\Op L(\Spc S)}^{\,L_2},
\end{align*}
so that $\Op B$ is symmetric and positive-definite with
$|\varphi|_{\Op B}=\|\Op L\varphi\|_{L_2}$, and
$\ker(\Op L^\ast)=\Spc R^{\perp}$. Then, there exists a continuous operator
$\Op A:\Spc S\to\Spc S'$ such that $(\Op A,\Op B)$ is a $\V p$-conditional Riesz pair for
$(\Spc S,\Spc S')$ (Definition~\ref{Def:Intrinsic}) if and only if the following three
conditions hold.
\begin{enumerate}
\item \emph{Attainable annihilation of $\Spc N_{\V p}$}:
 for every $n\in\{1,\dots,N_0\}$,
there exists a sequence $(\chi^{(n)}_k)\subset\Spc S$ such that
$\|\Op L\chi^{(n)}_k\|_{L_2}\to0$ and
$\lim_{k\to \infty}\chi^{(n)}_k=p_n$ in $\Spc S'$.
\item \emph{Riesz representation of the conditioned test functions:} for every
$\varphi\in\Spc S_{\V p}$, there exists a unique $e_\varphi\in\Spc R$ such that
$\Op L^\ast e_\varphi=\iota\varphi$; equivalently, there exists $C_\varphi<\infty$ such that
\begin{align*}
|\langle\psi,\varphi\rangle|\le C_\varphi\,\|\Op L\psi\|_{L_2},
\qquad\psi\in\Spc S.
\end{align*}
\item \emph{Sharpness:}
\begin{align*}
\|\Op L\psi\|_{L_2}
\le\sup_{\varphi\in\Spc S_{\V p}\backslash\{0\}}
\frac{|\langle\psi,\varphi\rangle|}{\|e_\varphi\|_{L_2}},
\qquad\psi\in\Spc S;
\end{align*}
equivalently, the subspace
$\Spc E\eqdef\{e_\varphi:\varphi\in\Spc S_{\V p}\}$ is dense in $\Spc R$.
\end{enumerate}
In this case, $\Spc N_{\Op B}=\Spc N_{\V p}$; the optimal constant in Item 2 is
$C_\varphi=\|e_\varphi\|_{L_2}=|\varphi|_{\Op A}$, where $|\cdot|_{\Op A}$ is the norm that is dual to $\|\Op L\cdot\|_{L_2}$; the form of every
admissible $\Op A$ is
$\langle\Op A\varphi,\psi\rangle=\langle e_\varphi,e_\psi\rangle_{L_2}$ on
$\Spc S_{\V p}\times\Spc S_{\V p}$; and the extension of $\psi\mapsto\Op L\psi$ is an isometric
isomorphism $\widetilde{\Op L}:\dot{\Spc H}_{\Op B}\to\Spc R$, so that
$\widetilde{\Op L}(\dot{\Spc H}_{\Op B})=\Spc R$ and $\Spc H_{\Op A}\cong\Op L^\ast(\Spc R)$. Under these
identifications, the representer map $\varphi\mapsto e_\varphi$ from $\Spc S_{\V p}$ to $\Spc R$
coincides with $\widetilde{\Op L}\circ\pi\Op A$; equivalently, the map
$\pi\Op A:\Spc S_{\V p}\to\Spc S'/\Spc N_{\V p}$, which extends to the isometry
$\Spc H_{\Op A}\to\dot{\Spc H}_{\Op B}$, is given by $\pi\Op A\varphi=\widetilde{\Op L}^{-1}e_\varphi$
(the representer $e_\varphi$ itself lives in the intermediate space $\Spc R$).
\end{theorem}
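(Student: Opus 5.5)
The plan is to prove the two directions separately. Both rest on the same construction as in Lemma~\ref{Lem:UnconditionedCriterion}: the dual energy
$Q(\varphi)=\sup_{\psi\neq0}|\langle\varphi,\psi\rangle|^2/\|\Op L\psi\|_{L_2}^2$ and the representers produced by the Riesz representation theorem in the closed subspace $\Spc R$. The difference is that everything is now restricted to the detrended space $\Spc S_{\V p}$.

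The isometric extension $\widetilde{\Op L}:\dot{\Spc H}_{\Op B}\to\Spc R$ is common to both directions. It comes for free, since $|\psi|_{\Op B}=\|\Op L\psi\|_{L_2}$ and $\Op L(\Spc S)$ is dense in $\Spc R$, so it is onto. The equivalence of the two formulations in Item~2 is exactly the pointwise criterion \eqref{Eq:RepresenterCriterion}, with $\Op T=\Op L$ and $\Spc K=L_2$, applied for each $\varphi\in\Spc S_{\V p}$. It also gives $C_\varphi=\sqrt{Q(\varphi)}=\|e_\varphi\|_{L_2}$ and the reproducing identity $\langle\varphi,\psi\rangle=\langle e_\varphi,\Op L\psi\rangle_{L_2}$. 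By the same projection computation as in that lemma, the supremum in Item~3 equals $\|\Op Q_{\Spc E}\Op L\psi\|_{L_2}$, so the two formulations of sharpness coincide.

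\emph{Necessity.} Let $(\Op A,\Op B)$ be a $\V p$-conditional Riesz pair.
\begin{itemize}
\item Clause~(i) of Definition~\ref{Def:Intrinsic} gives $\Spc N_{\V p}=\Spc N_{\Op B}$, and the definition of the attainable null space then yields the sequences of Item~1.
\item The duality bound of Proposition~\ref{Prop:Realization} gives Item~2 with $C_\varphi\le|\varphi|_{\Op A}$.
\item For sharpness, I would combine the identity $\langle\varphi,\psi\rangle=\langle\pi\Op A\varphi,\pi\psi\rangle_{\dot{\Spc H}_{\Op B}}$ from the same proposition with the density of $\pi\Op A(\Spc S_{\V p})$ in $\dot{\Spc H}_{\Op B}$. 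This density holds because $\pi\Op A$ is an isometric isomorphism. Together they give $\sup_{\varphi}|\langle\varphi,\psi\rangle|/|\varphi|_{\Op A}=|\psi|_{\Op B}$.
\item Transporting by $\widetilde{\Op L}$ identifies $e_\varphi=\widetilde{\Op L}\pi\Op A\varphi$. This follows from $\langle\widetilde{\Op L}\pi\Op A\varphi,\Op L\psi\rangle_{L_2}=\langle\varphi,\psi\rangle$ and the uniqueness of representers in $\Spc R$. Hence $\|e_\varphi\|=|\varphi|_{\Op A}$, and Item~3 follows.
\end{itemize}
The same computation yields the Gram-form description of $\Op A$ on $\Spc S_{\V p}\times\Spc S_{\V p}$ and the remaining identifications stated after the theorem.

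\emph{Sufficiency.} Assume Items~1--3.
\begin{enumerate}
\item \emph{The null space.} Item~1 gives $\Spc N_{\V p}\subseteq\Spc N_{\Op B}$. Conversely, let $p=\lim\chi_k$ with $\|\Op L\chi_k\|\to0$. Item~2 gives $|\langle\varphi,\chi_k\rangle|\le C_\varphi\|\Op L\chi_k\|\to0$ for every $\varphi\in\Spc S_{\V p}$. Hence $p\in\Spc S_{\V p}^\perp=\Spc N_{\V p}$, the latter being finite-dimensional and so weak-$\ast$ closed. This gives Clause~(i).
\item \emph{Continuity of $Q$.} The space $\Spc S_{\V p}$ is a closed subspace of finite codimension in the barrelled space $\Spc S$, hence barrelled. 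Item~2 makes $\{\iota\psi/\|\Op L\psi\|\}$ pointwise bounded on $\Spc S_{\V p}$, so Banach--Steinhaus makes $\sqrt{Q}$ a continuous norm there.
\item \emph{Definition of $\Op A$ on $\Spc S_{\V p}$.} For $\varphi\in\Spc S_{\V p}$, let $g_\varphi\eqdef\widetilde{\Op L}^{-1}e_\varphi\in\dot{\Spc H}_{\Op B}$. This class defines an element of $(\Spc S_{\V p})'\cong\Spc S'/\Spc N_{\V p}$ through $\chi\mapsto\langle e_\varphi,e_\chi\rangle_{L_2}$, which depends continuously on $\varphi$ by Step~2.
\item \emph{Extension to $\Spc S$.} Fix a biorthogonal $\V\phi$ from Lemma~\ref{Lem:Biorthogonal} and lift $g_\varphi$ by $\Op P_{\V\phi}^\ast$. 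Setting $\Op A\eqdef\Op P_{\V\phi}^\ast\,\Op A_0\,\Op P_{\V\phi}$ then gives a continuous operator $\Spc S\to\Spc S'$ with $\pi\Op A\varphi=g_\varphi$ on $\Spc S_{\V p}$. Here $\Op A_0$ denotes the lifted map $\varphi\mapsto\Op P_{\V\phi}^\ast g_\varphi$ on $\Spc S_{\V p}$.
\item \emph{Clause~(ii).} The isometry is $|\varphi|_{\Op A}^2=\langle\Op A\varphi,\varphi\rangle=\|e_\varphi\|^2=\|g_\varphi\|^2$. Ontoness follows from Item~3 (density of $\Spc E$) together with the surjectivity of $\widetilde{\Op L}$. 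The inverse property follows from $\Op B g_\varphi=\Op L^\ast\widetilde{\Op L}g_\varphi=\Op L^\ast e_\varphi=\varphi$, extended by continuity.
\end{enumerate}

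The step I expect to be the main obstacle is Steps~3--4: turning the abstract classes $g_\varphi$ into a \emph{continuous} operator $\Op A:\Spc S\to\Spc S'$ whose quotient image is $g_\varphi$. This needs the barrelledness of $\Spc S_{\V p}$ and a careful use of the realization of $\dot{\Spc H}_{\Op B}$ in $\Spc S'/\Spc N_{\V p}$ from Proposition~\ref{Prop:Realization}, which in turn relies on $\Spc N_{\Op B}=\Spc N_{\V p}$ from Step~1. The other steps are Hilbert-space bookkeeping along the lines of Lemma~\ref{Lem:UnconditionedCriterion}.
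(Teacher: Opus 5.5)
Your proof is correct, but it follows a genuinely different route from the paper's.

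\textbf{The paper's route.} The paper never constructs $\Op A$ on $\Spc S_{\V p}$ by hand. It augments $\Op L$ to $\Psi\varphi=(\Op L\varphi,\V\phi(\varphi))\in L_2\oplus\R^{N_0}$, so that $\Psi^\ast\Psi=\Op B_+$ carries no conditioning. It then shows that Conditions 1--3 are equivalent to hypotheses (a)--(b) of Lemma~\ref{Lem:UnconditionedCriterion} for $\Psi$. Condition 1 is what places $\{\V 0\}\oplus\R^{N_0}$ in $\overline{\Psi(\Spc S)}$, so that sharpness for $\Psi$ reduces to the density of $\Spc E$ in $\Spc R$. The lemma then yields the compatible Riesz pair $(\Op A_+,\Op B_+)$, and the paper descends to $(\Op A_{\V\phi},\Op B)$ by splitting off the exchanged blocks $\Op A_+\phi_n=p_n$. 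This route inherits every completion and realization argument from Theorem~\ref{Theo:HilbertPositiveDef}. It gets the continuity of $\Op A$ from Banach--Steinhaus on $\Spc S$ itself, and it delivers the augmented Hilbert--Gelfand pair $(\Spc H,\Spc H')$ as a by-product.

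\textbf{What your route buys.} Your direct route is self-contained and shows which hypothesis does what. Items 2--3 alone build the isometry modulo $\Spc N_{\V p}$ and its inverse $\Op L^\ast\widetilde{\Op L}$. Item 1 (with Item 2) only identifies that quotient with the one by $\Spc N_{\Op B}$. Your Step 1 proves both inclusions of $\Spc N_{\Op B}=\Spc N_{\V p}$, whereas the paper's descent only records the attainability of the $p_n$ and leaves the reverse inclusion implicit. Your necessity argument through Proposition~\ref{Prop:Realization} is also more direct than the paper's, where necessity is only implicit (via Theorem~\ref{Thm:Augmentation} and the ``only if'' part of the lemma). Your Steps 3--4 reproduce exactly the paper's $\Op A_{\V\phi}$, with $\langle\Op A\varphi,\chi\rangle=\langle e_{\Op P_{\V\phi}\varphi},e_{\Op P_{\V\phi}\chi}\rangle_{L_2}$.

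\textbf{The price.} You must redo the bookkeeping yourself. In the sufficiency direction you cannot cite Proposition~\ref{Prop:Realization}, as your closing paragraph plans, because it presupposes a conditional Riesz pair. You do not need it, though:
\begin{itemize}
\item the realization $g\mapsto\big(\chi\mapsto\langle\widetilde{\Op L}g,e_\chi\rangle_{L_2}\big)$ of $\dot{\Spc H}_{\Op B}$ in $(\Spc S_{\V p})'$ is continuous by your Step 2 and injective by Item 3;
\item the realization of $\Spc H_{\Op A}$ in $\Spc S'$ is injective because $\langle h,\psi\rangle=\langle\Op T h,\Op L\psi\rangle_{L_2}$ with $\Op T h\in\Spc R$, where $\Op T$ is the isometric extension of $\varphi\mapsto e_\varphi$.
\end{itemize}
Finally, $\Spc S_{\V p}$ is indeed barrelled, since it is complemented by $\Op P_{\V\phi}$ and hence isomorphic to the quotient $\Spc S/\Spc N_{\V\phi}$. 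You can also bypass this entirely by applying Banach--Steinhaus on $\Spc S$ to the functionals $\varphi\mapsto\langle\psi,\Op P_{\V\phi}\varphi\rangle/\|\Op L\psi\|_{L_2}$.
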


\begin{proof}
Let us fix a biorthogonal family $\V \phi\subset\Spc S$ with projectors $\Op P_{\V \phi}$ and
$\Op P_{\V \phi}^\ast$ (Lemma~\ref{Lem:Biorthogonal}). The strategy is to remove the
conditioning by augmentation, so as to reduce the statement to
Lemma~\ref{Lem:UnconditionedCriterion}.

\medskip
\emph{Augmented factorization}:
We set $\Spc K_+\eqdef L_2\oplus\R^{N_0}$ and
\begin{align*}
\Psi:\Spc S\to\Spc K_+,
\qquad
\Psi\varphi\eqdef\big(\Op L\varphi,\V \phi(\varphi)\big),
\end{align*}
which is continuous, and injective because $\Op L$ is. Its adjoint is
$\Psi^\ast(w,\M a)=\Op L^\ast w+\iota\big(\V \phi^\Tr\M a\big)$, so that
\begin{align*}
\Psi^\ast\Psi=\Op L^\ast\Op L+\Op R_{\V \phi}=\Op B_+,
\qquad
|\varphi|_{\Op B_+}=\|\Psi\varphi\|_{\Spc K_+} ,
\end{align*}
in the notation of Theorem~\ref{Thm:Augmentation}. The augmented normal operator is therefore
itself a normal operator, for an operator that carries no conditioning. We write
$\Spc R_\Psi=\overline{\Psi(\Spc S)}^{\,\Spc K_+}$.

\medskip
\emph{Conditions 1--3 are the hypotheses of Lemma~\ref{Lem:UnconditionedCriterion} for $\Psi$}:
Let us first observe that $\Psi^\ast(\V 0,\V e_n)=\iota\phi_n$, which represents $\phi_n$ at no
cost, and that $\Psi^\ast(e_\varphi,\V 0)=\Op L^\ast e_\varphi=\iota\varphi$ for
$\varphi\in\Spc S_{\V p}$ under Condition 2. Since
$\Spc S=\Spc S_{\V p}\oplus\Spc N_{\V \phi}$, Condition 2 is therefore equivalent to
Condition (a) for $\Psi$, with
\begin{align*}
e^+_\varphi=\big(e_{\Op P_{\V \phi}\varphi},\ \V p(\varphi)\big),
\qquad
\Spc E_+=\Spc E\oplus\R^{N_0} .
\end{align*}
Next, the biorthogonality gives $\V \phi(p_n)=\V e_n$, so that the sequences of Condition 1
satisfy $\Psi\chi^{(n)}_k\to(\V 0,\V e_n)$, which places $\{\V 0\}\oplus\R^{N_0}$ in
$\Spc R_\Psi$ and yields $\Spc R_\Psi=\Spc R\oplus\R^{N_0}$. Conversely, let us assume
Condition 2 together with $(\V 0,\V e_n)\in\Spc R_\Psi$, and let $(\chi_k)\subset\Spc S$ be such
that $\|\Op L\chi_k\|_{L_2}\to0$ and $\V \phi(\chi_k)\to\V e_n$. The decomposition
$\psi=\Op P_{\V \phi}\psi+\V \phi^\Tr\V p(\psi)$ and the reproducing identity
\eqref{Eq:Reproducing} give
\begin{align*}
\langle\chi_k,\psi\rangle
=\langle e_{\Op P_{\V \phi}\psi},\Op L\chi_k\rangle_{L_2}
+\V p(\psi)^\Tr\V \phi(\chi_k)
\longrightarrow
\langle p_n,\psi\rangle,
\qquad\psi\in\Spc S,
\end{align*}
so that $\chi_k\to p_n$ in $\Spc S'$. Condition 1 is thus equivalent to the splitting
$\Spc R_\Psi=\Spc R\oplus\R^{N_0}$: it states that the finite-dimensional block is genuinely
present in the concrete dual. Given this splitting, the density of $\Spc E_+$ in $\Spc R_\Psi$
reduces to the density of $\Spc E$ in $\Spc R$, so that Condition 3 is equivalent to
Condition (b) for $\Psi$.

\medskip
\emph{Augmented pair}:
Lemma~\ref{Lem:UnconditionedCriterion}, applied to $\Op T=\Psi$, provides a continuous
positive-definite operator $\Op A_+:\Spc S\to\Spc S'$ such that $(\Op A_+,\Op B_+)$ is a
compatible Riesz pair, with
\begin{align*}
\langle\Op A_+\varphi,\psi\rangle
=\langle e^+_\varphi,e^+_\psi\rangle_{\Spc K_+}
=\langle e_{\Op P_{\V \phi}\varphi},e_{\Op P_{\V \phi}\psi}\rangle_{L_2}
+\V p(\varphi)^\Tr\V p(\psi).
\end{align*}
This identifies $\Op A_+=\Op A_{\V \phi}+\Op R_{\V p}$ in the notation of
Theorem~\ref{Thm:Augmentation}, where the first term is the conditioned Gram operator
$\Op A_{\V \phi}$ of Theorem~\ref{Thm:Concretization}. By
Theorem~\ref{Theo:HilbertPositiveDef}, the completions
$\Spc H=\overline{(\Spc S,|\cdot|_{\Op A_+})}$ and
$\Spc H'=\overline{(\Spc S,|\cdot|_{\Op B_+})}$ are mutually dual Hilbert--Gelfand spaces whose
Riesz maps are the extensions of $\Op A_+$ and $\Op B_+$. All the completion arguments are
inherited from that theorem, and none of them needs to be repeated here.

\medskip
\emph{Descent to $(\Op A_{\V \phi},\Op B)$}:
The block form of the representers gives $\Op A_+\phi_n=p_n$, so that the isometry
$\Op A_+:\Spc H\to\Spc H'$ maps $\Spc N_{\V \phi}$ onto $\Spc N_{\V p}$. It therefore maps the
orthogonal complements onto one another,
\begin{align*}
\Op A_+:\ \Spc H_{\Op A}=\Spc H\ominus\Spc N_{\V \phi}
\ \longrightarrow\
\Spc H'_{\Op A}=\Spc H'\ominus\Spc N_{\V p},
\end{align*}
isometrically and onto. The elements of $\Spc H'_{\Op A}$ annihilate $\V \phi$, so that the norm
of $\Spc H'$ reduces there to $|\cdot|_{\Op B}$, which identifies $\Spc H'_{\Op A}$ with
$\dot{\Spc H}_{\Op B}$ through $\Op P_{\V \phi}^\ast$ (Theorem~\ref{Thm:Concretization},
Item 1). The restriction of $\Op A_+$ to the primary blocks is thus the isometry
$\pi\Op A:\Spc H_{\Op A}\to\dot{\Spc H}_{\Op B}$ of Definition~\ref{Def:Intrinsic}(ii), with
inverse the extension of $\Op B$, while Clause~(i) of that definition is the equivalence
established in the second step. Hence $(\Op A_{\V \phi},\Op B)$ is a $\V p$-conditional Riesz
pair, and Theorem~\ref{Thm:AttainableNull} adds that the conditioning is minimal. The
identifications of the statement are those of Lemma~\ref{Lem:UnconditionedCriterion}, read
through this splitting: $C_\varphi=\|e_\varphi\|_{L_2}=|\varphi|_{\Op A}$, the map
$\widetilde{\Op L}$ is the isometry $\dot{\Spc H}_{\Op B}\to\Spc R$, and
$\pi\Op A\varphi=\widetilde{\Op L}^{-1}e_\varphi$.
\end{proof}

\begin{remark}
\label{Rem:NaiveAnnihilation}
Condition 1 is the attainable form of the requirement $\Op L(\Spc N_{\V p})=\{0\}$. The naive
form of this requirement---that the distributional (Fourier-multiplier) extension of $\Op L$
to $\Spc S'$ annihilates $\Spc N_{\V p}$---is necessary but not sufficient (see
Example~\ref{Ex:FracLaplacian}). For $N_0=0$, the theorem reduces to the characterization of
compatible Riesz pairs generated through $L_2$.
\end{remark}

\begin{remark}
\label{Rem:SharpnessAsDuality}
Since $\ker(\Op L^\ast)=\Spc R^{\perp}$, Condition 2 is the range inclusion
$\iota(\Spc S_{\V p})\subseteq\Op L^\ast(\Spc R)$, with the representer recovered as
$e_\varphi=\big(\Op L^\ast|_{\Spc R}\big)^{-1}\iota\varphi$; this is the stable inverse
${\Op L^\ast}^{-1}$ that appears in Theorem~\ref{Thm:PracticalChecklist}. Condition 3 is the
sharpness of the resulting bound, and it is equivalent to the density of $\Spc E$ in
$\Spc R$. In practice, only the stated inequality of Condition 3 needs to be verified, because
the reproducing identity \eqref{Eq:Reproducing} yields the reverse one by the
Cauchy--Schwarz inequality. The duality that underlies the two conditions is discussed after
Theorem~\ref{Thm:PracticalChecklist}.
\end{remark}

\begin{corollary}[$L_2$-realization of the conditional pair]
\label{Cor:L2RealizationConditional}
Let the conditions of Theorem~\ref{Thm:NormalPair} hold. Then, the extension
$\Op T:\Spc H_{\Op A}\to\Spc R$ of $\varphi\mapsto e_\varphi$ and the extension
$\widetilde{\Op L}:\dot{\Spc H}_{\Op B}\to\Spc R$ of $\psi\mapsto\Op L\psi$ are isometric isomorphisms
onto $\Spc R=\overline{\Op L(\Spc S)}^{\,L_2}$. In particular, the following statements are
equivalent:
\begin{enumerate}
\item $\ker(\Op L^\ast)=\{0\}$;
\item $\Op L(\Spc S)$ is dense in $L_2$;
\item $\Op T:\Spc H_{\Op A}\to L_2$ is an isometric isomorphism;
\item $\widetilde{\Op L}:\dot{\Spc H}_{\Op B}\to L_2$ is an isometric isomorphism.
\end{enumerate}
In this case, the inverse of $\Op T$ is the adjoint $\Op L^\ast:L_2\to\Spc S'$, which maps
$L_2$ isometrically onto the realized space $\Spc H_{\Op A}=\Op L^\ast(L_2)$, with
$\|\Op L^\ast w\|_{\Spc H_{\Op A}}=\|w\|_{L_2}$ for every $w\in L_2$. The same criterion governs
the concrete realization
$\Spc H'_{\Op A}=\Op P_{\V \phi}^\ast\big(\dot{\Spc H}_{\Op B}\big)$ of
Theorem~\ref{Thm:Concretization}: since $\Op P_{\V \phi}^\ast$ is an isometric identification of
$\dot{\Spc H}_{\Op B}$ (Theorem~\ref{Thm:Concretization}, Item 1), the composition
$\Op P_{\V \phi}^\ast\widetilde{\Op L}^{-1}:L_2\to\Spc H'_{\Op A}$ is an isometric isomorphism if and only if
Items 1--4 hold. Correspondingly, we also have $L_2=\widetilde{\Op L}(\dot{\Spc H}_{\Op B})$
with $\|\Op P_{\V \phi}^\ast f\|_{\Spc H'_{\Op A}}=\|f\|_{\dot{\Spc H}_{\Op B}}=\|\widetilde{\Op L} f\|_{L_2}$ for every $f \in \dot{\Spc H}_{\Op B}$.
\end{corollary}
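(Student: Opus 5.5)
The plan is to derive everything from the conclusions of Theorem~\ref{Thm:NormalPair}, and mostly from the identity $\widetilde{\Op L}(\dot{\Spc H}_{\Op B})=\Spc R$ and the representer formula $\pi\Op A\varphi=\widetilde{\Op L}^{-1}e_\varphi$.

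\emph{First assertion.} The map $\widetilde{\Op L}$ is already an isometric isomorphism onto $\Spc R$ by Theorem~\ref{Thm:NormalPair}. For $\Op T$, the statement gives $\|e_\varphi\|_{L_2}=|\varphi|_{\Op A}$ on $\Spc S_{\V p}$, so $\varphi\mapsto e_\varphi$ extends isometrically to $\Spc H_{\Op A}$. Since the identity $e_\varphi=\widetilde{\Op L}\circ\pi\Op A\,\varphi$ holds on a dense subspace, the extension is $\Op T=\widetilde{\Op L}\circ\pi\Op A$. This is a composition of two isometric isomorphisms, $\Spc H_{\Op A}\to\dot{\Spc H}_{\Op B}$ from Definition~\ref{Def:Intrinsic}(ii) and $\dot{\Spc H}_{\Op B}\to\Spc R$, so it is onto $\Spc R$. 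Equivalently, the range is the closure of $\Spc E$, which equals $\Spc R$ by Condition~3.

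\emph{Equivalences.} (1)$\Leftrightarrow$(2) follows from $\ker(\Op L^\ast)=\Spc R^\perp$: the range $\Op L(\Spc S)$ is dense exactly when $\Spc R=L_2$, that is, when $\Spc R^\perp=\{0\}$. Items~3 and~4 each say that the range $\Spc R$ of an already isometric map equals $L_2$, so both are equivalent to (2). Here I read ``isometric isomorphism $\to L_2$'' as meaning onto.

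\emph{Inverse of $\Op T$.} For $\varphi\in\Spc S_{\V p}$ we have $\Op L^\ast e_\varphi=\iota\varphi$, so $\Op L^\ast\Op T=\Identity$ on $\Spc S_{\V p}$. To extend this to $\Spc H_{\Op A}$, we need $\Op L^\ast:L_2\to\Spc S'$ to be continuous; it is, for the weak-$\ast$ topology. We also need the realization map $\Spc H_{\Op A}\to\Spc S'$ of Proposition~\ref{Prop:Realization} to be continuous, which it is. Hence $\Op L^\ast\Op T h=h$ in $\Spc S'$ for all $h\in\Spc H_{\Op A}$. When $\Spc R=L_2$, $\Op T$ is bijective onto $L_2$, so $\Op L^\ast=\Op T^{-1}$. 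This gives $\Op L^\ast(L_2)=\Spc H_{\Op A}$ and $\|\Op L^\ast w\|_{\Spc H_{\Op A}}=\|\Op T^{-1}w\|=\|w\|_{L_2}$.

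The step I expect to need the most care is this density extension. One must check that the limit in $\Spc S'$ of $\Op L^\ast e_{\varphi_j}=\varphi_j$ is indeed the realized element $h$, which is exactly the definition in Proposition~\ref{Prop:Realization}, Item~2. One must also check that $\Op L^\ast e_{\varphi_j}\to\Op L^\ast(\Op T h)$, which holds because $\langle\Op L^\ast w,\psi\rangle=\langle w,\Op L\psi\rangle_{L_2}$ is continuous in $w$.

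\emph{Concrete realization.} By Theorem~\ref{Thm:Concretization}, Item~1, $\Op P_{\V \phi}^\ast:\dot{\Spc H}_{\Op B}\to\Spc H'_{\Op A}$ is an isometric bijection. The composition $\Op P_{\V \phi}^\ast\widetilde{\Op L}^{-1}$ is defined on $\Spc R$, isometric, and onto $\Spc H'_{\Op A}$. It is an isometric isomorphism from $L_2$ precisely when $\Spc R=L_2$, that is, under Items~1--4. The norm chain $\|\Op P_{\V \phi}^\ast f\|_{\Spc H'_{\Op A}}=\|f\|_{\dot{\Spc H}_{\Op B}}=\|\widetilde{\Op L}f\|_{L_2}$ is simply the concatenation of these two isometries.
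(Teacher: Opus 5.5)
Your proposal is correct and follows essentially the same route as the paper: both read off the isometries onto $\Spc R$ from Theorem~\ref{Thm:NormalPair}, reduce Items 1--4 to $\Spc R=L_2$ via $\ker(\Op L^\ast)=\Spc R^\perp$, and compose with the isometry $\Op P_{\V \phi}^\ast$ for the concrete realization. Your identification $\Op T=\widetilde{\Op L}\circ\pi\Op A$ and your explicit density argument for $\Op L^\ast\Op T=\Identity$ on $\Spc H_{\Op A}$ spell out what the paper states more briefly as ``the restriction of $\Op A_+$ to the primary blocks'' and ``restates the realization $\Spc H_{\Op A}\cong\Op L^\ast(\Spc R)$.''
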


\begin{proof}
By the last step of the proof of Theorem~\ref{Thm:NormalPair}, $\Op T$ is the restriction of
$\Op A_+$ to the primary blocks, which is an isometry onto $\Spc R$, so that
$\Op T(\Spc H_{\Op A})=\Spc R$; likewise, $\widetilde{\Op L}$ is the isometric extension of
$\psi\mapsto\Op L\psi$ from the dense subspace $\Spc S$ of $\dot{\Spc H}_{\Op B}$, with range
$\Spc R$. Since $\ker(\Op L^\ast)=\Spc R^{\perp}$, each of Items 1--4 is
equivalent to $\Spc R=L_2$. The assertion on $\Op T^{-1}$ restates the realization
$\Spc H_{\Op A}\cong\Op L^\ast(\Spc R)$ of Theorem~\ref{Thm:NormalPair} with $\Spc R=L_2$,
where the injectivity of $\Op L^\ast$ guarantees that the transported norm is well-defined.
The statement on $\Spc H'_{\Op A}$ follows by composing the two isometries.
\end{proof}

We note that the conditioning is invisible at the intermediate level of $\Spc R=L_2$: the family $\V p$ determines
which distributions realize $\Spc H_{\Op A}$ (an annihilator) and $\dot{\Spc H}_{\Op B}$ (a
quotient, or its concrete representative $\Spc H'_{\Op A}$) inside $\Spc S'$, but not their
common $L_2$-model $\Spc R$. In the fractional-Laplacian setting of
Example~\ref{Ex:FracLaplacian}, the symbol $\|\V\omega\|^{\gamma}$ vanishes only on a null
set, so that $\ker(\Op L^\ast)=\{0\}$ and all of the above spaces are canonically
isometrically isomorphic to $L_2$.
\section{Concrete Operator-Based Constructions}
\label{Sec:OperatorConstructions}
\subsection{A Practical Checklist}
\begin{theorem}[Characterization of $L_2$-realizable conditional pairs]
\label{Thm:PracticalChecklist}
Let $\Op L:\Spc S\to L_2$ be a continuous linear operator, and let
\begin{align*}
Q(\varphi)
\eqdef
\sup_{\psi\in\Spc S\backslash\{0\}}
\frac{|\langle\varphi,\psi\rangle|^2}{\|\Op L\psi\|_{L_2}^2}
\in[0,+\infty],
\qquad
\varphi\in\Spc S,
\end{align*}
denote the dual energy. The following four conditions are each necessary and, together,
sufficient for the existence of a continuous operator $\Op A:\Spc S\to\Spc S'$ such that
$(\Op A,\Op L^\ast\Op L)$ is a $\V p$-conditional Riesz pair for $(\Spc S,\Spc S')$ whose
canonical maps $\Op T:\Spc H_{\Op A}\to L_2$ and $\widetilde{\Op L}:\dot{\Spc H}_{\Op B}\to L_2$ are
isometric isomorphisms.
\begin{enumerate}
\item \emph{(Injectivity)} $\Op L$ is injective on $\Spc S$.
\item \emph{(Dense range)} $\Op L(\Spc S)$ is dense in $L_2$; equivalently,
$\ker(\Op L^\ast)=\{0\}$.
\item \emph{(Strict $\V p$-conditional positivity)}
\begin{align*}
Q(\varphi)<\infty
\quad\Longleftrightarrow\quad
\varphi\in\Spc S_{\V p}.
\end{align*}
\item \emph{(Sharpness)}
\begin{align*}
\|\Op L\psi\|_{L_2}
\le\sup_{\varphi\in\Spc S_{\V p}\backslash\{0\}}
\frac{|\langle\psi,\varphi\rangle|}{\sqrt{Q(\varphi)}},
\qquad
\psi\in\Spc S;
\end{align*}
equivalently, the representers
$e_\varphi\eqdef{\Op L^\ast}^{-1}\iota\varphi$, which are well-defined for
$\varphi\in\Spc S_{\V p}$ by Conditions 2--3 and satisfy
$\|e_\varphi\|_{L_2}^2=Q(\varphi)$, span a dense subspace of $L_2$.
\end{enumerate}
When Conditions 1--4 hold, the following identifications are valid:
$\Spc N_{\Op B}=\Spc N_{\V p}$; the quadratic form of every admissible $\Op A$ is $Q$ on
$\Spc S_{\V p}$, and the conditioned operator that is built from a Green-function inverse of
$\Op L^\ast$ is admissible (Corollary~\ref{Cor:GreenFactor}); the stable inverse
${\Op L^\ast}^{-1}$ maps the realized space $\Spc H_{\Op A}=\Op L^\ast(L_2)$ isometrically
onto $L_2$ and retrieves the representers, $e_\varphi={\Op L^\ast}^{-1}\iota\varphi$; and the
concrete realization $\Spc H'_{\Op A}=\Op P_{\V \phi}^\ast(\dot{\Spc H}_{\Op B})$ is isometrically
isomorphic to $L_2$ through $\Op P_{\V \phi}^\ast\widetilde{\Op L}^{-1}$.
\end{theorem}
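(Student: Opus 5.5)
The plan is to reduce the statement to Theorem~\ref{Thm:NormalPair} combined with Corollary~\ref{Cor:L2RealizationConditional}. The work consists in checking that Conditions 1--4 here match, one by one, the hypotheses of those results together with the requirement $\Spc R=L_2$.

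\emph{Necessity.} Suppose that a $\V p$-conditional Riesz pair $(\Op A,\Op L^\ast\Op L)$ exists and that $\Op T$ and $\widetilde{\Op L}$ are isometric isomorphisms onto $L_2$.
\begin{enumerate}
\item Definition~\ref{Def:Intrinsic} requires $\Op B=\Op L^\ast\Op L$ to be positive-definite. Hence $\|\Op L\varphi\|_{L_2}>0$ for $\varphi\neq0$, which is Condition 1.
\item Corollary~\ref{Cor:L2RealizationConditional} shows that the ranges are $\Spc R$, so $\Spc R=L_2$, which is Condition 2.
\item Theorem~\ref{Thm:AttainableNull}(iii) and the Bridge in its proof identify $\{Q<\infty\}$ with $\Spc N_{\Op B}^\perp=\Spc S_{\V p}$, which is Condition 3. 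Note that $Q(\varphi)=(\|\varphi\|^\ast_{\Op A})^2$.
\item Condition 3 of Theorem~\ref{Thm:NormalPair} holds, with $\|e_\varphi\|_{L_2}^2=Q(\varphi)$ from Lemma~\ref{Lem:UnconditionedCriterion}. Together with $\Spc R=L_2$, this gives Condition 4.
\end{enumerate}

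\emph{Sufficiency.} Assume Conditions 1--4; we verify the three hypotheses of Theorem~\ref{Thm:NormalPair}. Condition 1 makes $\Op B$ positive-definite. Condition 2 of Theorem~\ref{Thm:NormalPair} then follows from Condition 3 through the pointwise criterion \eqref{Eq:RepresenterCriterion} applied with $\Op T=\Op L$ and $\Spc R=L_2$. That criterion gives a unique $e_\varphi\in L_2$ with $\Op L^\ast e_\varphi=\iota\varphi$ and $\|e_\varphi\|^2=Q(\varphi)$, so Condition 4 is literally Condition 3 of Theorem~\ref{Thm:NormalPair}.

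The main obstacle is Condition 1 of Theorem~\ref{Thm:NormalPair}, namely the attainable annihilation of each $p_n$. It must be derived from the "only if" half of Condition 3, that $Q(\varphi)=\infty$ off $\Spc S_{\V p}$. I would argue through the bipolar theorem, following the step (iii)$\Rightarrow$(i) of Theorem~\ref{Thm:AttainableNull}.
\begin{enumerate}
\item Show directly that $\{Q<\infty\}=\Spc N_{\Op B}^\perp$. The inclusion $\subseteq$ comes from the divergence argument of the Bridge. For $\supseteq$, take $\varphi\in\Spc N_{\Op B}^\perp$ with $Q(\varphi)=\infty$. Then there are $\psi_k$ with $\|\Op L\psi_k\|\to0$ and $\langle\varphi,\psi_k\rangle=1$.
\item Extract a subsequence converging in $\Spc S'$ to a nonzero element of $\Spc N_{\Op B}$. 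This produces a contradiction, since $\varphi$ annihilates $\Spc N_{\Op B}$ but pairs to $1$ with the limit.
\item Conclude with Condition 3 that $\Spc N_{\V p}^\perp=\Spc N_{\Op B}^\perp$. The bipolar theorem then gives $\Spc N_{\Op B}\subseteq\Spc N_{\V p}$ with equal weak-$\ast$ closures, so $\Spc N_{\Op B}$ is dense in $\Spc N_{\V p}$.
\item Since $\Spc N_{\V p}$ is finite-dimensional, density yields $\Spc N_{\Op B}=\Spc N_{\V p}$, which is exactly Condition 1.
\end{enumerate}
The delicate point is the compactness extraction in step 2. I would first try to use the representers $e_\phi$ for $\phi\in\Spc S_{\V p}$ to bound $\langle\phi,\psi_k\rangle$ uniformly. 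Writing $\varphi=\Op P_{\V\phi}\varphi+\V\phi^\Tr\V p(\varphi)$ reduces the unbounded part to finitely many coordinates $\V\phi(\psi_k)$. After normalisation these give a bounded sequence, and the normalised $\psi_k$ converge weak-$\ast$ to an element of $\mathrm{span}$ of the limits. If the needed uniform boundedness fails in this form, I would argue via Banach--Steinhaus on the barrelled space $\Spc S_{\V p}$ and the Montel property of $\Spc S$.

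Once Conditions 1--3 of Theorem~\ref{Thm:NormalPair} hold, that theorem provides the pair, and Corollary~\ref{Cor:L2RealizationConditional} with $\Spc R=L_2$ provides the isometries $\Op T$ and $\widetilde{\Op L}$ onto $L_2$. The remaining identifications are read off from the same two results:
\begin{enumerate}
\item $\Spc N_{\Op B}=\Spc N_{\V p}$;
\item the quadratic form of every admissible $\Op A$ on $\Spc S_{\V p}$ is $\|e_\varphi\|^2=Q$;
\item $\Spc H_{\Op A}=\Op L^\ast(L_2)$, with ${\Op L^\ast}^{-1}$ isometric;
\item $\Op P_{\V\phi}^\ast\widetilde{\Op L}^{-1}$ is an isometric isomorphism onto $\Spc H'_{\Op A}$.
\end{enumerate}
The Green-factor admissibility claim is deferred to Corollary~\ref{Cor:GreenFactor}, as the statement indicates.
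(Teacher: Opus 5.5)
\textbf{Gap in deriving Condition 1 of Theorem~\ref{Thm:NormalPair}.} Your necessity half, and the sufficiency steps that produce Conditions 2 and 3 of Theorem~\ref{Thm:NormalPair}, are fine and match the paper. The problem is exactly at the step you flagged: deriving the attainable annihilation of $\Spc N_{\V p}$ from the minimality half of Condition~3.

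Your decomposition $\langle\psi_k,\theta\rangle=\langle\psi_k,\Op P_{\V\phi}\theta\rangle+\V p(\theta)^\Tr\V\phi(\psi_k)$ correctly shows that only the coordinates $\V\phi(\psi_k)$ matter. If they are bounded, your contradiction goes through. They need not be bounded, however. As soon as $\Spc N_{\Op B}\neq\{0\}$, take any admissible sequence and add $\lambda_k\chi_{m(k)}$, where:
\begin{itemize}
\item $\chi_m\to q\in\Spc N_{\Op B}\setminus\{0\}$ with $\|\Op L\chi_m\|_{L_2}\to0$;
\item $\lambda_k\to\infty$ fast enough;
\item $m(k)$ is so large that $\lambda_k\|\Op L\chi_{m(k)}\|_{L_2}\to0$ and $\lambda_k\langle\varphi,\chi_{m(k)}\rangle\to0$.
\end{itemize}
The last requirement is possible because $\langle\varphi,q\rangle=0$. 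The new sequence still has vanishing energy and $\langle\varphi,\cdot\rangle\to1$. But $|\V\phi(\cdot)|\to\infty$, since $q\in\Spc N_{\V p}$ forces $\V\phi(q)\neq\V 0$. Hence no subsequence is bounded in $\Spc S'$, and neither Banach--Steinhaus nor the Montel property can extract a limit. Your normalisation does produce a nonzero weak-$\ast$ limit $\V p^\Tr\M c\in\Spc N_{\Op B}$. Its pairing with $\varphi$, however, is $\lim|\V\phi(\psi_k)|^{-1}=0$. So there is no contradiction: you have only recovered an element of $\Spc N_{\Op B}$ that $\varphi$ already annihilates.

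\textbf{How to close it.} The missing ingredient is control of the $\V\phi$-coordinates, and there are two ways to supply it.

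\emph{Correcting your argument.} Let $W=\V\phi(\Spc N_{\Op B})\subseteq\R^{N_0}$. Subtract from $\psi_k$ the component of $\V\phi(\psi_k)$ along $W$, realized by approximants (with large index) of a basis of $\Spc N_{\Op B}$. Then $\V\phi(\psi_k)$ stays within $1/k$ of $W^\perp$, while the energy and the pairing with $\varphi$ are unaffected in the limit; the pairing is unaffected because $\varphi\in\Spc N_{\Op B}^\perp$. A normalised limit would then lie in $W\cap W^\perp=\{0\}$, which forces boundedness and restores your contradiction.

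\emph{The paper's route (Step~1), which avoids compactness altogether.} Build the coordinates into a Hilbert norm via $\Psi\varphi=(\Op L\varphi,\V\phi(\varphi))\in L_2\oplus\R^{N_0}$.
\begin{itemize}
\item Any $(w,\M a)\perp\Psi(\Spc S)$ satisfies $\Op L^\ast w=-\iota(\V\phi^\Tr\M a)$. Hence $Q(\V\phi^\Tr\M a)\le\|w\|_{L_2}^2<\infty$.
\item Condition~3 then gives $\V\phi^\Tr\M a\in\Spc S_{\V p}\cap\Spc N_{\V\phi}=\{0\}$, so $\M a=\V 0$.
\item Therefore $(\V 0,\V e_n)\in\overline{\Psi(\Spc S)}$. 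This yields $\chi_k$ with $\|\Op L\chi_k\|_{L_2}\to0$ and $\V\phi(\chi_k)\to\V e_n$.
\item Your own decomposition then gives $\chi_k\to p_n$ in $\Spc S'$ directly, with no bipolar step needed.
\end{itemize}
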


\begin{proof}
Throughout the proof, we set $\Spc R=\overline{\Op L(\Spc S)}^{\,L_2}$ and
$\Spc D\eqdef\{\varphi\in\Spc S:\ Q(\varphi)<\infty\}$. When $\Op L$ is injective, the
\emph{Representers} step of the proof of Lemma~\ref{Lem:UnconditionedCriterion}, applied to
$\Op T=\Op L$, states that $Q(\varphi)<\infty$ if and only if there exists a unique $e_\varphi\in\Spc R$ with
$\Op L^\ast e_\varphi=\iota\varphi$, in which case $Q(\varphi)=\|e_\varphi\|_{L_2}^2$. Hence $\Spc D$ is the subspace of test functions that
are representable through $\Op L^\ast$, and Condition 3 splits into the representation
property $\Spc S_{\V p}\subseteq\Spc D$ and the minimality property
$\Spc D\subseteq\Spc S_{\V p}$.

\medskip
\emph{Step 1 (Minimality is equivalent to attainability)}:
We claim that, if $\Op L$ is injective and $\Spc S_{\V p}\subseteq\Spc D$, then
\begin{align*}
\Spc N_{\V p}\subseteq\Spc N_{\Op B}
\qquad\Longleftrightarrow\qquad
\Spc D=\Spc S_{\V p}.
\end{align*}
Let us first assume that $\Spc D=\Spc S_{\V p}$, and let $\Psi$ be the map of the proof of
Theorem~\ref{Thm:NormalPair}. A pair $(w,\M a)\in L_2\oplus\R^{N_0}$ annihilates the range
of $\Psi$ if and only if
$\langle w,\Op L\varphi\rangle_{L_2}+\M a^\Tr\V \phi(\varphi)=0$ for every
$\varphi\in\Spc S$; that is, $\Op L^\ast w=-\iota\big(\V \phi^\Tr\M a\big)$. This
forces $\V \phi^\Tr\M a\in\Spc D=\Spc S_{\V p}$, and the decomposition
$\Spc S=\Spc S_{\V p}\oplus\Spc N_{\V \phi}$ then gives $\M a=\V 0$. The annihilator of
$\Psi(\Spc S)$ is therefore contained in $L_2\oplus\{\V 0\}$, so that
$\Spc R_\Psi\supseteq\{\V 0\}\oplus\R^{N_0}$. By the second step of the proof of
Theorem~\ref{Thm:NormalPair}, that splitting is equivalent to Condition 1 of that theorem,
which gives $p_n\in\Spc N_{\Op B}$ for every $n$.

Conversely, let us assume that $\Spc N_{\V p}\subseteq\Spc N_{\Op B}$ and let
$\varphi\in\Spc D$. For each $n$, we pick $(\chi_k)\subset\Spc S$ with
$\|\Op L\chi_k\|_{L_2}\to0$ and $\chi_k\to p_n$ in $\Spc S'$; then
\begin{align*}
\langle p_n,\varphi\rangle
=\lim_k\langle\chi_k,\varphi\rangle
=\lim_k\langle\Op L\chi_k,e_\varphi\rangle_{L_2}
=0,
\end{align*}
so that $\varphi\in\Spc S_{\V p}$. Hence $\Spc D\subseteq\Spc S_{\V p}$, and equality follows
from the standing inclusion. This proves the claim.

\medskip
\emph{Step 2 (Sufficiency)}:
Let us assume Conditions 1--4. Condition 1 makes
$|\cdot|_{\Op B}=\|\Op L\cdot\|_{L_2}$ a norm, and Condition 2 gives $\Spc R=L_2$. Condition 3
provides the representation property on $\Spc S_{\V p}$ and, through Step 1, the attainable
annihilation of $\Spc N_{\V p}$; Condition 4 is the sharpness requirement. Hence the three
conditions of Theorem~\ref{Thm:NormalPair} are met, which yields an admissible $\Op A$, and
Corollary~\ref{Cor:L2RealizationConditional} upgrades the canonical isometries onto
$\Spc R=L_2$.

\medskip
\emph{Step 3 (Necessity)}:
Condition 1 is required for $\Op B=\Op L^\ast\Op L$ to be positive-definite, and Condition 2
is Item 2 of Corollary~\ref{Cor:L2RealizationConditional}. By
Theorem~\ref{Thm:NormalPair}, the representation property
($\Spc S_{\V p}\subseteq\Spc D$), the attainable annihilation, and the sharpness are all
necessary; Step 1 converts the attainable annihilation into
$\Spc D\subseteq\Spc S_{\V p}$, so that Condition 3 is necessary, and the sharpness is
Condition 4.

\medskip
\emph{Step 4 (Identifications)}:
These restate the identifications of Theorem~\ref{Thm:NormalPair} and
Corollary~\ref{Cor:L2RealizationConditional}, combined with
$Q(\varphi)=\|e_\varphi\|_{L_2}^2$ on $\Spc S_{\V p}$.
\end{proof}

Conditions 3 and 4 are the two halves of a single duality, expressed through the representers
$e_\varphi={\Op L^\ast}^{-1}\iota\varphi$ and the reproducing identity
\eqref{Eq:Reproducing}. Condition 3 asserts the dual-norm identity
\begin{align}
\sqrt{Q(\varphi)}
=\|e_\varphi\|_{L_2}
=\sup_{\psi\in\Spc S\backslash\{0\}}
\frac{|\langle\psi,\varphi\rangle|}{\|\Op L\psi\|_{L_2}},\label{Eq:DualNormEq}
\end{align}
together with its exact finiteness domain---the operator-generated form of
Corollary~\ref{Cor:GreenNormal}(iii)---and it thereby calibrates $\Spc N_{\V p}$ to the energy.
If $\V p$ is \emph{too small}, then $Q$ blows up somewhere inside $\Spc S_{\V p}$ and
$\Spc H_{\Op A}$ cannot form; if $\V p$ is \emph{too large}, then $Q$ remains finite on some
$\varphi\notin\Spc S_{\V p}$, which contradicts the attainability of $\Spc N_{\V p}$, as
happens for the Laplacian in dimension $3$ with the affine conditioning
(Example~\ref{Ex:FracLaplacian}). Both defects are detectable by inspection of $Q$, which makes
Condition 3 preferable to the attainable annihilation of Theorem~\ref{Thm:NormalPair}.

Condition 4, together with the automatic reverse bound of
Remark~\ref{Rem:SharpnessAsDuality}, asserts the \emph{bidual} identity
\begin{align}
\|\Op L\psi\|_{L_2}
=\sup_{\varphi\in\Spc S_{\V p}\backslash\{0\}}
\frac{|\langle\psi,\varphi\rangle|}{\sqrt{Q(\varphi)}},\label{Eq:Bidual}
\end{align}
which states that dualizing twice returns the energy. It fails when some nonzero $w\in L_2$ is
orthogonal to every representer---a \emph{ghost}, which is a limit of data $\Op L\psi_k$ whose
sources $\psi_k$ vanish in $\Spc S'$, and which would cost the realization of
$\dot{\Spc H}_{\Op B}$ its injectivity. Only the two conditions together make
$\|\Op L\cdot\|_{L_2}$ and $\sqrt{Q}$ a perfect dual pair of norms. In the Fourier-multiplier
case treated below, $Q$ is an explicit integral, so that all these checks are computations.

An important simplification occurs
for the class of regular operators on $\Spc S$, which is closed under
adjunction and composition.
\begin{definition}[Regular operator]
\label{Def:Regular}
A continuous operator $\Op T:\Spc S'\to\Spc S'$ is said to be \emph{regular} on $\Spc S$ if it
maps the test functions among themselves; that is, if $\Op T|_{\Spc S}:\Spc S\to\Spc S$.
\end{definition}

\begin{corollary}[Sharpness for regular operators]
\label{Cor:AutomaticSharpness}
Let
$\Op L:\Spc S\to L_2$ satisfy Conditions 1--3 of Theorem~\ref{Thm:PracticalChecklist}. Then,
for every $\psi\in\Spc S$ whose image $\Op B\psi=\Op L^\ast\Op L\psi$ is a test function, one
has $\Op B\psi\in\Spc S_{\V p}$ and the supremum of Condition 4 is attained at
$\varphi=\Op B\psi$, with $e_\varphi=\Op L\psi$ and
\begin{align*}
\frac{|\langle\psi,\varphi\rangle|}{\sqrt{Q(\varphi)}}=\|\Op L\psi\|_{L_2} .
\end{align*}
Consequently, if $\Op L$ (or, more generally, $\Op B$) is regular, then Condition 4 holds automatically,
so that Conditions 1--3 alone characterize the admissibility of such operators.
\end{corollary}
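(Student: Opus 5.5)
The plan is to work directly with the representers and the reproducing identity \eqref{Eq:Reproducing}. Fix $\psi\in\Spc S$ with $\varphi\eqdef\Op B\psi\in\Spc S$, meaning $\iota\varphi=\Op L^\ast(\Op L\psi)$. The candidate representer is $w\eqdef\Op L\psi$, which lies in $\Op L(\Spc S)\subseteq\Spc R=L_2$ by Condition 2. Then $\Op L^\ast w=\iota\varphi$. Testing this against any $\chi\in\Spc S$ gives $|\langle\varphi,\chi\rangle|=|\langle\Op L\psi,\Op L\chi\rangle_{L_2}|\le\|\Op L\psi\|_{L_2}\|\Op L\chi\|_{L_2}$, so $Q(\varphi)\le\|\Op L\psi\|_{L_2}^2<\infty$. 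Condition 3 then yields $\varphi\in\Spc S_{\V p}$.

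Next, the uniqueness of the representer in $\Spc R$ (the \emph{Representers} step of Lemma~\ref{Lem:UnconditionedCriterion}, applied with $\Op T=\Op L$, which is injective by Condition 1) identifies $e_\varphi=\Op L\psi$. The same step gives $Q(\varphi)=\|e_\varphi\|_{L_2}^2=\|\Op L\psi\|_{L_2}^2$. Finally, $\langle\psi,\varphi\rangle=\langle\psi,\Op L^\ast\Op L\psi\rangle=\|\Op L\psi\|_{L_2}^2$. Provided $\psi\neq0$, so that $\Op L\psi\neq0$ by injectivity and hence $\varphi\neq0$ with $Q(\varphi)>0$, the ratio becomes $\|\Op L\psi\|_{L_2}^2/\|\Op L\psi\|_{L_2}=\|\Op L\psi\|_{L_2}$. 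Thus the supremum in Condition 4 is at least $\|\Op L\psi\|_{L_2}$, and by Remark~\ref{Rem:SharpnessAsDuality} it is also at most that value, so it is attained at $\varphi=\Op B\psi$. The case $\psi=0$ is trivial.

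For the consequence, I would argue as follows. If $\Op B$ is regular, meaning it maps $\Spc S$ into $\Spc S$, the preceding argument applies to every $\psi\in\Spc S$, so Condition 4 holds on all of $\Spc S$. If instead $\Op L$ is regular, I read this as $\Op L$ and $\Op L^\ast$ both preserving test functions, since the regular class is closed under adjunction; under the identification of $L_2$ functions with elements of $\Spc S'$, this gives $\Op B\psi=\Op L^\ast\Op L\psi\in\Spc S$ because the class is closed under composition. This reduces to the previous case. Theorem~\ref{Thm:PracticalChecklist} then shows that Conditions 1--3 alone suffice.

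The main delicate point is the bookkeeping that makes $\Op L\psi$ an admissible representer. It must lie in $\Spc R$, which holds trivially since it lies in $\Op L(\Spc S)$, and the interpretation of $\Op L^\ast$ on it must agree with the distributional action $\Op B\psi$. That agreement is simply the definition of $\Op B=\Op L^\ast\Op L$. I also need to check that the reading of ``$\Op L$ regular'' really yields $\Op B(\Spc S)\subseteq\Spc S$.
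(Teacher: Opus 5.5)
Your proposal is correct and follows essentially the paper's argument: you identify $e_\varphi=\Op L\psi$ by uniqueness of the representer, use Condition 3 to place $\Op B\psi$ in $\Spc S_{\V p}$, compute the ratio via the reproducing identity with the Cauchy--Schwarz upper bound, and reduce the regular-$\Op L$ case to the regular-$\Op B$ case through closure of regular operators under adjunction and composition (which the paper justifies by transposition and reflexivity). The only cosmetic difference is that you verify Condition 4 in its inequality form pointwise in $\psi$, whereas the paper phrases it as $\Spc E\supseteq\Op L(\Spc S)$ being dense in $L_2$; the two are equivalent.
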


\begin{proof}
\medskip
\emph{Conditioned range}:
Let $\psi\in\Spc S$ be such that $\varphi\eqdef\Op B\psi$ belongs to $\Spc S$. Since
$\Op L\psi\in L_2$ and $\Op L^\ast(\Op L\psi)=\iota\varphi$, the uniqueness of the
representer in \eqref{Eq:RepresenterCriterion} gives $e_\varphi=\Op L\psi$, so that
$Q(\varphi)=\|\Op L\psi\|^2_{L_2}<\infty$. Condition 3 then places $\varphi$ in
$\Spc S_{\V p}$, which is what makes $\varphi$ admissible in the supremum of Condition 4. In
particular, if $\Op B$ is regular, then $\Op B(\Spc S)\subseteq\Spc S_{\V p}$.

\medskip
\emph{Attainment}:
The reproducing identity \eqref{Eq:Reproducing} gives
$\langle\varphi,\psi\rangle=\|\Op L\psi\|^2_{L_2}$, from which the announced ratio follows.
It attains the Cauchy--Schwarz bound of Remark~\ref{Rem:SharpnessAsDuality}: the supremum of Condition 4 is the norm of the orthogonal projection of
$\Op L\psi$ onto $\overline{\Spc E}$, so that sharpness is a statement of surjectivity rather
than an inequality.

\medskip
\emph{Regular normal operator}:
Let us now assume that $\Op B$ is regular, so that $\Op B\psi\in\Spc S$ for every
$\psi\in\Spc S$. The two preceding steps then produce, for every $\psi\in\Spc S$, a conditioned
test function $\varphi=\Op B\psi\in\Spc S_{\V p}$ with $e_\varphi=\Op L\psi$, which gives
$\Spc E\supseteq\Op L(\Spc S)$. The latter is dense in $L_2$ by Condition 2, and hence so is
$\Spc E$, which is Condition 4.

\medskip
\emph{Regular $\Op L$}:
The adjoint of $\Op L|_{\Spc S}:\Spc S\to\Spc S$ maps $\Spc S'$ into $\Spc S'$, while the
adjoint of $\Op L:\Spc S'\to\Spc S'$ maps $\Spc S=(\Spc S')'$ into $\Spc S$; the two coincide,
so that $\Op L^\ast$ is regular. The composition of two regular operators being regular, the
normal operator $\Op B=\Op L^\ast\Op L$ is regular as well.
\end{proof}

\begin{proposition}[Concrete construction by Fourier multipliers]
\label{Prop:MultiplierConstruction}
Let $(\Spc S,\Spc S')=(\Spc S(\R^d),\Spc S'(\R^d))$ and let $\Op L$ be the Fourier-multiplier
operator $\widehat{\Op L\varphi}=\hat L\,\hat\varphi$, where the symbol $\hat L:\R^d\to\mathbb{C}$ is
measurable, Hermitian-symmetric ($\hat L(-\V\omega)=\overline{\hat L(\V\omega)}$, so that
$\Op L$ maps real functions to real functions), and satisfies, for some order $\gamma>0$, exponents
$\beta,\beta'\ge0$, and positive constants $C,c_1,c_2,c_3$,
\begin{enumerate}
\item[(i)] $|\hat L(\V\omega)|\le C\,(1+\|\V\omega\|)^{\beta}$ for a.e.\ $\V\omega$;
\item[(ii)] $c_1\|\V\omega\|^{\gamma}\le|\hat L(\V\omega)|\le c_2\|\V\omega\|^{\gamma}$ for
$\|\V\omega\|\le1$;
\item[(iii)] $|\hat L(\V\omega)|\ge c_3\,(1+\|\V\omega\|)^{-\beta'}$ for $\|\V\omega\|\ge1$.
\end{enumerate}
Let $\V p$ be a basis of the polynomials of degree at most
$k_\gamma=\lfloor\gamma-\frac d2\rfloor$ (no conditioning if $k_\gamma<0$). Then, Conditions
1--4 of Theorem~\ref{Thm:PracticalChecklist} hold, with the dual energy and the representers
given by
\begin{align*}
Q(\varphi)
=\frac{1}{(2\pi)^d}\int_{\R^d}\frac{|\hat\varphi(\V\omega)|^2}{|\hat L(\V\omega)|^2}\,
\dint\V\omega,
\qquad
\hat e_\varphi=\frac{\hat\varphi}{\overline{\hat L}}.
\end{align*}
\end{proposition}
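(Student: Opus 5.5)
We verify Conditions 1--4 of Theorem~\ref{Thm:PracticalChecklist} directly in the Fourier domain, using Parseval throughout. Under (i), $\Op L$ maps $\Spc S(\R^d)$ continuously into $L_2$, since $\|\Op L\varphi\|_{L_2}^2=(2\pi)^{-d}\int|\hat L|^2|\hat\varphi|^2\dint\V\omega$ is dominated by a Schwartz seminorm. Conditions (ii)--(iii) give $|\hat L|>0$ a.e. This immediately yields Condition~1: $\hat L\hat\varphi=0$ a.e.\ forces $\hat\varphi=0$. It also yields Condition~2: if $w\in L_2$ is orthogonal to $\Op L(\Spc S)$, then $\overline{\hat L}\hat w=0$ as a tempered distribution (the product is $L_1^{\rm loc}$ with polynomial growth), hence $\hat w=0$ a.e.

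For the dual energy, I would first establish the formula for $Q$ by a weighted Cauchy--Schwarz argument. Write $\langle\varphi,\psi\rangle=(2\pi)^{-d}\int(\hat\varphi/\overline{\hat L})\,\overline{\hat L\hat\psi}$. This gives $Q(\varphi)\le(2\pi)^{-d}\int|\hat\varphi|^2/|\hat L|^2$. For the reverse inequality, take $\psi_k$ whose Fourier transform approximates $\hat\varphi/|\hat L|^2$. Concretely, the set $\{\hat L\hat\psi:\psi\in\Spc S\}$ is dense in $L_2$ by Condition~2, so the supremum equals the $L_2$ norm of $\hat\varphi/\overline{\hat L}$, finite or infinite. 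Whenever this norm is finite, the same computation shows $\hat e_\varphi=\hat\varphi/\overline{\hat L}$ satisfies $\Op L^\ast e_\varphi=\iota\varphi$.

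The main step is Condition~3, namely that the integral is finite iff $\varphi\in\Spc S_{\V p}$. At high frequencies, (iii) bounds $1/|\hat L|^2$ by $(1+\|\V\omega\|)^{2\beta'}$, which is harmless against a Schwartz $\hat\varphi$. Near the origin, (ii) makes the integral equivalent to $\int_{\|\V\omega\|\le1}|\hat\varphi|^2\|\V\omega\|^{-2\gamma}$. The moments $\langle x^\alpha,\varphi\rangle$ are, up to constants, the derivatives $\partial^\alpha\hat\varphi(0)$. So $\varphi\in\Spc S_{\V p}$ means that $\hat\varphi$ vanishes to order $k_\gamma+1$ at $0$, which gives $|\hat\varphi|\lesssim\|\V\omega\|^{k_\gamma+1}$. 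Integrability then holds because $2(k_\gamma+1)-2\gamma>-d$, i.e.\ $k_\gamma+1>\gamma-d/2$, which is exactly the definition of the floor.

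Conversely, suppose some moment of order $\le k_\gamma$ is nonzero. Let $m\le k_\gamma$ be the lowest order at which the Taylor polynomial of $\hat\varphi$ is nonzero, with leading homogeneous part $P_m\neq0$. Then $|\hat\varphi|^2\ge\tfrac12|P_m|^2$ on a cone of positive angular measure near $0$, for small radii. Since $2m-2\gamma\le 2k_\gamma-2\gamma\le-d$, the integral diverges. This divergence direction, the angular-measure argument together with the boundary case $\gamma-d/2\in\Z$, is the step needing most care. If $k_\gamma<0$, the first part alone suffices, because $\gamma<d/2$ makes $\|\V\omega\|^{-2\gamma}$ locally integrable.

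Condition~4 would follow from density of the representers. I would show that $\Spc E=\{\hat\varphi/\overline{\hat L}:\varphi\in\Spc S_{\V p}\}$ is dense in $L_2$ by contradiction. Suppose $w\in L_2$ is orthogonal to $\Spc E$. Then $\int\hat\varphi\,\overline{\hat w/\hat L}=0$ for all $\varphi$ whose Fourier transform is supported away from $0$, which is a subset of $\Spc S_{\V p}$. Now $\hat w/\hat L$ is in $L_1^{\rm loc}(\R^d\setminus\{0\})$ by (iii) and the local lower bound in (ii). Hence it vanishes a.e.\ off $0$, so $w=0$. Alternatively one could invoke Corollary~\ref{Cor:AutomaticSharpness}, but $\Op L$ need not be regular for a merely measurable symbol, so the direct density argument is the route I would take.
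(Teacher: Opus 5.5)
Your proof is correct. Conditions 1--3 are handled essentially as in the paper; your cone argument around the leading homogeneous Taylor term $P_m$ spells out the divergence that the paper only asserts. You take a different route in two places.

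\emph{Dual energy.} The paper never evaluates the supremum directly. It invokes the representer criterion \eqref{Eq:RepresenterCriterion} of Lemma~\ref{Lem:UnconditionedCriterion}, so that $Q(\varphi)<\infty$ exactly when $\overline{\hat L}\,\hat e=\hat\varphi$ a.e.\ has a solution in $L_2$. The a.e.\ uniqueness of the measurable solution $\hat\varphi/\overline{\hat L}$ then gives the formula and the divergent case at once. Your Cauchy--Schwarz-plus-density computation settles the case $\hat\varphi/\overline{\hat L}\in L_2$. However, density of $\{\hat L\hat\psi\}$ does not by itself give $Q(\varphi)=\infty$ in the other case. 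You should add one line: if the supremum were finite, the functional would extend to $L_2$ and be represented by some $k\in L_2$ with $\overline{\hat L}\,k=\hat\varphi$ a.e., which forces $k=\hat\varphi/\overline{\hat L}$. This is precisely the paper's uniqueness step, and the divergent half of Condition~3 relies on it.

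\emph{Condition~4.} The paper is constructive. It approximates the would-be maximizer $\Op B\psi$ of Corollary~\ref{Cor:AutomaticSharpness} by $\varphi_k$ with $\hat\varphi_k\in C^\infty_c(A_k)$ on annuli, tuning the tolerance to the lower bound of $|\hat L|$ on $A_k$. It then shows $e_{\varphi_k}\to\Op L\psi$ in $L_2$, i.e.\ $\Op L(\Spc S)\subseteq\overline{\Spc E}$, which is the inequality form of the condition. Your annihilator argument is different: $w\perp\Spc E$ makes $\hat w/\hat L\in L_1^{\rm loc}(\R^d\backslash\{0\})$ annihilate $C^\infty_c(\R^d\backslash\{0\})$, so it vanishes. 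This is shorter, needs only the lower bounds on $|\hat L|$ on compacts away from the origin, and proves density of $\Spc E$ in $L_2$ directly without going through Condition~2. The paper's version, in exchange, exhibits explicit near-maximizers of the supremum and ties the measurable case to the regular one.

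In both arguments, if $\Spc S$ consists of real functions, note that any $\hat\varphi\in C^\infty_c(\R^d\backslash\{0\})$ splits as $\hat\varphi_1+\jj\hat\varphi_2$ with Hermitian-symmetric parts of the same support. Hence restricting to real $\varphi$ loses nothing.
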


\begin{proof}
Condition (i) gives the continuity of $\Op L:\Spc S\to L_2$, and Conditions (ii)--(iii) imply
that $\hat L\neq0$ a.e., which yields Conditions 1 and 2 of the checklist; in particular,
$\Spc R=L_2$.

\medskip
\emph{Dual energy and representers}:
The adjoint $\Op L^\ast$ is the convolution with Fourier symbol $\overline{\hat L}$. For
$e\in L_2$, the product $\overline{\hat L}\,\hat e$ is locally integrable by (i), so that the
representer equation $\Op L^\ast e=\iota\varphi$ of \eqref{Eq:RepresenterCriterion} is an
identity between locally integrable functions; namely,
\begin{align*}
\overline{\hat L}(\V\omega)\,\hat e(\V\omega)=\hat\varphi(\V\omega)
\qquad\text{for a.e. }\V\omega\in\R^d .
\end{align*}
Since $\hat L\neq0$ a.e., this equation admits the single measurable solution
$\hat e=\hat\varphi/\overline{\hat L}$, which lies in $L_2$ if and only if the integral
$\int_{\R^d}|\hat\varphi|^2/|\hat L|^2\,\dint\V\omega$ converges. The criterion
\eqref{Eq:RepresenterCriterion} of Lemma~\ref{Lem:UnconditionedCriterion}, applied with
$\Op T=\Op L$, therefore delivers the dual energy at once:
\begin{align*}
\hat e_\varphi=\frac{\hat\varphi}{\overline{\hat L}},
\qquad
Q(\varphi)
=\|e_\varphi\|_{L_2}^2
=\frac{1}{(2\pi)^d}\int_{\R^d}
\frac{|\hat\varphi(\V\omega)|^2}{|\hat L(\V\omega)|^2}\,\dint\V\omega ,
\end{align*}
where both sides are infinite when the integral diverges, the representer equation having then
no solution in $L_2$. Remarkably, neither a supremum nor a limiting argument is required.

\medskip
\emph{Finiteness domain}:
The integral converges over
$\{\|\V\omega\|\ge1\}$ for every $\varphi\in\Spc S$, by (iii). Near the origin, the two-sided
bound (ii) shows that it converges if and only if
$\int_{\|\V\omega\|\le1}\|\V\omega\|^{-2\gamma}|\hat\varphi|^2\,\dint\V\omega<\infty$: if the
moments of $\varphi$ of order at most $k_\gamma$ vanish, then
$|\hat\varphi(\V\omega)|\lesssim\|\V\omega\|^{k_\gamma+1}$ and the radial exponent
$2(k_\gamma+1)-2\gamma+d-1$ exceeds $-1$; if a moment of order $j\le k_\gamma$ survives, then
the leading Taylor term of $\hat\varphi$ has order $j$ and forces divergence, because
$2j-2\gamma+d\le0$. Hence $\{Q<\infty\}=\Spc S_{\V p}$, which is Condition 3.

\medskip
\emph{Sharpness}:
A multiplier is regular in the sense of Definition~\ref{Def:Regular} exactly when its symbol
belongs to the space $\Spc O_M(\R^d)$ of smooth functions whose derivatives are all of at most
polynomial growth, which is the classical space of multipliers of $\Spc S(\R^d)$
\cite{Treves2006}. Whenever $\hat L$ or $|\hat L|^2$ is of that type,
Corollary~\ref{Cor:AutomaticSharpness} applies and Condition 4 requires no verification: such is
the case for the polynomial symbols $P(\jj\omega)$ of Example~\ref{Ex:ExpSplines}, and for the
fractional Laplacians of integer order $\gamma$ (Example~\ref{Ex:FracLaplacian}), whose symbol
$\|\V\omega\|^\gamma$ is smooth only for even $\gamma$, while
$|\hat L|^2=\|\V\omega\|^{2\gamma}$ is a polynomial for every integer $\gamma$.

For a general measurable symbol, we approximate the maximizer $\varphi=\Op B\psi$ of
Corollary~\ref{Cor:AutomaticSharpness}, whose transform is $|\hat L|^2\hat\psi$. Let
$\psi\in\Spc S$ and let $A_k=\{1/k<\|\V\omega\|<k\}$ be the open annulus, on whose closure the
bounds (i)--(iii) confine $|\hat L|$ between two positive constants $c_k$ and $C_k$. For each
$k$, we select $\hat\varphi_k\in C^\infty_c(A_k)$ with
$\big\|\hat\varphi_k-|\hat L|^2\hat\psi\,\Indic_{A_k}\big\|_{L_2}\le c_k/k$, which is
possible because $|\hat L|^2\hat\psi\,\Indic_{A_k}$ is bounded and supported in $A_k$. The
vanishing of $\hat\varphi_k$ near the origin annihilates all the moments of $\varphi_k$, so
that $\varphi_k\in\Spc S_{\V p}$, and
\begin{align*}
\big\|\hat e_{\varphi_k}-\hat L\hat\psi\big\|_{L_2}
&\le\frac{1}{c_k}
\big\|\hat\varphi_k-|\hat L|^2\hat\psi\,\Indic_{A_k}\big\|_{L_2}
+\big\|\hat L\hat\psi\,\Indic_{\R^d\backslash A_k}\big\|_{L_2}\\
&\le\frac1k
+\big\|\hat L\hat\psi\,\Indic_{\R^d\backslash A_k}\big\|_{L_2},
\end{align*}
where the choice of the tolerance $c_k/k$ compensates for the degradation of $1/c_k$ as the
annulus grows. The right-hand side tends to $0$, its second term because
$\hat L\hat\psi\in L_2$ and $\Indic_{A_k}\to1$ a.e. Therefore
$\Op L\psi\in\overline{\Spc E}$ for every $\psi\in\Spc S$: the orthogonal projection in the
supremum of Condition 4 is lossless, which is the announced sharpness.
\end{proof}
\subsection{Factorization of Conditional Riesz Maps}
In finite-dimensions with $(\Spc S,\Spc S')=(\R^N,\R^N)$, the space $\Spc H=(\R^N,|\cdot|_\M A)$ has the Hilbert--Gelfand property if and only its norm is induced by a symmetric matrix $\M A \in \R^{N \times N}$, which enables the formulation of other equivalent criteria from matrix algebra. For instance, the symmetry and non-singularity of $\M A$ are equivalent to the existence of an invertible matrix $\M L \in \R^{N \times N}$ (not necessarily unique) such that $\M A=(\M L^{-1})^\Tr\M L^{-1}$. This factorization induces an isometric equivalence between $\Spc H=\Op L^\ast(\ell_2)$ and the Euclidean space $\ell_2=(\R^N,\|\cdot\|_2)$ where $\Op L^\ast: \M x \mapsto \M L^\Tr \M x$.

Our next result states that a similar equivalence holds in infinite dimensions within the class of Hilbert--Gelfand spaces
induced by a $\V p$-condi\-tionally positive operator. The crucial difference with the finite-dimensional case is that not all symmetric positive (semi-)definite operators $\Op A: \Spc S \to \Spc S'$ are factorizable. For instance, the operators $\Op A$ and $\Op A_+=\Op A_{\V \phi} + \Op R_{\V p}$ of Theorem \ref{Thm:Augmentation}
 are not factorizable, while the $\V p$-corrected variant $\Op A_{\V \phi}$ of the former generally is, as asserted in Item 3 below.
\begin{proposition}[Factorization of the conditional inverse]
\label{Prop:Factorization}
Let Conditions 1--4 of Theorem~\ref{Thm:PracticalChecklist} hold and let us define the
\emph{conditioned inverse} of $\Op L$ as
\begin{align}
\label{Eq:CondInverse}
\Op L^{-1}_{\V \phi}
\eqdef
\Op A_{\V \phi}\,\Op L^\ast:\ L_2\to\Spc S'_{\V \phi}\subset\Spc S' .
\end{align}
Then, the following properties hold.
\begin{enumerate}
\item \emph{Inversion}: $\Op L^{-1}_{\V \phi}\,\Op L=\Op P_{\V \phi}^\ast$ on $\Spc S$ and
$\Op L\,\Op L^{-1}_{\V \phi}=\Identity$ on $L_2$. Moreover, $\Op L^{-1}_{\V \phi}$ is an
isometric isomorphism from $L_2$ onto $\Spc H'_{\Op A}$.
\item \emph{Adjoint}: the adjoint
$\Op L^{-1\ast}_{\V \phi}=\Op L\,\Op A_{\V \phi}:\Spc S\to L_2$ is the representer map
\begin{align*}
\Op L^{-1\ast}_{\V \phi}\varphi
=
e_{\Op P_{\V \phi}\varphi}
=
{\Op L^\ast}^{-1}\iota(\Op P_{\V \phi}\varphi)\in L_2 ,
\end{align*}
with $\|\Op L^{-1\ast}_{\V \phi}\varphi\|_{L_2}^2=Q(\Op P_{\V \phi}\varphi)$ and
$\ker\Op L^{-1\ast}_{\V \phi}=\Spc N_{\V \phi}$.
\item \emph{Factorization}: the conditional inverse admits the factorization
\begin{align*}
\Op A_{\V \phi}
=
\Op L^{-1}_{\V \phi}\circ\Op L^{-1\ast}_{\V \phi}:
\Spc S\to L_2 \to \Spc S'_{\V \phi}\subset\Spc S',
\end{align*}
in exact duality with the factorization $\Op B=\Op L^\ast\Op L$ of the normal operator.
\end{enumerate}
\end{proposition}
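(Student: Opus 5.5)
The plan is to make sense of $\Op L^{-1}_{\V\phi}$ on all of $L_2$ through the Hilbertian extensions, and then to read the three items off the isometries already available. By Condition 2 and Theorem~\ref{Thm:PracticalChecklist}, $\Op L^\ast$ maps $L_2$ isometrically onto the realized space $\Spc H_{\Op A}=\Op L^\ast(L_2)$, with $\|\Op L^\ast w\|_{\Spc H_{\Op A}}=\|w\|_{L_2}$, and its inverse is the extension $\Op T$ of $\varphi\mapsto e_\varphi$ (Corollary~\ref{Cor:L2RealizationConditional}). On $\Spc S_{\V p}$, the operator $\Op A_{\V\phi}=\Op P_{\V\phi}^\ast\Op A$ is the concrete realization of the isometry $\pi\Op A:\Spc H_{\Op A}\to\dot{\Spc H}_{\Op B}$ followed by $\Op P^\ast_{\V\phi}$ (Theorem~\ref{Thm:Concretization}, Items 1--2). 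It therefore extends isometrically to $\Spc H_{\Op A}\to\Spc H'_{\Op A}$. Composing the two maps, $\Op L^{-1}_{\V\phi}=\Op A_{\V\phi}\Op L^\ast$ is an isometric isomorphism $L_2\to\Spc H'_{\Op A}\subset\Spc S'_{\V\phi}$. This is the last claim of Item 1, and it agrees with $\Op P_{\V\phi}^\ast\widetilde{\Op L}^{-1}$ from Corollary~\ref{Cor:L2RealizationConditional}. The only delicate point is to check that the two extensions coincide on a dense set. For that I use $\Op L^\ast e_\varphi=\iota\varphi$, which gives $\Op L^{-1}_{\V\phi}e_\varphi=\Op A_{\V\phi}\varphi$ for $\varphi\in\Spc S_{\V p}$, together with the density of $\Spc E$ in $L_2$ (Condition 4).

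For the inversion identities in Item 1, the relation $\Op L^{-1}_{\V\phi}\Op L=\Op A_{\V\phi}\Op L^\ast\Op L=\Op A_{\V\phi}\Op B=\Op P_{\V\phi}^\ast$ on $\Spc S$ is immediate from Theorem~\ref{Thm:Concretization}, Item 2. For the other identity, $\Op L\Op L^{-1}_{\V\phi}=\Identity$ on $L_2$, I interpret $\Op L$ through its extension $\widetilde{\Op L}$. Consider $w=e_\varphi$ with $\varphi\in\Spc S_{\V p}$. Theorem~\ref{Thm:NormalPair} gives $\pi\Op A\varphi=\widetilde{\Op L}^{-1}e_\varphi$, and hence $\widetilde{\Op L}\pi\Op A_{\V\phi}\varphi=e_\varphi$. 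Both sides are bounded on $L_2$, so density of $\Spc E$ extends the identity to all of $L_2$. The extension of $\Op L$ ignores $\Spc N_{\V p}$, so applying it to the representative $\Op P_{\V\phi}^\ast$ is harmless.

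For Item 2, I compute the adjoint directly. For $\varphi\in\Spc S$ and $w\in L_2$ we have $\langle \Op L^{-1}_{\V\phi}w,\varphi\rangle=\langle \Op A_{\V\phi}\Op L^\ast w,\varphi\rangle$. Since $\Op A_{\V\phi}=\Op P_{\V\phi}^\ast\Op A\Op P_{\V\phi}$ and $\Op A$ is symmetric on $\Spc S_{\V p}$ (Remark~\ref{Rem:AutomaticSymmetry}), this pairing equals $\langle \Op L^\ast w,\Op A_{\V\phi}\varphi\rangle$, read in the $\Spc H_{\Op A}$--$\Spc H'_{\Op A}$ duality, which in turn equals $\langle w,\Op L\Op A_{\V\phi}\varphi\rangle_{L_2}$. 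By Item 1 applied with $\Op P_{\V\phi}\varphi\in\Spc S_{\V p}$, $\Op L\Op A_{\V\phi}\varphi=\widetilde{\Op L}\pi\Op A(\Op P_{\V\phi}\varphi)=e_{\Op P_{\V\phi}\varphi}$. The norm identity then follows from $\|e_\varphi\|^2=Q(\varphi)$. For the kernel: $e_{\Op P_{\V\phi}\varphi}=0$ iff $\Op P_{\V\phi}\varphi=0$, by injectivity of $\Op L^\ast$ and $\Op L^\ast e=\iota\varphi$, that is, iff $\varphi\in\Spc N_{\V\phi}$.

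Item 3 then follows by composition. For $\varphi\in\Spc S$, we get $\Op L^{-1}_{\V\phi}\Op L^{-1\ast}_{\V\phi}\varphi=\Op A_{\V\phi}\Op L^\ast e_{\Op P_{\V\phi}\varphi}=\Op A_{\V\phi}\Op P_{\V\phi}\varphi=\Op A_{\V\phi}\varphi$, because $\Op P_{\V\phi}$ is idempotent. I expect the main obstacle to be the well-definedness of the adjoint pairing in Item 2. There $\Op L^\ast w$ lies only in $\Spc H_{\Op A}$ and not in $\Spc S$, so the symmetry of $\Op A$ has to be transported through the extended duality of Proposition~\ref{Prop:Realization}. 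I would handle this by verifying the identity first for $w\in\Spc E$, where $\Op L^\ast w\in\Spc S_{\V p}$, and then passing to the limit using the isometries.
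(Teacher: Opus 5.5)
Your proposal is correct and follows essentially the paper's route. Item 1 composes the isometries $\Op L^\ast:L_2\to\Spc H_{\Op A}$ and $\Op A_{\V\phi}:\Spc H_{\Op A}\to\Spc H'_{\Op A}$ and uses $\Op A_{\V\phi}\Op B=\Op P_{\V\phi}^\ast$, and Item 2 combines the symmetry of $\Op A_{\V\phi}$ with $\widetilde{\Op L}\circ\pi\Op A=\Op T$. There are two local differences from the paper, both valid:
\begin{itemize}
\item You check $\Op L\Op L^{-1}_{\V\phi}=\Identity$ on the dense set $\Spc E$ (Condition 4); the paper checks it on $\Op L(\Spc S)$ via $\Op L\Op P_{\V\phi}^\ast=\Op L$ (Condition 2).
\item In Item 3 you use $\Op L^\ast e_{\Op P_{\V\phi}\varphi}=\iota(\Op P_{\V\phi}\varphi)$, so $\Op A_{\V\phi}$ only ever acts on a test function. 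This is slightly cleaner than the paper's chain $\Op A_{\V\phi}\Op B\Op A_{\V\phi}=\Op P_{\V\phi}^\ast\Op A_{\V\phi}$, which applies the inversion identity to the non-test element $\Op A_{\V\phi}\varphi$.
\end{itemize}
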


Let us stress that \eqref{Eq:CondInverse} involves the adjoint $\Op L^\ast:L_2\to\Spc S'$, which is
primary, together with the extension of $\Op A_{\V \phi}$ granted by
Definition~\ref{Def:Intrinsic}; no inverse of an isometric extension is invoked.

\begin{proof}
\medskip
\emph{Inversion}:
For $\psi\in\Spc S$, Theorem~\ref{Thm:Concretization} gives
\begin{align*}
\Op L^{-1}_{\V \phi}\Op L\psi
=\Op A_{\V \phi}\,\Op L^\ast\Op L\,\psi
=\Op A_{\V \phi}\Op B\psi
=\Op P_{\V \phi}^\ast\psi .
\end{align*}
The extension of $\Op L$ annihilates $\Spc N_{\V p}$, so that $\Op L\Op P_{\V \phi}^\ast=\Op L$;
the application of $\Op L$ to the identity above therefore yields
$\Op L\Op L^{-1}_{\V \phi}\Op L\psi=\Op L\psi$, which extends to $L_2$ by the density of
$\Op L(\Spc S)$ (Condition 2). By Corollary~\ref{Cor:L2RealizationConditional}, the adjoint
$\Op L^\ast$ maps $L_2$ isometrically onto $\Spc H_{\Op A}=\Op L^\ast(L_2)$, while
$\Op A_{\V \phi}$ extends to an isometry of $\Spc H_{\Op A}$ onto
$\Spc H'_{\Op A}=\Op P_{\V \phi}^\ast(\dot{\Spc H}_{\Op B})$ (Theorem~\ref{Thm:Concretization},
Item 1). The composition $\Op L^{-1}_{\V \phi}=\Op A_{\V \phi}\Op L^\ast$ is thus an isometric
isomorphism from $L_2$ onto $\Spc H'_{\Op A}$.

\medskip
\emph{Adjoint}:
The operator $\Op A_{\V \phi}$ is symmetric (Remark~\ref{Rem:AutomaticSymmetry}) and the adjoint
operation is involutive, from which we get
$\Op L^{-1\ast}_{\V \phi}=(\Op A_{\V \phi}\Op L^\ast)^\ast=\Op L\,\Op A_{\V \phi}$. Let
$\varphi\in\Spc S$. Theorem~\ref{Thm:Concretization} (Item 2) gives
$\pi\Op A_{\V \phi}\varphi=\pi\Op A(\Op P_{\V \phi}\varphi)$, while
Theorem~\ref{Thm:NormalPair} gives $\widetilde{\Op L}\circ\pi\Op A=\Op T$ on $\Spc H_{\Op A}$;
hence $\Op L^{-1\ast}_{\V \phi}\varphi=e_{\Op P_{\V \phi}\varphi}$, whose squared $L_2$-norm is
$Q(\Op P_{\V \phi}\varphi)$ by Theorem~\ref{Thm:PracticalChecklist}. Its kernel is
$\ker\Op P_{\V \phi}=\Spc N_{\V \phi}$, because $\varphi\mapsto e_\varphi$ is injective on
$\Spc S_{\V p}$.

\medskip
\emph{Factorization}:
The range of $\Op A_{\V \phi}$ lies in $\Spc S'_{\V \phi}$, so that
$\Op P_{\V \phi}^\ast\Op A_{\V \phi}=\Op A_{\V \phi}$. Together with the inversion identity
$\Op A_{\V \phi}\Op B=\Op P_{\V \phi}^\ast$ of Theorem~\ref{Thm:Concretization}, this yields
\begin{align*}
\Op L^{-1}_{\V \phi}\,\Op L^{-1\ast}_{\V \phi}
=\Op A_{\V \phi}\,\Op L^\ast\Op L\,\Op A_{\V \phi}
=\Op A_{\V \phi}\,\Op B\,\Op A_{\V \phi}
=\Op P_{\V \phi}^\ast\,\Op A_{\V \phi}
=\Op A_{\V \phi} ,
\end{align*}
which is the announced factorization.
\end{proof}

\begin{remark}
\label{Rem:FactorizationConsequences}
The factorization exhibits $\Op A_{\V \phi}$ as a Gram operator, hence manifestly
symmetric, positive-semi-definite on all of $\Spc S$, with null space $\Spc N_{\V \phi}=\ker\Op P_{\V \phi}$ and
with quadratic form
$\langle\Op A_{\V \phi}\varphi,\varphi\rangle=\|e_{\Op P_{\V \phi}\varphi}\|_{L_2}^2
=Q(\Op P_{\V \phi}\varphi)$.
\end{remark}

\begin{corollary}[Conditioned factorization through a Green inverse of $\Op L^\ast$]
\label{Cor:GreenFactor}
Let Conditions 1--4 of Theorem~\ref{Thm:PracticalChecklist} hold, and let us assume that
$\Op L^\ast$ admits a Green-function inverse; that is, a continuous operator
$\Op G:\Spc S\to\Spc S'$ such that $\Op G\Op L^\ast=\Identity$ on $L_2$ and
$\Op L^\ast\Op G=\Identity$ on $\Spc S_{\V p}$ through the
Hilbertian extensions, so that $\Op G=(\Op L^\ast)^{-1}$; its adjoint $\Op G^\ast$ is then the
Green's operator of $\Op L$. If
\begin{align}
\label{Eq:L2inclusion}
\Op G(\Spc S_{\V p})\subseteq L_2,
\end{align}
then $\Op G\varphi=e_\varphi={\Op L^\ast}^{-1}\iota\varphi$ for every
$\varphi\in\Spc S_{\V p}$, so that $\Op G\Op P_{\V \phi}=\Op L^{-1\ast}_{\V \phi}$ on $\Spc S$.
Consequently, the conditioned Gram operator
\begin{align*}
\Op A_{\V \phi}
\eqdef
\Op P_{\V \phi}^\ast\,\Op G^\ast\Op G\,\Op P_{\V \phi}
=(\Op G\Op P_{\V \phi})^\ast\,(\Op G\Op P_{\V \phi}):\Spc S\to\Spc S'
\end{align*}
is well defined and admissible, and it coincides with the canonical factorization
$\Op L^{-1}_{\V \phi}\Op L^{-1\ast}_{\V \phi}$ of Proposition~\ref{Prop:Factorization}, in
exact duality with the factorization $\Op B=\Op L^\ast\Op L$ of the normal operator. In the
Fourier-multiplier setting of Proposition~\ref{Prop:MultiplierConstruction}, $\Op G$ is the
multiplier operator with symbol $1/\overline{\hat L}$, and the requirement
$\Op G(\Spc S_{\V p})\subseteq L_2$ is met by the canonical normalization of that multiplier,
for which $\|\Op G\varphi\|_{L_2}^2=Q(\varphi)$, whose finiteness on $\Spc S_{\V p}$ is
Condition 3.
\end{corollary}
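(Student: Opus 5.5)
The plan is to show first that $\Op G$ reproduces the representers on $\Spc S_{\V p}$, and then to feed this into the factorization of Proposition~\ref{Prop:Factorization}.

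\emph{Identification of $\Op G$ on $\Spc S_{\V p}$.} Let $\varphi\in\Spc S_{\V p}$ and set $w\eqdef\Op G\varphi$, which lies in $L_2$ by the inclusion \eqref{Eq:L2inclusion}. The second Green identity, $\Op L^\ast\Op G=\Identity$ on $\Spc S_{\V p}$, is read through the Hilbertian extensions, which is legitimate because $w\in L_2$ lies in the domain of the primary adjoint $\Op L^\ast:L_2\to\Spc S'$. It gives $\Op L^\ast w=\iota\varphi$. Condition 2 gives $\Spc R=L_2$, so $w\in\Spc R$. The uniqueness of the representer in \eqref{Eq:RepresenterCriterion}, applied with $\Op T=\Op L$ (injective by Condition 1), then yields $w=e_\varphi={\Op L^\ast}^{-1}\iota\varphi$. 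For arbitrary $\varphi\in\Spc S$, we have $\Op P_{\V\phi}\varphi\in\Spc S_{\V p}$, so $\Op G\Op P_{\V\phi}\varphi=e_{\Op P_{\V\phi}\varphi}$. By Item 2 of Proposition~\ref{Prop:Factorization}, this is exactly $\Op L^{-1\ast}_{\V\phi}\varphi$. Hence $\Op G\Op P_{\V\phi}=\Op L^{-1\ast}_{\V\phi}$ as maps $\Spc S\to L_2$. The first Green identity $\Op G\Op L^\ast=\Identity$ on $L_2$ is used only to justify the notation $\Op G=(\Op L^\ast)^{-1}$; it is consistent with this identification because $\Op L^\ast$ is injective on $L_2$.

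\emph{Factorization and admissibility.} Taking adjoints in $\Op G\Op P_{\V\phi}=\Op L^{-1\ast}_{\V\phi}$ and using the involutivity of the adjoint gives $(\Op G\Op P_{\V\phi})^\ast=\Op P_{\V\phi}^\ast\Op G^\ast=\Op L^{-1}_{\V\phi}$ on $L_2$. Therefore $(\Op G\Op P_{\V\phi})^\ast(\Op G\Op P_{\V\phi})=\Op L^{-1}_{\V\phi}\Op L^{-1\ast}_{\V\phi}$. By Item 3 of Proposition~\ref{Prop:Factorization}, this product equals the conditioned operator $\Op A_{\V\phi}$ of Theorem~\ref{Thm:Concretization}. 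Well-definedness follows because $\Op G\Op P_{\V\phi}:\Spc S\to L_2$ is continuous and $\Op G^\ast$ is then applied only to elements of $L_2$. Admissibility follows because the quadratic form is $Q(\Op P_{\V\phi}\varphi)$ (Remark~\ref{Rem:FactorizationConsequences}), which coincides with $Q$ on $\Spc S_{\V p}$. This is the form that Theorem~\ref{Thm:PracticalChecklist} prescribes for every admissible operator. The uniqueness of the conditioned operator (Remark~\ref{Rem:AutomaticSymmetry}) then identifies it with the admissible $\Op A_{\V\phi}$.

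\emph{Fourier-multiplier case.} In the setting of Proposition~\ref{Prop:MultiplierConstruction}, I would take $\Op G$ to be the multiplier with symbol $1/\overline{\hat L}$. Then $\widehat{\Op G\varphi}=\hat\varphi/\overline{\hat L}=\hat e_\varphi$, and Parseval gives $\|\Op G\varphi\|_{L_2}^2=Q(\varphi)$, which is finite on $\Spc S_{\V p}$ by Condition 3. This gives \eqref{Eq:L2inclusion}.

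\emph{Main obstacle.} The delicate point is the continuity of $\Op G$ as a map $\Spc S\to\Spc S'$ when $1/\overline{\hat L}$ is singular at the origin. The canonical normalization must be chosen so that its restriction to $\Spc S_{\V p}$ is the $L_2$-valued integral above. I would settle this on $\Spc S_{\V p}$ by defining $\Op G$ there as ${\Op L^\ast}^{-1}\iota$, which is continuous into $L_2$ by the Banach--Steinhaus argument of Lemma~\ref{Lem:UnconditionedCriterion}. Only this restriction enters the composition $\Op G\Op P_{\V\phi}$, so the behaviour of $\Op G$ off $\Spc S_{\V p}$ does not affect the conclusion.
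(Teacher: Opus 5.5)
Your proposal is correct and follows the paper's proof essentially step for step. You identify $\Op G\varphi=e_\varphi$ on $\Spc S_{\V p}$ from $\Op L^\ast w=\iota\varphi$ and the injectivity of $\Op L^\ast$ on $L_2$ (Condition 2), obtain $\Op G\Op P_{\V\phi}=\Op L^{-1\ast}_{\V\phi}$ from Item 2 of Proposition~\ref{Prop:Factorization}, and conclude with Item 3 of that proposition and the Parseval computation in the multiplier case. The differences are cosmetic: you take adjoints at the operator level where the paper compares the forms $\langle\Op G\Op P_{\V\phi}\varphi,\Op G\Op P_{\V\phi}\psi\rangle_{L_2}$, and your extra remarks on admissibility via $Q$ and on the continuity of $\Op G$ near the singularity of the symbol are harmless additions the paper does not spell out.
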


\begin{proof}
\medskip
\emph{Identification of the factor}:
Let $\varphi\in\Spc S_{\V p}$ and $w\eqdef\Op G\varphi\in L_2$. The inversion property, read
through the Hilbertian extensions, gives $\Op L^\ast w=\Op L^\ast\Op G\varphi=\iota\varphi
=\Op L^\ast e_\varphi$. Since $\ker(\Op L^\ast)=\{0\}$ by Condition 2, we
conclude that $w=e_\varphi$. The identity $\Op G\Op P_{\V \phi}=\Op L^{-1\ast}_{\V \phi}$ then follows
from Proposition~\ref{Prop:Factorization}, which gives
$\Op L^{-1\ast}_{\V \phi}\varphi=e_{\Op P_{\V \phi}\varphi}$.

\medskip
\emph{Conditioned Gram operator}:
The map $\Op G\Op P_{\V \phi}$ sends $\Spc S$ into $L_2$, which makes the composition with the adjoint
well defined. For $\varphi,\psi\in\Spc S$,
\begin{align*}
\langle\Op P_{\V \phi}^\ast\Op G^\ast\Op G\,\Op P_{\V \phi}\varphi,\psi\rangle
=\langle\Op G\Op P_{\V \phi}\varphi,\Op G\Op P_{\V \phi}\psi\rangle_{L_2}
=\langle e_{\Op P_{\V \phi}\varphi},e_{\Op P_{\V \phi}\psi}\rangle_{L_2},
\end{align*}
which is the quadratic form of the admissible operator that
Lemma~\ref{Lem:UnconditionedCriterion} attaches to $\Op T=\Psi$, as well as the form of the factorization
$\Op L^{-1}_{\V \phi}\Op L^{-1\ast}_{\V \phi}$ of Proposition~\ref{Prop:Factorization}; the
three operators therefore coincide.

\medskip
\emph{Multiplier case}:
The symbol of $\Op G$ is $1/\overline{\hat L}$, so that $\Op G\varphi=e_\varphi$ and
$\|\Op G\varphi\|_{L_2}^2=Q(\varphi)$ by
Proposition~\ref{Prop:MultiplierConstruction}, which is finite on $\Spc S_{\V p}$ by
Condition 3.
\end{proof}

\begin{remark}[Normalization of the Green's function and effectiveness]
\label{Rem:GreenNormalization}
The inclusion \eqref{Eq:L2inclusion} constrains the choice of $\Op G$ and not the
operator $\Op L$: it fixes the additive ambiguity of the fundamental solution, and the canonical
one meets it. What it excludes are exotic alternatives, such as
$\rho_{\Op L^\ast}(x)=-\tfrac{1}{2\pi}\ln\|\V x\|+(x_1^2-x_2^2)$ for $\Op L=-\Delta$ on $\Spc S(\R^2)$, whose
harmonic correction of degree $2$ survives the moment conditions of order at most $1$. The
conditioning itself is unavoidable: since $\Op G$ maps the complement of $\Spc S_{\V p}$ outside of
$L_2$, the unconditioned composition $\Op G^\ast\Op G$ is ill-defined. Proposition~\ref{Prop:Factorization}
and Corollary~\ref{Cor:GreenFactor} operate in opposite directions: the former decomposes a given
$\Op A_{\V \phi}$ into the analysis--synthesis pair $(\Op L^{-1\ast}_{\V \phi},\Op L^{-1}_{\V \phi})$,
whereas the latter produces $\Op A_{\V \phi}$ from a solution $e_\varphi=\rho_{\Op L^\ast}\ast\varphi$
of the representer equation $\Op L^\ast e=\iota\varphi$, which is available in closed form for the
Fourier multipliers of Proposition~\ref{Prop:MultiplierConstruction}.
\end{remark}

\subsection{The Native Space and its Uniqueness}
\label{Sec:NativeUniqueness}
Let $\Op L:\Spc S\to L_2$ be a candidate regularization operator that meets the admissibility
conditions of Theorem~\ref{Thm:PracticalChecklist}. The space that underlies the variational
theory of splines is then the native space
\begin{align}
\Spc H_{\Op L}
\eqdef
\overline{\big(\Spc S,|\cdot|_{\Op B_+}\big)},
\qquad
|f|_{\Op B_+}^2=\|\Op L f\|_{L_2}^2+|\V \phi(f)|_2^2,\label{Eq:NativeSpace}
\end{align}
which is induced by the augmented operator $\Op B_+=\Op L^\ast\Op L+\Op R_{\V \phi}$ of
Theorem~\ref{Thm:Augmentation}, where $\V \phi\subset\Spc S$ is biorthogonal to a basis
$\V p$ of the attainable null space $\Spc N_{\Op B}=\Spc N_{\V p}$. In the notation of the
previous sections, $\Spc H_{\Op L}=\Spc H'$: it is the concrete dual of the Hilbert--Gelfand
space $\Spc H$.
The description \eqref{Eq:NativeSpace} is remarkably compact, especially in comparison with the distributional characterization of the polyharmonic native spaces in \cite{Duchon1977}; Remark~\ref{Rem:Duchon} explains why the two spaces differ in high dimensions.

The definition \eqref{Eq:NativeSpace} involves two arbitrary choices, the basis $\V p$ of the attainable null space and the biorthogonal family $\V \phi$, and it writes $\|\Op L f\|_{L_2}$ for elements $f$ of a completion, although $\Op L$ is only defined on $\Spc S$. The following proposition settles both points. It shows that $\Op L$ extends to the native space, that $\Spc H_{\Op L}$ is independent of $\V p$ and $\V \phi$ up to an equivalence of norms and that, as a set, it is the only Hilbert--Gelfand space on which the energy is a bounded and surjective map onto $L_2$. It also identifies the norms \eqref{Eq:NativeSpace} among all equivalent Hilbertian norms.

\begin{proposition}[Extension of $\Op L$ and uniqueness of the native space]
\label{Prop:NativeUniqueness}
Let $\Op L:\Spc S\to L_2$ satisfy Conditions 1--4 of Theorem~\ref{Thm:PracticalChecklist}, let $\V p$ be a basis of $\Spc N_{\Op B}=\Spc N_{\V p}$, and let $\Spc H_{\Op L}$ be the native space \eqref{Eq:NativeSpace} for some family $\V \phi\subset\Spc S$ that is biorthogonal to $\V p$. Then, the following properties hold.
\begin{enumerate}
\item \emph{Extension of $\Op L$}: The map $\psi\mapsto\Op L\psi$ extends by continuity from $\Spc S$ to a bounded operator $\Op L:\Spc H_{\Op L}\to L_2$, which is given by
\begin{align}
\label{Eq:LExtension}
\Op L f=\widetilde{\Op L}\,\pi f,\qquad f\in\Spc H_{\Op L},
\end{align}
where $\widetilde{\Op L}:\dot{\Spc H}_{\Op B}\to L_2$ is the isometric isomorphism of Theorem~\ref{Thm:PracticalChecklist}. This extension is surjective, its null space is $\Spc N_{\V p}$, and $|f|_{\Op B_+}^2=\|\Op L f\|_{L_2}^2+|\V \phi(f)|_2^2$ for every $f\in\Spc H_{\Op L}$. Moreover, $\Spc H_{\Op L}=\{f\in\Spc S':\ \pi f\in\dot{\Spc H}_{\Op B}\}$ does not depend on $\V \phi$, and two admissible choices of $\V \phi$ yield equivalent norms (Corollary~\ref{Cor:PhiIndependence}).
\item \emph{Uniqueness as a set}: Let $\Spc X$ be a Hilbert--Gelfand space such that $\psi\mapsto\Op L\psi$ extends from $\Spc S$ to a bounded operator $\Op L_{\Spc X}:\Spc X\to L_2$ that is surjective. Then, $\Spc X=\Spc H_{\Op L}$ as sets, $\Op L_{\Spc X}$ coincides with the extension \eqref{Eq:LExtension}, and $\|\cdot\|_{\Spc X}$ is equivalent to $|\cdot|_{\Op B_+}$.
\item \emph{Characterization of the norms \eqref{Eq:NativeSpace}}: Let $\Spc X$ be as in Item 2, and let $\Spc X'$ be its dual, which is a Hilbert--Gelfand space whose pairing with $\Spc X$ extends the canonical one (Theorem~\ref{Theo:HilbertPositiveDef}). Then, $\|f\|_{\Spc X}=\|\Op L f\|_{L_2}$ for every $f$ in the orthogonal complement $\Spc N_{\V p}^{\perp}$ of $\Spc N_{\V p}$ in $\Spc X$ if and only if there exist a basis $\tilde{\V p}$ of $\Spc N_{\V p}$ and a family $\tilde{\V \phi}\subset\Spc X'$ with $\langle\tilde p_m,\tilde\phi_n\rangle=\delta_{mn}$ such that
\begin{align}
\label{Eq:AugmentedNormGeneral}
\|f\|_{\Spc X}^2=\|\Op L f\|_{L_2}^2+|\tilde{\V \phi}(f)|_2^2,\qquad f\in\Spc X.
\end{align}
In that case, the map $f\mapsto(\Op L f,\tilde{\V \phi}(f))$ is a unitary isomorphism from $\Spc X$ onto $L_2\times\R^{N_0}$. When $\tilde{\V \phi}\subset\Spc S$, the space $\Spc X$ is the native space \eqref{Eq:NativeSpace} that is associated with the biorthogonal pair $(\tilde{\V p},\tilde{\V \phi})$.
\end{enumerate}
\end{proposition}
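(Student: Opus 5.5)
Item~1 will come from the direct-sum structure of Theorem~\ref{Thm:Augmentation}. For $\psi\in\Spc S$ we have $|\psi|_{\Op B_+}^2=\|\Op L\psi\|_{L_2}^2+|\V\phi(\psi)|_2^2$, so $\|\Op L\psi\|_{L_2}\le|\psi|_{\Op B_+}$ and $\psi\mapsto\Op L\psi$ extends by continuity to a contraction on $\Spc H_{\Op L}$. To identify the extension, I will write $\Op L\psi=\widetilde{\Op L}\pi\psi$ on $\Spc S$. The map $\pi:\Spc H_{\Op L}\to\dot{\Spc H}_{\Op B}$ is bounded by Corollary~\ref{Cor:PhiIndependence}, Item~3, where it is a quotient isometry. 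Both sides of \eqref{Eq:LExtension} are therefore continuous and agree on the dense set $\Spc S$. Surjectivity follows because $\pi$ maps onto $\dot{\Spc H}_{\Op B}$ by \eqref{Eq:CanonicalHprime} and $\widetilde{\Op L}$ maps onto $L_2$ (Corollary~\ref{Cor:L2RealizationConditional}). The kernel is $\ker\pi=\Spc N_{\V p}$. The norm identity extends from $\Spc S$ by density, since $\V\phi(\cdot)$ is continuous on $\Spc H_{\Op L}\embedC\Spc S'$. Independence of $\V\phi$ is Corollary~\ref{Cor:PhiIndependence}.

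For Item~2, let $\Spc X$ be given and let $f\in\Spc X$ be the limit in $\Spc X$ of $\psi_k\in\Spc S$. Boundedness of $\Op L_{\Spc X}$ makes $(\Op L\psi_k)$ Cauchy in $L_2$, so $(\pi\psi_k)$ is Cauchy in $\dot{\Spc H}_{\Op B}$; its abstract limit $g$ has a realization in $\Spc S'/\Spc N_{\V p}$ (Proposition~\ref{Prop:Realization}, Item~3). Since $\psi_k\to f$ in $\Spc S'$ through $\Spc X\embedC\Spc S'$, the weak-$\ast$ continuity of both realizations gives $\pi f=g$. Hence $f\in\pi^{-1}(\dot{\Spc H}_{\Op B})=\Spc H_{\Op L}$ and $\Op L_{\Spc X}f=\widetilde{\Op L}\pi f$. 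The same computation gives $|f|_{\Op B_+}^2=\|\Op L_{\Spc X}f\|^2+|\V\phi(f)|^2\le C\|f\|_{\Spc X}^2$, because $\V\phi$ is continuous on $\Spc X$. The inclusion $\Spc X\hookrightarrow\Spc H_{\Op L}$ is therefore bounded, and it is injective since both spaces sit in $\Spc S'$.

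The main obstacle is the reverse inclusion $\Spc H_{\Op L}\subseteq\Spc X$. I plan to get it from surjectivity and the null space. First I show $\ker\Op L_{\Spc X}=\Spc N_{\V p}$: the inclusion $\subseteq$ follows from $\ker\Op L=\Spc N_{\V p}$ on $\Spc H_{\Op L}$. For $\supseteq$, I argue that $\ker\Op L_{\Spc X}$ is closed and contained in the finite-dimensional $\Spc N_{\V p}$. If it were smaller, then $\Op L_{\Spc X}$, being a bounded surjection onto $L_2$ with kernel $K\subsetneq\Spc N_{\V p}$, would induce an isomorphism $\Spc X/K\cong L_2$. But $\Spc X/K$ injects into $\Spc H_{\Op L}/K$, and $\Spc H_{\Op L}/K$ maps onto $L_2$ through $\widetilde{\Op L}\pi$ with nonzero kernel $\Spc N_{\V p}/K$. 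Hence some $p\in\Spc N_{\V p}\setminus\Spc X$ would have $\Op L p=0=\Op L_{\Spc X}x$ for some $x\in\Spc X$, with $p-x\in\Spc N_{\V p}$. To reach a contradiction I would need $\Spc N_{\V p}\subseteq\Spc X$. For this I use the attainability sequences $\chi^{(n)}_k$ and the dual: $\Spc S$ is dense in $\Spc X'$ and $\Spc X'\subseteq\Spc H$ by the dual argument. If that is too delicate, I will fall back on a dimension count via the open mapping theorem.

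Given $\Spc N_{\V p}\subseteq\Spc X$, any $f\in\Spc H_{\Op L}$ has some $x\in\Spc X$ with $\Op L_{\Spc X}x=\Op Lf$. Then $f-x\in\Spc N_{\V p}\subseteq\Spc X$, so the sets coincide, and the open mapping theorem yields equivalence of norms. For Item~3, I take the orthogonal decomposition $\Spc X=\Spc N_{\V p}^\perp\oplus\Spc N_{\V p}$. The isometry condition makes $\Op L$ unitary from $\Spc N_{\V p}^\perp$ onto $L_2$. I choose an orthonormal basis $\tilde{\V p}$ of $\Spc N_{\V p}$ in $\Spc X$ and let $\tilde\phi_n\in\Spc X'$ be the Riesz representers of the coordinate functionals of the orthogonal projection onto $\Spc N_{\V p}$. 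These vanish on $\Spc N_{\V p}^\perp$ and are biorthogonal to $\tilde{\V p}$, which gives \eqref{Eq:AugmentedNormGeneral} and the unitary map. Conversely, \eqref{Eq:AugmentedNormGeneral} shows that $\tilde{\V p}$ is orthonormal and that $\Spc N_{\V p}^\perp=\ker\tilde{\V\phi}$, on which the norm reduces to $\|\Op L\cdot\|$. The final claim for $\tilde{\V\phi}\subset\Spc S$ is Item~2 combined with \eqref{Eq:NativeSpace}.
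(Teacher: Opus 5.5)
\textbf{There is a genuine gap, in the reverse inclusion of Item 2.} The rest is sound. Your Item 1 is correct, and so is your proof of the inclusion $\Spc X\subseteq\Spc H_{\Op L}$ with $\Op L_{\Spc X}=\widetilde{\Op L}\pi$. Your direct bound $|f|_{\Op B_+}\le C\|f\|_{\Spc X}$ is a valid substitute for the paper's closed-graph argument. Your Item 3 is the paper's argument, except that it relies on Item 2.

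The failing step is the one you flag yourself, $\Spc H_{\Op L}\subseteq\Spc X$. Your attempt to prove $\Spc N_{\V p}\subseteq\Spc X$ first is circular: the quotient manipulation ends with ``to reach a contradiction I would need $\Spc N_{\V p}\subseteq\Spc X$'', which is the claim itself. Your suggested remedy is circular too. Once $\Spc X\subseteq\Spc H_{\Op L}$ continuously with $\Spc S$ dense, you get $\Spc H\subseteq\Spc X'$ for free. The reverse inclusion $\Spc X'\subseteq\Spc H$ is simply the dual restatement of $\Spc H_{\Op L}\subseteq\Spc X$.

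The algebraic facts you list are surjectivity of $\Op L_{\Spc X}$ and $\ker\Op L_{\Spc X}\subseteq\Spc N_{\V p}$, hence $\Spc H_{\Op L}=\Spc X+\Spc N_{\V p}$. They only show that $\Spc X$ has finite codimension in $\Spc H_{\Op L}$, so they cannot rule out a proper subspace. What is missing is a topological argument. You need to show that $\Spc X$ is closed in $\Spc H_{\Op L}$, so that the density of $\Spc S\subseteq\Spc X$ in $\Spc H_{\Op L}$ can be used.

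\textbf{How the paper closes it.}
\begin{enumerate}
\item The open-mapping theorem gives a bounded right inverse $\Op R:L_2\to\Spc X$ of $\Op L_{\Spc X}$, so $\Spc X=\Op R(L_2)+(\Spc N_{\V p}\cap\Spc X)$.
\item Your bound gives $|\Op Rw|_{\Op B_+}\le C\|w\|_{L_2}$, and Item 1 gives $\|w\|_{L_2}=\|\Op L\Op Rw\|_{L_2}\le|\Op Rw|_{\Op B_+}$. Hence $\Op R(L_2)$ is closed in $\Spc H_{\Op L}$, and so is its sum with a finite-dimensional space.
\item So $\Spc X$ is a closed subspace of $\Spc H_{\Op L}$ containing the dense subspace $\Spc S$, and therefore $\Spc X=\Spc H_{\Op L}$.
\end{enumerate}
In this route, $\Spc N_{\V p}\subseteq\Spc X$ comes out as a consequence, not a prerequisite.

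\textbf{Your fallback can also work, but only through duality.} Let $K=\ker\Op L_{\Spc X}$.
\begin{enumerate}
\item $\Op L_{\Spc X}^\ast$ has closed range, equal to the annihilator of $K$ in $\Spc X'$, which has codimension $\dim K$.
\item $\Spc S$ is dense in $\Spc X'$, and $\Spc S\cap\Op L^\ast(L_2)=\Spc S_{\V p}$ by Conditions 2--3. So the image of $\Spc S$ fills the finite-dimensional quotient, and that codimension equals $N_0$.
\item This forces $K=\Spc N_{\V p}$, after which your last paragraph goes through.
\end{enumerate}
As written, however, neither argument appears in your proposal, and Item 3 inherits the gap because it uses $\Spc N_{\V p}\subseteq\Spc X$.
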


\begin{proof}
\emph{Extension of $\Op L$}:
For $\psi\in\Spc S$, we have that $\|\Op L\psi\|_{L_2}=|\psi|_{\Op B}=\|\pi\psi\|_{\dot{\Spc H}_{\Op B}}\le|\psi|_{\Op B_+}$, so that $\psi\mapsto\Op L\psi$ extends by continuity to $\Spc H_{\Op L}$. The operator $\widetilde{\Op L}\pi$ is bounded on $\Spc H_{\Op L}$, because Item 3 of Theorem~\ref{Thm:Augmentation} gives $\|\pi f\|_{\dot{\Spc H}_{\Op B}}=\|\Op P_{\V \phi}^\ast f\|_{\Spc H'_{\Op A}}\le|f|_{\Op B_+}$, and it coincides with $\Op L$ on $\Spc S$. The two bounded operators therefore agree on the dense subspace $\Spc S$, which proves \eqref{Eq:LExtension}. The map $\pi$ is onto $\dot{\Spc H}_{\Op B}$ by \eqref{Eq:CanonicalHprime} and $\widetilde{\Op L}$ is onto $L_2$ by Condition 2 of Theorem~\ref{Thm:PracticalChecklist}, so that the extension is surjective. Since $\widetilde{\Op L}$ is injective, its null space is $\ker\pi\cap\Spc H_{\Op L}=\Spc N_{\V p}$. The norm identity is Item 3 of Theorem~\ref{Thm:Augmentation}, in which we substitute $\|\Op P_{\V \phi}^\ast f\|_{\Spc H'_{\Op A}}=\|\pi f\|_{\dot{\Spc H}_{\Op B}}=\|\widetilde{\Op L}\pi f\|_{L_2}$. The last assertion is Corollary~\ref{Cor:PhiIndependence}.

\medskip
\emph{Inclusion $\Spc X\subseteq\Spc H_{\Op L}$}:
Let $f\in\Spc X$ and let $(\psi_k)\subset\Spc S$ converge to $f$ in $\Spc X$. Then, $\Op L\psi_k\to\Op L_{\Spc X}f$ in $L_2$, so that $(\psi_k)$ is $|\cdot|_{\Op B}$-Cauchy and $\pi\psi_k$ converges to some $g\in\dot{\Spc H}_{\Op B}$ with $\widetilde{\Op L}g=\Op L_{\Spc X}f$. By Item 3 of Proposition~\ref{Prop:Realization}, this convergence also takes place in $\Spc S'/\Spc N_{\V p}$. On the other hand, $\psi_k\to f$ in $\Spc S'$ because $\Spc X$ is continuously embedded in $\Spc S'$, so that $\pi\psi_k\to\pi f$ in $\Spc S'/\Spc N_{\V p}$. Hence $\pi f=g\in\dot{\Spc H}_{\Op B}$, which places $f$ in $\Spc H_{\Op L}$ by \eqref{Eq:CanonicalHprime}, and $\Op L_{\Spc X}f=\widetilde{\Op L}\pi f=\Op L f$. The inclusion map $\Spc X\to\Spc H_{\Op L}$ is continuous: it is a linear map between two Hilbert spaces that are both continuously embedded in the Hausdorff space $\Spc S'$, so that its graph is closed, and the closed-graph theorem applies.

\medskip
\emph{Equality $\Spc X=\Spc H_{\Op L}$}:
Since $\Op L_{\Spc X}:\Spc X\to L_2$ is bounded and surjective, its restriction to the orthogonal complement of $\ker\Op L_{\Spc X}$ is a bounded bijection onto $L_2$. By the open-mapping theorem, the inverse of this restriction is a bounded operator $\Op R:L_2\to\Spc X$ with $\Op L_{\Spc X}\Op R=\Identity$. For $f\in\Spc X$, we have that $f-\Op R\Op L_{\Spc X}f\in\ker\Op L_{\Spc X}=\Spc N_{\V p}\cap\Spc X$, so that
\begin{align*}
\Spc X=\Op R(L_2)+(\Spc N_{\V p}\cap\Spc X).
\end{align*}
The range $\Op R(L_2)$ is closed in $\Spc H_{\Op L}$. Indeed, the previous step gives the bound $|\Op Rw|_{\Op B_+}\le C\|\Op Rw\|_{\Spc X}\le C\,\|\Op R\|\,\|w\|_{L_2}$, while Item 1 gives $\|w\|_{L_2}=\|\Op L\Op Rw\|_{L_2}\le|\Op Rw|_{\Op B_+}$. Hence, $\Op R$ is an isomorphism from $L_2$ onto its range in $\Spc H_{\Op L}$. The sum of a closed subspace and of a finite-dimensional subspace is closed. Consequently, $\Spc X$ is a closed subspace of $\Spc H_{\Op L}$ that contains $\Spc S$, which is dense in $\Spc H_{\Op L}$. We conclude that $\Spc X=\Spc H_{\Op L}$. The identity map from $(\Spc H_{\Op L},\|\cdot\|_{\Spc X})$ onto $(\Spc H_{\Op L},|\cdot|_{\Op B_+})$ is a continuous bijection between Hilbert spaces, so that its inverse is continuous as well, which is the equivalence of norms.

\medskip
\emph{Characterization of the norms}:
Let us first suppose that $\|f\|_{\Spc X}=\|\Op L f\|_{L_2}$ on $\Spc N_{\V p}^\perp$. By Item 2, $\Spc N_{\V p}$ is a subspace of $\Spc X$ of dimension $N_0$; let $\Op Q$ be the orthogonal projector onto it and let $\tilde{\V p}=(\tilde p_1,\dots,\tilde p_{N_0})$ be an orthonormal basis of $\Spc N_{\V p}$ in $\Spc X$. The Riesz representatives $\tilde\phi_n\eqdef\langle\cdot,\tilde p_n\rangle_{\Spc X}$ belong to $\Spc X'$ and satisfy $\langle\tilde p_m,\tilde\phi_n\rangle=\langle\tilde p_m,\tilde p_n\rangle_{\Spc X}=\delta_{mn}$, while $\Op Qf=\sum_n\langle f,\tilde\phi_n\rangle\tilde p_n$. Since $\Op L$ annihilates $\Spc N_{\V p}$, we obtain that
\begin{align*}
\|f\|_{\Spc X}^2=\|f-\Op Qf\|_{\Spc X}^2+\|\Op Qf\|_{\Spc X}^2=\|\Op L f\|_{L_2}^2+|\tilde{\V \phi}(f)|_2^2,
\end{align*}
which is \eqref{Eq:AugmentedNormGeneral}. Conversely, let \eqref{Eq:AugmentedNormGeneral} hold. By polarization, the inner product of $\Spc X$ is $\langle f,g\rangle_{\Spc X}=\langle\Op L f,\Op L g\rangle_{L_2}+\tilde{\V \phi}(f)^\Tr\tilde{\V \phi}(g)$. Since $\Op L\tilde p_m=0$ and by biorthogonality, $\langle f,\tilde p_m\rangle_{\Spc X}=\langle f,\tilde\phi_m\rangle$, so that $\Spc N_{\V p}^\perp=\bigcap_n\ker\tilde\phi_n$, on which \eqref{Eq:AugmentedNormGeneral} reduces to $\|f\|_{\Spc X}=\|\Op L f\|_{L_2}$. The unitarity of $f\mapsto(\Op L f,\tilde{\V \phi}(f))$ is \eqref{Eq:AugmentedNormGeneral} together with the surjectivity of $\Op L$ and the fact that $\tilde{\V \phi}$ takes every value in $\R^{N_0}$ on $\Spc N_{\V p}$. Finally, when $\tilde{\V \phi}\subset\Spc S$, the right-hand side of \eqref{Eq:AugmentedNormGeneral} is the norm $|\cdot|_{\Op B_+}$ of \eqref{Eq:NativeSpace} for the pair $(\tilde{\V p},\tilde{\V \phi})$, and $\Spc X$ is its completion because $\Spc S$ is dense in $\Spc X$.
\end{proof}

Item 2 concerns the set only: any Hilbertian norm on $\Spc H_{\Op L}$ that is equivalent to $|\cdot|_{\Op B_+}$---for instance, $\|\Op L f\|_{L_2}^2+\int|f|^2w$ for a weight $w$ that makes the second term bounded on $\Spc H_{\Op L}$ and positive-definite on $\Spc N_{\V p}$---defines another Hilbert--Gelfand structure on the same set, with the same dual set and another Riesz pair. Item 3 singles out the norms for which $\Op L$ is a partial isometry, that is, an isometry from $\Spc N_{\V p}^\perp$ onto $L_2$; these are the norms \eqref{Eq:NativeSpace}, with the proviso that the family $\tilde{\V \phi}$ may need to be taken in the dual $\Spc X'$ rather than in $\Spc S$. The classical norm $\|f'\|_{L_2}^2+|f(0)|^2$ of Example~\ref{Ex:LinearSpline} is of this kind, with $\tilde\phi_1=\delta$.

\section{Examples of Native Spaces}
\label{Sec:Examples}
We now instantiate the construction for the two primary families of operators of spline theory:
the fractional Laplacians, which cover the polyharmonic and thin-plate settings, and the ordinary
differential operators with constant coefficients, which cover the exponential splines. We first
summarize the workflow.

\subsection{Roadmap for the Construction of Native Spaces}
\label{Sec:Roadmap}
We summarize the workflow that leads from a candidate operator $\Op L$ to $\Spc H_{\Op L}$.

\medskip
\emph{Step 1 (Context)}:  One selects a nuclear Gelfand pair $(\Spc S,\Spc S')$ that is adapted
to the task---for instance, $(\Spc S(\R^d),\Spc S'(\R^d))$ for the interpolation of scattered
data---together with a candidate regularization operator $\Op L:\Spc S\to L_2$. This yields the
normal operator $\Op B=\Op L^\ast\Op L$ and the dual energy $Q$ of
Theorem~\ref{Thm:PracticalChecklist}.

\medskip
\emph{Step 2 (Admissibility)}:  One verifies Conditions 1--4 of
Theorem~\ref{Thm:PracticalChecklist}. The finiteness domain of $Q$ dictates the conditioning:
$\V p$ spans the attainable null space $\Spc N_{\Op B}=\Spc N_{\V p}$, which is the trend space
of the theory. For a Fourier multiplier whose symbol meets (i)--(iii) of
Proposition~\ref{Prop:MultiplierConstruction}, the four conditions hold automatically, with
$\V p$ a basis of $\Pi_{k_\gamma}$. When a Green inverse of $\Op B$ is available,
Corollary~\ref{Cor:GreenNormal} offers a shorter route.

\medskip
\emph{Step 3 (Conditioning)}:  One selects a family $\V \phi\in\Spc S^{N_0}$ that is
biorthogonal to $\V p$, whose existence is granted by Lemma~\ref{Lem:Biorthogonal}. It
specifies the rank-$N_0$ operators $\Op R_{\V p}$ and $\Op R_{\V \phi}$, the projectors
$\Op P_{\V \phi}$ and $\Op P_{\V \phi}^\ast$, and the splittings
$\Spc S=\Spc S_{\V p}\oplus\Spc N_{\V \phi}$ and $\Spc S'=\Spc S'_{\V \phi}\oplus\Spc N_{\V p}$.

\medskip
\emph{Step 4 (Green's operator)}:  One determines $\Op G=(\Op L^\ast)^{-1}$ of
Corollary~\ref{Cor:GreenFactor}; in the shift-invariant case, $\Op G$ is the convolution with
$\rho_{\Op L^\ast}=\Spc F^{-1}\{1/\overline{\hat L}\}$. This provides the representer map
$\Op L^{-1\ast}_{\V \phi}=\Op G\Op P_{\V \phi}$ and the conditioned Gram operator
$\Op A_{\V \phi}=(\Op G\Op P_{\V \phi})^\ast(\Op G\Op P_{\V \phi})$, in exact duality with the
factorization $\Op B=\Op L^\ast\Op L$.

\medskip
\emph{Step 5 (Native space)}:  The augmentation of Theorem~\ref{Thm:Augmentation} adjoins the
finite-dimensional blocks and produces the orthogonal direct sums
$\Spc H=\Spc H_{\Op A}\oplus\Spc N_{\V \phi}$ and
$\Spc H_{\Op L}=\Spc H'=\Spc H'_{\Op A}\oplus\Spc N_{\V p}$, which are mutually dual
Hilbert--Gelfand spaces. The native space
$\Spc H_{\Op L}$ that is specified by \eqref{Eq:NativeSpace} then has the three
properties that we were after: it contains the trend space $\Spc N_{\V p}$; it carries
a well-defined energy $\|\Op L f\|_{L_2}^2$; and it makes the minimum-energy
reconstruction problem well posed. Moreover, the outcome does not depend on the choices made in
Steps 2--3 (the basis $\V p$ of the attainable null space and the family $\V\phi$) other than
through equivalent norms (Proposition~\ref{Prop:NativeUniqueness}).

\subsection{Native Spaces Induced by Fractional Laplacians}

\begin{example}[$\Op L=(-\Delta)^{\gamma/2}$ on $\Spc S(\R^d)$]
\label{Ex:FracLaplacian}
Let $(\Spc S,\Spc S')=(\Spc S(\R^d),\Spc S'(\R^d))$ and let
$(-\Delta)^{\gamma/2}$ be the fractional Laplacian of order $\gamma>0$, which is the self-adjoint
Fourier-multiplier operator specified by
\begin{align*}
\widehat{(-\Delta)^{\gamma/2}\varphi}(\V\omega)
=\|\V\omega\|^{\gamma}\,\hat\varphi(\V\omega),
\end{align*}
where $\hat\varphi(\V\omega)=\int_{\R^d}\varphi(\V x)\,\ee^{-\jj\V\omega^\Tr\V x}\,\dint\V x$ and
$\|\cdot\|$ denotes the Euclidean norm. We set
\begin{align*}
\Op B
=\big((-\Delta)^{\gamma/2}\big)^{\ast}(-\Delta)^{\gamma/2}
=(-\Delta)^{\gamma},
\qquad
\widehat{\Op B\varphi}(\V\omega)=\|\V\omega\|^{2\gamma}\,\hat\varphi(\V\omega).
\end{align*}
Unless $\gamma$ is an even integer, the symbol $\|\V\omega\|^{\gamma}$ is not smooth at the
origin, so that $(-\Delta)^{\gamma/2}$ does not map $\Spc S(\R^d)$ into itself; the operator
$\Op B:\Spc S\to\Spc S'$ is nevertheless continuous, which is all that the framework requires.
It is symmetric and positive-definite, because
\begin{align*}
\langle\Op B\varphi,\varphi\rangle
=\|(-\Delta)^{\gamma/2}\varphi\|_{L_2}^2
=\frac{1}{(2\pi)^d}\int_{\R^d}\|\V\omega\|^{2\gamma}\,|\hat\varphi(\V\omega)|^2\,\dint\V\omega
>0
\end{align*}
for $\varphi\neq0$, with the induced norm
$|\varphi|_{\Op B}=\|(-\Delta)^{\gamma/2}\varphi\|_{L_2}$. We write $\Pi_k$ for the space of
polynomials of degree at most $k$ on $\R^d$, with the convention that $\Pi_k=\{0\}$ for $k<0$. The
linear-spline setting of Example~\ref{Ex:LinearSpline} is the case $(\gamma,d)=(1,1)$; the value
$\gamma=2$, for which $(-\Delta)^{\gamma/2}=-\Delta$ and $\Op B=\Delta^2$, leads to the thin-plate
setting that we work out at the end of this example.

\medskip
\emph{Distributional null spaces versus the attainable null space}:
Over $\Spc S'$, the operator $\Op B$ annihilates every polynomial of degree less than $2\gamma$ and,
for integer $\gamma$ and $d\ge2$, the infinite-dimensional space of $\gamma$-polyharmonic polynomials.
None of these spaces can serve as $\Spc N_{\V p}$: the object that the theory calls for is the
attainable null space $\Spc N_{\Op B}\subseteq\ker(\Op B:\Spc S'\to\Spc S')$ of
Definition~\ref{Def:AttainableNull}.

\medskip
\emph{Admissibility and the attainable null space}:
The symbol $\|\V\omega\|^{\gamma}$ satisfies conditions (i)--(iii) of
Proposition~\ref{Prop:MultiplierConstruction} with order $\gamma$. Consequently, Conditions
1--4 of Theorem~\ref{Thm:PracticalChecklist} hold with $\V p$ a basis of $\Pi_{k_\gamma}$
(e.g., the monomials of degree at most $k_\gamma$), and the theorem identifies the attainable
null space as
\begin{align*}
\Spc N_{\Op B}=\Spc N_{\V p}=\Pi_{k_\gamma},
\qquad
k_\gamma\eqdef\Big\lfloor\gamma-\frac d2\Big\rfloor,
\end{align*}
so that $\Spc N_{\Op B}$ is nontrivial only for $\gamma\ge d/2$, with
$N_0=\dim\Pi_{k_\gamma}=\binom{k_\gamma+d}{d}$ and
\begin{align*}
\Spc S_{\V p}
=\{\varphi\in\Spc S(\R^d):\
\text{the moments of $\varphi$ of order at most $k_\gamma$ vanish}\}.
\end{align*}
The dual energy is the explicit integral
\begin{align*}
Q(\varphi)
=\frac{1}{(2\pi)^d}\int_{\R^d}\frac{|\hat\varphi(\V\omega)|^2}{\|\V\omega\|^{2\gamma}}\,\dint\V\omega,
\end{align*}
whose finiteness domain is decided by the Taylor expansion of $\hat\varphi$ at the origin (see
the proof of Proposition~\ref{Prop:MultiplierConstruction}).

\medskip
\emph{Attainability made explicit}:
It is instructive to exhibit the annihilation of $\Pi_{k_\gamma}$ directly, without the
abstract cutoff construction of Theorem~\ref{Thm:PracticalChecklist}. Let
$\theta\in\Spc S(\R^d)$ be equal to $1$ in a neighborhood of the origin and consider, for a
monomial $\V x^{\V\alpha}$ with $|\V\alpha|=j\le k_\gamma$, the dilated products
$\varphi_t(\V x)=\V x^{\V\alpha}\,\theta(\V x/t)$, whose Fourier transforms are
$\hat\varphi_t(\V\omega)=\jj^{\,j}t^{d+j}h_\V\alpha(t\V\omega)$ with
$h_{\V \alpha}\eqdef\Op D^{\V\alpha}\hat\theta\in\Spc S(\R^d)$. A change of variables gives
\begin{align*}
|\varphi_t|_{\Op B}^2
=
t^{\,d+2j-2\gamma}\,\frac{1}{(2\pi)^d}\int_{\R^d}\|\V u\|^{2\gamma}|h_\V\alpha(\V u)|^2\,\dint\V u .
\end{align*}
If $j<\gamma-\frac d2$, then $|\varphi_t|_{\Op B}\to0$ as $t\to\infty$, while
$\varphi_t(\V x)\to\V x^{\V\alpha}$ in $\Spc S'$; hence $\V x^{\V\alpha}\in\Spc N_{\Op B}$. In the
borderline case $j=\gamma-\frac d2$, which occurs when $\gamma-\frac d2$ is a nonnegative
integer (so that $j=k_\gamma$), the dilations must be averaged logarithmically: the sequence
$\varphi_K(\V x)\eqdef(\ln K)^{-1}\int_1^K\V x^{\V\alpha}\,\theta(\V x/t)\,t^{-1}\,\dint t$
converges to $\V x^{\V\alpha}$ in $\Spc S'$, and a Cauchy--Schwarz estimate of the
cross-terms gives $|\varphi_K|_{\Op B}^2\lesssim(\ln K)^{-1}\to0$, because $2\gamma=d+2j$.

\medskip
\emph{The conditional inverse and the order of conditional positivity}:
There is no need to pick an explicit biorthogonal family: its existence is granted by
Lemma~\ref{Lem:Biorthogonal} (a concrete choice consists of Hermite functions; that is, of
suitable linear combinations of the derivatives of a normalized Gaussian). The canonical
fundamental solution of $\Op B$ is the radial kernel
\begin{align*}
g_\gamma(\V x)
=
\begin{cases}
c_{\gamma,d}\,\|\V x\|^{2\gamma-d}, & \gamma-\frac d2\notin\mathbb{Z}_{\ge0},\\[4pt]
c_{\gamma,d}\,\|\V x\|^{2\gamma-d}\ln\|\V x\|, & \gamma-\frac d2\in\mathbb{Z}_{\ge0},
\end{cases}
\end{align*}
with known normalization constants $c_{\gamma,d}$ \cite{Samko1993,Wendland2005}; its symbol is $\|\V\omega\|^{-2\gamma}$,
and we note that the logarithmic branch occurs exactly when the borderline degree
$k_\gamma=\gamma-\frac d2$ is present in $\Spc N_{\Op B}$. We set
$\Op A=g_\gamma\ast\cdot$, which is a two-sided Green inverse of $\Op B$, with
$\Op A\Op B=\Identity$ on $\Spc S(\R^d)$ and $\Op B\Op A=\Identity$ on $\Spc S_{\V p}$
through the Hilbertian extensions. By Corollary~\ref{Cor:GreenNormal}, $(\Op A,\Op B)$ is a
$\V p$-conditional Riesz pair with the \emph{minimal} conditioning
(Theorem~\ref{Thm:AttainableNull}); for $\varphi\in\Spc S_{\V p}$, the Parseval relation
identifies its quadratic form with the dual energy,
$\langle\Op A\varphi,\varphi\rangle=Q(\varphi)$, and
$\Op A_{\V \phi}=\Op P_{\V \phi}^\ast\Op A\Op P_{\V \phi}$ is admissible (Corollary~\ref{Cor:GreenFactor}). In
the classical terminology of radial basis functions, $k_\gamma+1$ is the order of
conditional positive-definiteness of the kernel $g_\gamma(\V x-\V y)$
\cite{Madych1990c,Wendland2005}.
The ambiguity of the fundamental solution is the trace of the distributional kernel of
$\Op B$: additive polynomial corrections of degree at most $2k_\gamma+1$ are harmless, because
they map $\Spc S_{\V p}$ into $\Spc N_{\Op B}$ and alter neither the class $\pi\Op A\varphi$ nor the
quadratic form, whereas the higher-degree corrections of the polyharmonic case are excluded by the
isometry requirement of Definition~\ref{Def:Intrinsic}. Finally, the affine conditioning $\V p=(1,x_1,\dots,x_d)$ with
$N_0=d+1$ corresponds to $k_\gamma=1$, that is, to $1\le\gamma-\frac d2<2$; for the Laplacian
energy $\gamma=2$, this selects $d\in\{1,2\}$, whereas $\Spc N_{\Op B}$ shrinks to $\Pi_0$ for
$d\in\{3,4\}$ and to $\{0\}$ for $d\ge5$. This differs from the classical Beppo--Levi setting \cite{Deny-Lions1954},
where the seminorm kernel is $\Pi_1$ in every dimension: the Beppo--Levi space is strictly larger
than the completion of $\Spc S$ in high dimensions, and the difference is precisely the polynomial
directions that are not attainable from $\Spc S$.

\medskip
\emph{Thin-plate setting ($\gamma=2$, $d=2$)}:
Here, $\Op B=\Delta^2$, $k_\gamma=1$, and we take $\V p=(1,x_1,x_2)$ with $N_0=3$, so that
$\Spc S_{\V p}$ consists of the test functions whose Fourier transform vanishes to second
order at the origin. The fundamental solution is the thin-plate function of Duchon
\cite{Duchon1977},
$g_2(\V x)=\tfrac{1}{8\pi}\|\V x\|^2\ln\|\V x\|$, which instantiates the logarithmic
branch ($\gamma-\frac d2=1\in\mathbb{Z}_{\ge0}$), and the quadratic form on
$\Spc S_{\V p}$ is the classical thin-plate kernel form
\begin{align*}
\langle\Op A\varphi,\varphi\rangle
=\frac{1}{8\pi}\iint_{\R^2\times\R^2}
\|\V x-\V y\|^2\ln\|\V x-\V y\|\,\varphi(\V x)\varphi(\V y)\,\dint\V x\,\dint\V y,
\end{align*}
with conditional order $k_\gamma+1=2$. The space
$\dot{\Spc H}_{\Op B}=\dot H^{2}(\R^2)$ is the Beppo--Levi space of the thin-plate energy,
whose elements are functions modulo affine polynomials, while $\Spc H_{\Op A}$ consists of
the tempered distributions $u$ with $\hat u/\|\V\omega\|^{2}\in L_2(\R^2)$, which
annihilate $\Pi_1$. The augmentation of Theorem~\ref{Thm:Augmentation}, with
$\Op B_+=\Delta^2+\sum_{n=1}^{3}\phi_n\langle\cdot,\phi_n\rangle$, then yields the
concrete dual pair of Hilbert--Gelfand spaces
\begin{align*}
\Spc H=\Spc H_{\Op A}\oplus\Spc N_{\V \phi},
\qquad
\Spc H'=\{f\in\Spc S'(\R^2):\Delta f\in L_2(\R^2)\}
=\Spc H'_{\Op A}\oplus\Pi_1.
\end{align*}
\end{example}

\begin{remark}[Duchon's polyharmonic splines and the Beppo--Levi space]
\label{Rem:Duchon}
The polyharmonic splines of Duchon \cite{Duchon1977} minimize $\|\Op D^m f\|^2_{L_2}$ over the
Beppo--Levi space $BL^m(\R^d)$ of Deny and Lions \cite{Deny-Lions1954}, whose seminorm has the
full algebraic kernel $\Pi_{m-1}$, in every dimension. By Example~\ref{Ex:FracLaplacian} with
$\gamma=m$, the attainable null space of the same energy is $\Spc N_{\Op B}=\Pi_{k_m}$ with
$k_m=\lfloor m-\tfrac d2\rfloor$, which coincides with $\Pi_{m-1}$ for $d\in\{1,2\}$ only. For
$d\ge3$, the polynomials of degree $k_m<j\le m-1$ have zero energy, but they are not limits of
test functions; in fact, $\Spc S$ is not dense in $BL^m(\R^d)$ for any Hilbertian norm of the
form \eqref{Eq:AugmentedNormGeneral} that fixes the polynomial part, so that $BL^m(\R^d)$ is not a
Hilbert--Gelfand space, and Item~2 of Proposition~\ref{Prop:NativeUniqueness} rules it out as
the native space. Instead, $BL^m(\R^d)=\Spc H_{\Op L}\oplus\Spc G$, where the complement $\Spc G$
is spanned by the polynomials of degree $k_m<j\le m-1$. Duchon's construction is correct within
$BL^m(\R^d)$, but the over-conditioning has practical consequences. First, it requires the data
sites for interpolation to be unisolvent for $\Pi_{m-1}$, whereas the energy only calls for $\Pi_{k_m}$: for $d=3$,
$m=2$ and the kernel $g_2(\V x)\propto\|\V x\|$, Duchon's interpolant needs four side conditions
and non-coplanar sites, while the spline of $\Spc H_{\Op L}$ exists and is unique for any set of
distinct points under the single condition $\sum_i a_i=0$. Second, the coefficients of the
polynomials in $\Spc G$ are not regularized by the energy, whatever the value of the
regularization parameter; the data alone determine them. Third, $\Spc G$ is annihilated by the
whole core $\Spc H_{\Op A}=\Op L^\ast(L_2)$, so that no functional of finite dual energy can
detect these components. The theory of radial basis functions \cite{Madych1990c,Wendland2005}
sidesteps the issue when it works with the minimal order $k_m+1$ of conditional positive
definiteness of $g_m$, which matches $\Spc N_{\Op B}$; the larger space $\Pi_{m-1}$ enters that
theory only through the Beppo--Levi spaces, where it is inherited from Duchon.
\end{remark}

\subsection{Native Spaces for Exponential Splines}

\begin{example}[$\Op L=P(\Op D)$ on $\Spc S(\R)$]
\label{Ex:ExpSplines}
Let $(\Spc S,\Spc S')=(\Spc S(\R),\Spc S'(\R))$ and let
\begin{align*}
\Op L=P(\Op D)=a_{N_{\mathrm{tot}}}\prod_{n=1}^{N_{\mathrm r}}(\Op D-\alpha_n\Identity)^{m_n},
\qquad
\Op D=\tfrac{\dint}{\dint x},
\end{align*}
where $P$ is an arbitrary polynomial with real coefficients, whose $N_{\mathrm r}$ distinct
roots $\alpha_n\in\mathbb{C}$ have multiplicities $m_n$. The total number of roots---that is,
the degree of $P$---is $N_{\mathrm{tot}}=\sum_{n=1}^{N_{\mathrm r}}m_n$. In contrast with the
fractional operators of Example~\ref{Ex:FracLaplacian}, $\Op L$ maps $\Spc S(\R)$ into itself; its
symbol is the polynomial $\hat L(\omega)=P(\jj\omega)$, and the normal operator is
\begin{align*}
\Op B=\Op L^\ast\Op L=P(-\Op D)\,P(\Op D),
\qquad
\widehat{\Op B\varphi}(\omega)=|P(\jj\omega)|^2\,\hat\varphi(\omega).
\end{align*}

\medskip
\emph{Null spaces}:
The kernel of $P(\Op D)$ over $C^\infty(\R)$ is the $N_{\mathrm{tot}}$-dimensional space of exponential
polynomials, spanned by the functions $x^k\ee^{\alpha_nx}$ with $0\le k\le m_n-1$; since the
operator is hypoelliptic, this is also its kernel over $\Spc D'(\R)$. The tempered part of
this kernel retains exactly the roots on the imaginary axis, because $x^k\ee^{\alpha_nx}$
grows exponentially when $\operatorname{Re}(\alpha_n)\neq0$. Writing $\alpha_n=\jj\omega_n$ for the
imaginary roots, we therefore set
\begin{align*}
\Spc N_{\V p}
=\ker\big(P(\Op D):\Spc S'\to\Spc S'\big)
=\mathrm{span}\{x^k\ee^{\jj\omega_nx}:\ \alpha_n=\jj\omega_n,\ 0\le k\le m_n-1\},
\end{align*}
with $N_0=\sum_{\alpha_n\in\jj\R}m_n$; a real basis is obtained from $x^k\cos(\omega_nx)$ and
$x^k\sin(\omega_nx)$, the imaginary roots of a real polynomial arising in conjugate pairs.
Correspondingly,
\begin{align*}
\Spc S_{\V p}
=\{\varphi\in\Spc S(\R):\
\hat\varphi^{(k)}(\omega_n)=0,\ 0\le k\le m_n-1,\ \alpha_n=\jj\omega_n\},
\end{align*}
because
$\hat\varphi^{(k)}(\omega_n)=(-\jj)^k\langle x^k\ee^{-\jj\omega_nx},\varphi\rangle$.

\medskip
\emph{Verification via the Green inverse}:
The symbol $P(\jj\omega)$ is a polynomial, so that it vanishes only on the finite set
$\{\omega_n\}$ of real zeros, each of order $m_n$; Conditions 1 and 2 of
Theorem~\ref{Thm:PracticalChecklist} follow. The dual energy is
\begin{align*}
Q(\varphi)
=\frac{1}{2\pi}\int_{\R}\frac{|\hat\varphi(\omega)|^2}{|P(\jj\omega)|^2}\,\dint\omega,
\end{align*}
which always converges at infinity, because
$|P(\jj\omega)|\sim|a_{N_{\mathrm{tot}}}|\,|\omega|^{N_{\mathrm{tot}}}$. Near a
real zero, $|P(\jj\omega)|^2\asymp|\omega-\omega_n|^{2m_n}$: if $\hat\varphi$ vanishes at
$\omega_n$ to order $m_n$, then the integrand is bounded there; if the leading Taylor term of
$\hat\varphi$ at $\omega_n$ has order $j\le m_n-1$, then the radial exponent is
$2j-2m_n\le-2$ and the integral diverges. Hence $\{Q<\infty\}=\Spc S_{\V p}$; we note that
the borderline exponent $-1$ is never attained, because $2j-2m_n$ is even.

This is precisely the scenario of Corollary~\ref{Cor:GreenNormal}. The Green's function
$\rho_\Op B$ of the normal operator, with symbol $|P(\jj\omega)|^{-2}$, is the classical
exponential-spline kernel (for $P(X)=X$, one recovers $\rho_\Op B(x)=-\tfrac12|x|$), and
$\Op A=\rho_\Op B\ast\cdot$ is a two-sided Green inverse of $\Op B$. Item (iii) of the corollary
holds by the computation above, so that $(\Op A,\Op B)$ is a $\V p$-conditional Riesz pair
with $\Spc N_{\Op B}=\Spc N_{\V p}$: the conditioning by the tempered kernel of $P(\Op D)$ is
attainable and minimal. Condition 4 of Theorem~\ref{Thm:PracticalChecklist} then comes for
free, by the necessity part of Theorem~\ref{Thm:NormalPair} applied to the pair just
obtained, and Corollary~\ref{Cor:L2RealizationConditional} provides the $L_2$-realizations.

\medskip
\emph{Conditional inverse and native space}:
We set $\Op A_{\V \phi}=\Op P_{\V \phi}^\ast\Op A\Op P_{\V \phi}$, where the biorthogonal family exists by
Lemma~\ref{Lem:Biorthogonal} and can be formed of Gabor--Hermite atoms
$x^k\ee^{\jj\omega_nx}G(x)$, with $G$ a normalized Gaussian window. All the conclusions of
Theorems~\ref{Thm:NormalPair} and~\ref{Thm:PracticalChecklist} and of
Proposition~\ref{Prop:Factorization} hold: the homogeneous space
$\dot{\Spc H}_{\Op B}=\{f\in\Spc S'(\R):P(\Op D)f\in L_2(\R)\}/\Spc N_{\V p}$ carries the
exponential-spline energy $\|P(\Op D)f\|_{L_2}^2$, with concrete realization
$\Spc H'_{\Op A}$, while the native space itself is the augmented space
$\Spc H_{\Op L}=\Spc H'_{\Op A}\oplus\Spc N_{\V p}$ of \eqref{Eq:NativeSpace}; and
$\Spc H_{\Op A}\cong\dot{\Spc H}_{\Op B}\cong L_2(\R)$ canonically,
with $\Op A_{\V \phi}=\Op L^{-1}_{\V \phi}\Op L^{-1\ast}_{\V \phi}$.

\medskip
\emph{Special cases}:
For $P(X)=X^m$, one recovers the polynomial-spline setting with
$\Spc N_{\V p}=\Pi_{m-1}$, in agreement with Example~\ref{Ex:FracLaplacian} for
$(\gamma,d)=(m,1)$. For $P(X)=X^2+\omega_0^2$, the conditioning is trigonometric:
$\Spc N_{\V p}=\mathrm{span}\{\cos(\omega_0x),$ $\sin(\omega_0x)\}$ with $N_0=2$. If $P$ has no
imaginary root (for instance $P(X)=X-a$ with $a\in\R\backslash\{0\}$), then $N_0=0$: the pair
is an unconditioned compatible Riesz pair, and
$\dot{\Spc H}_{\Op B}=\{f:P(\Op D)f\in L_2\}$ is a Sobolev-type space with a genuine norm.

\medskip
\emph{Two structural observations}:
First, only the imaginary-axis roots condition the pair: this is the one-dimensional
counterpart of the harmonic-polynomial discussion of Example~\ref{Ex:FracLaplacian}, with one
notable difference. Here, the attainable null space coincides with the \emph{entire} tempered
kernel, because each real zero of the symbol has order $m_n\ge1>d/2=\tfrac12$, so that all
the degrees $0\le k\le m_n-1$ pass the threshold; the gap between tempered and attainable
kernel elements is a genuinely multidimensional phenomenon. Second, the example extends
Proposition~\ref{Prop:MultiplierConstruction} to symbols whose zeros lie away from the
origin: each real zero $\omega_0$ of order $m$ contributes the modulated block
$\mathrm{span}\{x^k\ee^{\jj\omega_0x}:0\le k\le m-1\}$ to $\Spc N_{\V p}$, in place of the
polynomial block attached to a zero at the origin.
\end{example}

\appendix

\section*{Appendix: Proof of Theorem~\ref{Theo:HilbertPositiveDef}}

Throughout the appendix, all Hilbert spaces are real (the complex case is analogous), and
$\iota:\Spc S\to\Spc S'$ denotes the canonical embedding. The standing hypothesis of
Theorem~\ref{Theo:HilbertPositiveDef} is that $\Spc S$ is a dense subspace of $\Spc H$. Whenever
$\Spc H$ is continuously embedded in $\Spc S'$, we require that embedding to extend $\iota$: for
$\Spc H$ as in Item 1, we denote the two embeddings by $\jmath:\Spc S\to\Spc H$ and
$\kappa:\Spc H\to\Spc S'$, with $\kappa\circ\jmath=\iota$, and the Riesz map by
$\Op J_{\Spc H}:\Spc H\to\Spc H'$.
The pairing $\langle\varphi,\psi\rangle=\langle\iota\varphi,\psi\rangle$ is symmetric and
non-degenerate on $\Spc S\times\Spc S$. Since $\Spc S$ is a Montel space, it is barrelled and
reflexive, while the injection $\iota(\Spc S)$ is weak-$\ast$ dense in $\Spc S'$ by hypothesis.

\begin{proof}
We prove $1\Rightarrow4$, $4\Rightarrow1\wedge2$ (which contains the converse part),
$2\Rightarrow1$, and $1\Leftrightarrow3$; we then establish the compatibility of the pairings
and the uniqueness of the pair.

\medskip
\emph{Step 1 ($1\Rightarrow4$)}:
For $\varphi\in\Spc S$, the functional
$\kappa^\ast\varphi:u\mapsto\langle\kappa u,\varphi\rangle$ is continuous on $\Spc H$, so that
$\kappa^\ast\varphi\in\Spc H'$. As $\kappa^\ast\varphi\in\Spc H'$ for every $\varphi$, the family
$\{\langle\kappa u,\cdot\rangle:\|u\|_{\Spc H}\le1\}$ is pointwise bounded on $\Spc S$; since
$\Spc S$ is barrelled, the Banach--Steinhaus theorem \cite[Ch.~III]{Schaefer-Wolff1999} makes it
equicontinuous, so that
$\varphi\mapsto\|\kappa^\ast\varphi\|_{\Spc H'}$ is a continuous seminorm on $\Spc S$.
Moreover, $\kappa^\ast(\Spc S)$ is dense in $\Spc H'$, because its annihilator in $\Spc H$ is
$\ker(\kappa)=\{0\}$, and $\kappa^\ast$ is the realization of $\Spc S$ in
$\Spc H'$, so that $\|\kappa^\ast\varphi\|_{\Spc H'}=\|\varphi\|_{\Spc H'}$.

We now set $\Op A\eqdef\jmath^\ast\Op J_{\Spc H}\jmath$ and
$\Op A^{-1}\eqdef\kappa\,\Op J_{\Spc H}^{-1}\kappa^\ast$, where
$\jmath^\ast:\Spc H'\to\Spc S'$ is the adjoint of $\jmath$. Both are continuous
$\Spc S\to\Spc S'$, and, for $\varphi,\psi\in\Spc S$,
\begin{align*}
\langle\Op A\varphi,\psi\rangle
=\langle\jmath\varphi,\jmath\psi\rangle_{\Spc H},
\qquad
\langle\Op A^{-1}\varphi,\psi\rangle
=\langle\kappa^\ast\varphi,\kappa^\ast\psi\rangle_{\Spc H'}.
\end{align*}
Hence both operators are (symmetric and) positive-definite, with
$|\varphi|_{\Op A}=\|\varphi\|_{\Spc H}$ and
$|\varphi|_{\Op A^{-1}}=\|\varphi\|_{\Spc H'}$, which is the ``moreover'' identity of the
theorem. Under the identifications $\Spc S\subset\Spc H$, $\Spc S\subset\Spc H'$,
$\Spc H\subset\Spc S'$, and $\Spc H'\subset\Spc S'$, they read
$\Op A=\Op J_{\Spc H}|_{\Spc S}$ and $\Op A^{-1}=\Op J_{\Spc H}^{-1}|_{\Spc S}$. Since
$\Spc S$ is dense in $\Spc H$ and in $\Spc H'$ with matching norms, the isometric extensions
of $\Op A$ and $\Op A^{-1}$ are $\Op J_{\Spc H}$ and $\Op J_{\Spc H}^{-1}$, which are the
inverse of each other; the pair $(\Op A,\Op A^{-1})$ is therefore a compatible Riesz pair,
which is Item 4.

Finally, the identity
$\langle\varphi,\psi\rangle
=\langle\kappa\jmath\psi,\varphi\rangle
=\langle\kappa^\ast\varphi,\jmath\psi\rangle_{\Spc H'\times\Spc H}$
yields, by the Cauchy--Schwarz inequality and by density, the relations
\begin{align}
\label{Eq:CompatBound}
|\langle\varphi,\psi\rangle|
\le|\varphi|_{\Op A^{-1}}\,|\psi|_{\Op A},
\qquad\varphi,\psi\in\Spc S,
\end{align}
\begin{align}
\label{Eq:CompatSharp}
|\psi|_{\Op A}
=\sup_{\varphi\neq0}
\frac{|\langle\varphi,\psi\rangle|}{|\varphi|_{\Op A^{-1}}},
\qquad
|\varphi|_{\Op A^{-1}}
=\sup_{\psi\neq0}
\frac{|\langle\varphi,\psi\rangle|}{|\psi|_{\Op A}}.
\end{align}
The relations \eqref{Eq:CompatBound}--\eqref{Eq:CompatSharp} express, at the level of
$\Spc S$, that the extensions of $\Op A$ and $\Op A^{-1}$ are mutually inverse Riesz maps;
they constitute the operational content of Definition~\ref{Def:CompatibleRieszPair}.

\medskip
\emph{Step 2 ($4\Rightarrow1\wedge2$, and converse part)}:
Let $(\Op A,\Op A^{-1})$ be a compatible Riesz pair, so that
\eqref{Eq:CompatBound}--\eqref{Eq:CompatSharp} hold. We form the completions
$\Spc H=\overline{(\Spc S,|\cdot|_{\Op A})}$ and
$\Spc H'=\overline{(\Spc S,|\cdot|_{\Op A^{-1}})}$ with $\Spc S\embedD \Spc H$ and $\Spc S\embedD \Spc H'$ by construction.

We now write $\Spc H^\ast$ for the topological dual of $\Spc H$. Each $\psi\in\Spc S$ defines the
functional $\ell_\psi=\langle\cdot,\psi\rangle$, which is $|\cdot|_{\Op A}$-continuous by
\eqref{Eq:CompatBound}, with $\|\ell_\psi\|_{\Spc H^\ast}=|\psi|_{\Op A^{-1}}$ by
\eqref{Eq:CompatSharp}. The map $\Lambda:\psi\mapsto\ell_\psi$ thus extends to a linear isometry
$\Lambda:\Spc H'\to\Spc H^\ast$, whose image is closed because $\Spc H'$ is complete. To see that
$\Lambda$ is onto, we use the property that $\{\ell_\psi\}$ is norming for $\Spc H$ (first identity of
\eqref{Eq:CompatSharp}): if $\nu=\langle\cdot,u\rangle_{\Spc H}\in\Spc H^\ast$ is orthogonal to every
$\ell_\psi$, then $\ell_\psi(u)=\langle u,\psi\rangle=0$ for all $\psi\in\Spc S$, so that $u=0$ and
hence $\nu=0$. The image $\Lambda(\Spc H')$ is therefore dense as well as closed, hence all of
$\Spc H^\ast$; consequently $\Spc H'=\Spc H^\ast$, and the pairing
$\langle f,g\rangle_{\Spc H'\times\Spc H}$ extends the $\Spc S$--$\Spc S'$ pairing.

It remains to realize the spaces in $\Spc S'$. The inclusion $\jmath:\Spc S\to\Spc H$ is continuous
with dense range, so its transpose $\jmath^\ast:\Spc H^\ast\to\Spc S'$ is injective and
weak-$\ast$ continuous, and it extends $\iota$. Through $\Spc H'=\Spc H^\ast$, this realizes
$\Spc H'$ as a subspace of $\Spc S'$; that is, $\Spc S\embedD\Spc H'\embedC\Spc S'$. By the symmetry
of \eqref{Eq:CompatBound}--\eqref{Eq:CompatSharp} in $(\Op A,\Op A^{-1})$, the same argument gives
$\Spc S\embedD\Spc H\embedC\Spc S'$. Thus $\Spc H$ and $\Spc H'$ are Hilbert--Gelfand spaces (Items
1 and 2), mutually dual under the extended pairing.

Finally, for $\varphi,\psi\in\Spc S$,
\begin{align*}
\langle\Op J_{\Spc H}\varphi,\psi\rangle_{\Spc H'\times\Spc H}
=\langle\varphi,\psi\rangle_{\Spc H}
=\langle\Op A\varphi,\psi\rangle,
\end{align*}
so that the realization of $\Op J_{\Spc H}\varphi$ in $\Spc S'$ is $\Op A\varphi$; by density,
$\Op J_{\Spc H}$ is the unique continuous extension of $\Op A$, and $\Op J_{\Spc H}^{-1}$ that of
$\Op A^{-1}$. This proves the converse part. If, moreover, $\Spc H$ is prescribed as in Item 4,
then its norm agrees with $|\cdot|_{\Op A}$ on the dense subspace $\Spc S$, so that the identity on
$\Spc S$ extends to an isometric isomorphism onto the realization above, through which $\Spc H$
inherits the embeddings; this proves $4\Rightarrow1$.

\medskip
\emph{Step 3 ($2\Rightarrow1$)}:
Let us assume that $\Spc H'$ is a Hilbert--Gelfand space. Step 1, applied to $\Spc H'$,
provides a compatible Riesz pair $(\Op A_1,\Op A_1^{-1})$ with
$|\varphi|_{\Op A_1}=\|\varphi\|_{\Spc H'}$ and
$|\varphi|_{\Op A_1^{-1}}=\|\varphi\|_{(\Spc H')'}=\|\varphi\|_{\Spc H}$, where the last
equality uses the reflexivity of $\Spc H$. The swapped pair $(\Op A_1^{-1},\Op A_1)$ is again
a compatible Riesz pair, so that Step 2 realizes
$\overline{(\Spc S,\|\cdot\|_{\Spc H})}=\Spc H$ as a Hilbert--Gelfand space, which is Item 1.

\medskip
\emph{Step 4 ($1\Leftrightarrow3$)}:
The implication $1\Rightarrow3$ is immediate. Conversely, the inclusion $\jmath$ has a closed
graph: if $\varphi_n\to\varphi$ in $\Spc S$ and $\jmath\varphi_n\to u$ in $\Spc H$, then
$\iota\varphi_n=\kappa\jmath\varphi_n$ converges to both $\iota\varphi$ and $\kappa u$, which
implies that $u=\jmath\varphi$ by the injectivity of $\kappa$. Since $\Spc S$ is barrelled and
$\Spc H$ is a Banach space, the closed-graph theorem (in Pt\'ak's form, for barrelled domains
and $B$-complete ranges) \cite{Ptak1958} (see also \cite[Ch.~IV]{Schaefer-Wolff1999}) yields the
continuity of $\jmath$; its range is dense by the
standing hypothesis of Theorem~\ref{Theo:HilbertPositiveDef}. The range of $\kappa$ contains
$\iota(\Spc S)$ and is dense in $\Spc S'$. Hence Item 1 holds.

\medskip
\emph{Step 5 (Compatibility of the pairings)}:
For $f\in\Spc H'$, realized in $\Spc S'$ as $\jmath^\ast f$, and for $\varphi\in\Spc S$, the
definition of the adjoint gives
\begin{align*}
\langle\jmath^\ast f,\varphi\rangle
=\langle f,\jmath\varphi\rangle_{\Spc H'\times\Spc H},
\end{align*}
which is \eqref{Eq:DualityProdH}. The extension to $(f,g)\in\Spc H'\times\Spc H$ and the bound
\eqref{Eq:HDualityBound} follow from the Cauchy--Schwarz inequality for the dual pairing of
$\Spc H$.

\medskip
\emph{Step 6 (Uniqueness of the pair)}:
Let $(\Op A_2,\Op A_2^{-1})$ be another compatible Riesz pair with
$|\cdot|_{\Op A_2}=\|\cdot\|_{\Spc H}$ on $\Spc S$. By Step 2, it generates a
Hilbert--Gelfand space whose norm agrees with that of $\Spc H$ on the dense subspace
$\Spc S$; the identity map on $\Spc S$ extends to an isometric isomorphism between the two
spaces and, since both embeddings into $\Spc S'$ are continuous and the weak-$\ast$ topology
is Hausdorff, the two spaces coincide as subsets of $\Spc S'$ with equal norms. In particular,
they share the same Riesz map, and Step 2 identifies the realization of
$\Op J_{\Spc H}\varphi$ in $\Spc S'$ both as $\Op A\varphi$ and as
$\Op A_2\varphi$; hence $\Op A_2=\Op A$, the symmetry of both operators being automatic by
Remark~\ref{Rem:AutomaticSymmetry}. The same argument, applied to
$\Spc H'$, gives $(\Op A_2)^{-1}=\Op A^{-1}$. This completes the proof.
\end{proof}

\subsection*{Acknowledgment}
The author used Claude (Anthropic) as an interactive assistant during the preparation of
this paper, for proofreading, for checking and simplifying proofs. All results were verified by the author,
who takes full responsibility for the content. This research was supported by the Swiss National Science Foundation (SNSF) under Grant 200020\_219356.

\bibliographystyle{unsrtinitials}
\bibliography{SplineOps}

\end{document}